\documentclass[10pt]{amsart}
\usepackage[a4paper, bottom = 2.3cm, top= 2.5cm, left= 2.1 cm, right= 2. cm]{geometry}
\usepackage[T1]{fontenc}
\usepackage{graphicx}
\baselineskip=16pt

\usepackage{indentfirst,csquotes}
\usepackage{amssymb,amsthm,amsmath}
\usepackage{xcolor,paralist,hyperref,titlesec,fancyhdr,etoolbox}
\newtheorem{theorem}{Theorem}[]
\newtheorem{definition}[theorem]{Definition}

\newtheorem{lemma}[theorem]{Lemma}
\newtheorem{proposition}[theorem]{Proposition}

\newtheorem{remark}[theorem]{Remark}

\newtheorem*{hyp}{Hypotheses}

\numberwithin{equation}{section}
\usepackage{xfrac}
\usepackage{cleveref}
\hypersetup{
    colorlinks=true,
    linkcolor=blue,
    filecolor=magenta,      
    urlcolor=cyan,
    citecolor=green!40!black,
    pdftitle={Generalized  Doubly Parabolic Keller-Segel System with Fractional Diffusion},
    pdfauthor={Bronzi, Anne C., and De Andrade, Crystianne L.},
    pdfsubject={},
    pdfkeywords={Keller-Segel model, Chemotaxis, Parabolic-parabolic system, Mild solutions,  Anomalous diffusion, Fractional Laplacian.} 
}
\usepackage[sorting=nyt,  
    autocite=inline, 
    labelnumber=true,
    style = numeric-comp, 
    natbib=true,		    
    backend=biber,		    
    sortcites=true,		    
    maxbibnames=99,
    maxcitenames=1,
    uniquename=init,
    uniquelist=false,
    giveninits=true,  
    useprefix=true,    
    isbn=false,
    url=false,
    abbreviate=true,
    doi=true]{biblatex}
\addbibresource{references.bib}
\pagestyle{fancy} 
\fancyhf{}

\fancyhead[R]{\thepage} 
\fancyhead[C]{\MakeUppercase{Anne Bronzi and Crystianne De Andrade}} 
\setlength{\headheight}{12.0pt}
\usepackage{enumitem} 
\setlist[enumerate]{label=\arabic*.}

\RequirePackage{xspace}
\newcommand{\ie}{i.\,e.,\xspace}
\usepackage{multicol}
\usepackage{tabularx}
\usepackage{comment}

\titleformat{\section}{\normalfont\centering\MakeUppercase}{\thesection}{0.2cm}{}
\titleformat{\subsection}{\normalfont\centering\MakeUppercase}{\thesubsection}{0.2cm}{}

\hypersetup{ colorlinks=true, linkcolor=black, filecolor=black, urlcolor=black }

\begin{document}
\thispagestyle{empty}
\title{Generalized  Doubly Parabolic Keller-Segel System with Fractional Diffusion} 
\author{Anne Bronzi$^1$ and Crystianne De Andrade$^2$}

{\let\thefootnote\relax
\footnotemark 
\footnotetext{$^1$\href{mailto:acbronzi@unicamp.br}{acbronzi@unicamp.br} 
}
\footnotemark 
\footnotetext{$^2$\href{mailto:c264386@dac.unicamp.br}{c264386@dac.unicamp.br}
}
}

\begin{abstract}
The Keller-Segel model is a system of partial differential equations that describes the movement of cells or organisms in response to chemical signals, a phenomenon known as chemotaxis.    
In this study, we analyze a doubly parabolic Keller-Segel system in the whole space $\mathbb{R}^d$, $d\geq 2$, where both cellular and chemical diffusion are governed by fractional Laplacians with distinct exponents.   
This system generalizes the classical Keller-Segel model by introducing superdiffusion, a form of anomalous diffusion. This extension accounts for nonlocal diffusive effects observed in experimental settings, particularly in environments with sparse targets.   
We establish results on the local well-posedness of mild solutions for this generalized system and global well-posedness under smallness assumptions on the initial conditions in $L^p(\mathbb{R}^d)$. Furthermore, we characterize the asymptotic behavior of the solution.

\vspace{0.35cm} \noindent\textbf{key words.} Keller-Segel model, chemotaxis, parabolic-parabolic system, mild solutions,  anomalous diffusion, Fractional Laplacian. \vspace{0.35cm}

\noindent\textbf{MSC2020:} 
  35Q92,  
  92C17,  
  35K55,  
  92B99.  
\end{abstract} 

\maketitle
\bigskip



\section{Introduction}
The foundation of mathematical modeling for chemotaxis in cellular systems can be traced back to the work of Keller and Segel (1970) \citep{Keller1970Initiation} and Patlak (1953) \citep{Patlak-1953}, with the Keller-Segel model standing out as the most extensively studied one. This model is represented by a system of partial differential equations that describes the chemically driven movement of cell (or organism) density, $\rho=\rho (x,t)$, towards regions with higher concentrations of a chemical signal, known as a chemoattractant, with density $c=c(x,t)$. 
In this paper we investigate the existence, uniqueness, and properties  of nonnegative solutions for the following generalized Keller-Segel model in $\mathbb R^d$, $d \geq 2$, with nonlocal diffusion terms with exponents $\alpha \in (1,2]$ and $\beta \in (1,d]$:
\begin{equation}
    \label{eq-work}
    \left\{\begin{aligned}
    &\partial_{t} \rho =- \Lambda^{\alpha} \rho-\chi\nabla \cdot \left(\rho \nabla c\right)  & x \in \mathbb{R}^d,  & \quad t>0, \\
    &\tau \partial_{t} c =- \Lambda^{\beta} c+ \rho-\gamma c  \qquad & x \in \mathbb{R}^d,  & \quad t>0, \; 
    \end{aligned}\right.
\end{equation}
where $\chi, \gamma$, and $\tau$ are nonnegative constants. 

In this system, the right-hand side of the first equation incorporates two key components: cell diffusion and chemotactic flux. The diffusion term captures the random microscopic movements of cells that collectively result in observable macroscopic motion. The chemotactic flux, modeled as an advective term, reflects the directed movement of cells toward higher concentrations of the chemical signal, \ie $\rho$ migrates in the direction of the chemical gradient, with $\chi$ representing the chemotactic sensitivity (assumed to be $1$ for simplicity in this study). This flux drives aggregation due to the chemical's presence, a process that is mathematically interpreted as finite-time blowup.  
The second equation models the chemoattractant's behavior, encompassing its diffusion, production, and consumption (or degradation). This signal is emitted by cells, diffused throughout the environment, and degraded at a rate proportional to its local concentration, with the nonnegative constant $\gamma$ describing this degradation rate \citep{KS-Review-2020, Bournaveas-2010-one-dimensional-fractional, book-base, PDE-Models}. 
 
The diffusion on both equations is modeled by the fractional Laplacian, $\Lambda^{\alpha}=(-\Delta)^{\alpha / 2}$, which can be defined in Fourier variables as 
\begin{equation}
    \label{eq-definition-fractional-Laplacian-fourier}
    \widehat{(\Lambda^{\alpha} f)}(\xi)=|\xi|^\alpha \widehat{f}(\xi), \; \; \forall \xi \in \mathbb R^d, \text{and } \; \alpha \in(0,2],
\end{equation}
for $f$ sufficiently smooth, where the Fourier transform of $f$ is given by $\displaystyle \mathcal{F} f(\xi)=\hat{f}(\xi)=\frac{1}{(2\pi)^{d/2}}\int_{\mathbb{R}^d} f(x) \mathrm{e}^{-\mathrm{i} x \cdot \xi} \mathrm{d} x.$

The classical Keller-Segel system ($\alpha=\beta=2$), both parabolic-parabolic and its parabolic-elliptic counterpart $(\tau=0)$, has been extensively mathematically studied. 
Since the work of \citet*{Keller1970Initiation}, this model has drawn the attention of mathematicians due to its qualitative behavior such as blowup, pattern formation, stabilization, etc.  
Briefly, it is known for the classical Keller-Segel model in $\mathbb{R}^{d}$, that in one dimension, this system has a globally smooth and unique solution that remains uniformly bounded for all $t \geq 0$; 
in higher dimensions the Keller-Segel system is critical in $L^{d/2}(\mathbb{R}^d)$, meaning that a small initial condition in $L^{d/2}(\mathbb{R}^d)$ ensures global well-posedness in time, whereas a large mass leads to blow-up \cite{KS-Review-2020,Horstmann03from1970, book-base}. 
Specifically, in two dimensions, for the parabolic-elliptic Keller-Segel system ($\tau=0$) with $\gamma=0$, solutions exist globally in time if the total mass $m_0=\|\rho_0 \|_{L^1}$ satisfies $m_0 < 8 \pi/\chi$; otherwise, if $m_0 > 8 \pi/\chi$, the solution becomes unbounded in finite time $T>0$, \ie $\\|\rho(\cdot, t)\|_{L^{\infty}} \rightarrow\infty$ as $t\rightarrow T$ \citep{Perthame19}.
For $d \geq 2$, there exists a unique local-in-time mild solution $\rho \in C([0, T), L^p(\mathbb{R}^d))$ for any initial data $\rho_0 \in L^p(\mathbb{R}^d)$ with $p > d/2$. Additionally, there is a constant $C$, small enough, such that when $\left(\chi m_2(0)\right)^{(d-2)/d} \leq C \,\chi  m_0$, where $m_2(t)$ is the moment of second order, the solution blows up \citep{Biler1995, book-base}.

For the classical parabolic-parabolic Keller-Segel model in two dimensions, \citet{Mizoguchi-2012} established that when the total initial mass satisfies $m_0<8 \pi/\chi$, and the initial data $\rho_0 \in L^1(\mathbb{R}^2) \cap L^{\infty}(\mathbb{R}^2)$ and $c_0 \in L^1(\mathbb{R}^2) \cap H^1(\mathbb{R}^2)$,  the solution exists globally in time. 
In the critical case $m_0 = 8 \pi/\chi$, global existence or finite-time blowup depends on additional properties of the initial data.
\citet{Mizoguchi-2012} also pointed out that the meaning of “critical mass”, which separates global existence and blow-up, differs between the parabolic-parabolic and parabolic-elliptic systems. 
In this context, $8 \pi/\chi$ is the critical mass in the sense that all solutions with $m_0<8 \pi/\chi$ exist globally in time, while there exists a solution with $m_0>8 \pi/\chi$ that blows up in finite time. 
\citet*{Large-mass-Biler-Corrias} constructed a forward self-similar nonnegative solutions and demonstrated that, under certain conditions, such solutions can exist globally even when the total mass exceeds $8 \pi/\chi$, offering a stark contrast to the behavior observed in the parabolic-elliptic case with $\gamma=0$.
In higher dimensions ($d \geq 3$), \citet{Perthame-2006,Perthame-2008} proved global existence for $\rho_0$ and $\nabla c_0$ sufficiently small in $L^p(\mathbb{R}^{d})$, with $p > d/2$, and $L^d(\mathbb{R}^{d})$, respectively. 

\subsection{Keller-Segel model with Fractional Diffusion}

The use of a fractional diffusion ($\Lambda^{\alpha}=(-\Delta)^{\alpha / 2}$, $0<\alpha<2$) in place of
the classical one ($-\Delta=\Lambda^{2}$) in Keller-Segel equations has been supported by significant theoretical and empirical evidence since the 1990s (see \cite{Jan-2017} and the references therein). 
%
In a biological setting, organisms often adopt Lévy walk (superdiffusion) as an efficient search strategy, particularly when seeking scarce or rare resources such as chemoattractants or food. The Lévy walk significant probability for long positional jumps corresponds, in this context, to the fact that organisms maintain motion in a single direction for much longer periods than in typical random walks, as noted in \citep{Estrada-Rodriguez-2018, Escudero-2006-Fractional-model, Jan-2017}.
%
These works and the references therein provide examples of organisms exhibiting this type of behavior in chemotactic responses.

To illustrate some of the available results, we first mention some studies for the parabolic-elliptic ($\tau=0$) version of the system \eqref{eq-work} in the one-dimensional case.
\citet{Escudero-2006-Fractional-model} proved the boundedness of solutions for $1<\alpha<2$. 
Then, \citet{Bournaveas-2010-one-dimensional-fractional} studied the one-dimensional case with $\alpha \in(0,2]$.  
They proved global existence for $\alpha=1$ under the assumption of small initial data in $L^{1}\left(\mathbb{R}\right)$, and for $\alpha < 1$, showed that solution may exist globally or may blowup depending on the size of the  $L^{1/\alpha}\left(\mathbb{R}\right)$ norm of the initial data.  
Since their strategy fails when $\alpha=1$, they did not describe the solution's behavior for large initial mass in this case. 
Then, upon numerical evidence, \citet{Bournaveas-2010-one-dimensional-fractional} conjectured a blowup of solutions for certain large data. However, \citet{Burczak-2016-fractional}  proved, at least in the periodic setting, that in this critical case ($\alpha=1$) the system is global-in-time for any initial data. It is important to point out that all the previous works considered $\beta=2$. 

In higher dimensions ($d \geq 2$), \citet{Biler-fractional-9} proved,  for $\alpha \in (1,2]$ and $\beta \in (1,d]$ with $\gamma=0$, the existence and uniqueness of a local-in-time mild solution $\rho \in C\hspace{-0.1cm}\left([0, T], L^p(\mathbb{R}^{d})\right)$ for initial condition $\rho_{0} \in L^{p}(\mathbb{R}^{d})$, where
$T=T\hspace{-0.1cm}\left(\left\|\rho_0\right\|_{L^p}\hspace{-0.05cm}\right)$. In the same manner, they demonstrated a unique global-in-time mild solution $\rho \in \hspace{-0.1cm}\left([0, \infty), L^p(\mathbb{R}^{d})\right)$ for $\rho_0 \in L^{d /(\alpha+\beta-2)}(\mathbb{R}^{d})$, provided that $\left\|\rho_0\right\|_{L^{d /(\alpha+\beta-2)}}$ is small enough. 

We also mention the works of \citet*{Biler-2018-fractional} and \citet{Huang-Liu-fractional-2016} with $\alpha \in (0,2)$ and $\beta=2$ for the study of existence and nonexistence of global-in-time solutions.
In the work of \citet{Biler-2018-fractional}, it is mentioned that, when $\gamma=0$ and $\beta=2$, the parabolic-elliptic system in $d\geq 2$ classifies as 
subcritical ($\alpha \in(1,2)$) and supercritical ($\alpha \in(0,1]$) case.  
The authors highlighted that several results on local and global-in-time solutions in the subcritical case
(such as those in 
\cite[Theorem 2.2]{Biler-fractional-15}, \cite[Theorem 1.1]{Biler-fractional-16}, \cite[Theorem 2.1]{Biler-fractional-9},  \cite[Theorem 2]{Small-data-parabolic-parabolic-and-parabolic-elliptic} and \cite[Section 7]{Biler-2018-fractional}) in different functional spaces (Lebesgue, Besov, Morrey) are, in a general sense, analogous to those in which $\alpha=2$.
%
Furthermore, \citet{Biler-fractional-16} showed results on local-in-time and blow-up solutions in $d=2$ for initial data in critical Besov spaces $\dot{B}_{2, r}^{1-\alpha}(\mathbb{R}^2)$ with $r \in[1, \infty]$. 

The doubly parabolic case with fractional operators where $\alpha=\beta$ has been addressed by \citet*{Biler-fractional-16, doubly-fractional-Gan-2011, doubly-fractional-signal-dependent-2019, Xi-Wang-2018}. 
%
\citet{Biler-fractional-16} considered the model in $\mathbb{R}^2$ with $\alpha \in (1, \; 2)$ in Besov spaces and established the local existence of a unique solution.  \citet{doubly-fractional-Gan-2011} considered the model in $\mathbb{R}^d$ with $\alpha \in (1, \; 2]$ in Fourier-Herz spaces, proving local well-posedness and a global well-posedness result for a small initial data.
%
Furthermore, \citet*{doubly-fractional-signal-dependent-2019} studied a doubly parabolic model on $\mathbb{R}^d$, $d \geq 2$, with signal-dependent sensitivity and a source term, in Sobolev space, considering $\alpha \in (0, \; 2)$. They showed the existence, uniqueness, and temporal decay of the global classical solution under smallness assumptions on the initial data.
%
Similarly, \citet*{Xi-Wang-2018}, examining a doubly parabolic system with fractional Laplacian $\alpha \in (4/3, \; 2)$ and $d\leq 2$ in Sobolev space, proved the existence and the uniqueness of global classical solution under the assumption of a small enough initial data, and showed the asymptotic decay behaviors of $\rho$ and $\nabla c$. 

To the best of our knowledge, the question of global and local well-posedness for the fractional parabolic-parabolic Keller-Segel system \eqref{eq-work} has remained an open question prior to this study, even for the standard chemotactic term, $\beta=2$. 
As mentioned, the majority of existing literature focuses on the parabolic-elliptic Keller-Segel system, leaving relatively few results addressing the parabolic-parabolic case, particularly in the scenario where $\alpha=\beta$. This indicates the need for further exploration to deepen our understanding of the system's behavior in this specific setting.  

Therefore, the goal of this paper is to address this gap, for that we extend the methodology of \citet{Biler-fractional-9}, which employed the contraction mapping principle to study the parabolic-elliptic Keller-Segel system, to analyze the parabolic-parabolic case.  
We prove that, for initial conditions $\rho_0 \in L^{p} (\mathbb{R}^{d})$ and $\nabla c_0 \in L^r (\mathbb{R}^{d})$, for certain values of $p$ and $r$, solutions to system \eqref{eq-work} remain locally bounded in time. Furthermore, we establish global-in-time existence of solutions under a smallness assumption on the initial data.  

Additionally, regarding the findings in \cite{Perthame-2008} concerning local well-posedness, our results (in this case where $\alpha=\beta=2$ and $d \geq 3$) not only recover but also extend them.

\subsection{Plan of the paper} 

In \cref{sec:Preliminaries}, we introduce the type of solution to system \eqref{eq-work} we work with, introduce basic notation, and present preliminary results essential for proving the existence of solutions.

\Cref{Existence-decay-estimates} begins with the statements of our main results, along with some remarks. \Cref{subsec:Overall-idea} outlines the overall approach to proving the existence of solutions. The proofs of our main results are then provided in \cref{subsec:Local-existence,subsec:Global-existence}, focusing on local well-posedness and global well-posedness, respectively.
Furthermore, additional results on local and global well-posedness are presented at the end of \cref{subsec:Local-existence,subsec:Global-existence}, respectively.

Finally, in \Cref{Parameters-for-Lebesgue-Spaces}, we focus on the parameters defining the Lebesgue spaces used in \Cref{Theorem:local-solutions,Theorem:Global-solutions}. 
We establish the existence of these parameters and show that the constraints they satisfy are exactly the conditions necessary to apply H\"older's inequality and other key estimates introduced in \Cref{sec:Preliminaries} and \Cref{subsec:Overall-idea}, allowing us to use the Fixed Point Theorem and prove estimates for the solution.

\section{Preliminaries}
\label{sec:Preliminaries}

In this section, we define the specific type of solution under consideration, introduce notation, and establish foundational results for investigating the existence, uniqueness, and properties of the solutions to \eqref{eq-work}.

We start by fixing some notation: for $u$ and $v$, measurable functions from $\mathbb{R}^{d}$ to $\mathbb{R}^{m}$, we denote by $u*v$ the convolution of $u$ and $v$, which is the measurable function from $\mathbb{R}^{d}$ to $\mathbb{R}$ defined as 
    \begin{equation*}
        u*v(x)=\int_{\mathbb{R}^{d}} u(x-y)\cdot v(y) \mathrm{~d}y,
    \end{equation*}
where the dot stands for the inner product in $\mathbb R^m$.

The constants, which may change from line to line, will be denoted by the same letter $C$, and, unless stated otherwise, are independent of $\rho$, $c$, $x$, and $t$. In the case of dependence on a parameter, we will use the notation $C = C(\cdot)$ to emphasize the dependence of $C$ on the parameter “$\cdot$”. 

Let us rewrite equation \eqref{eq-work} in an integral form using the Duhamel's principle: 
\begin{equation}
    \label{mild-solution-rho}
    \rho(x, t)=\left(K^{\alpha}_{t}* \rho_{0}\right)(x)-\int_{0}^{t} \left(\nabla K^{\alpha}_{t-s} *\left[\rho(s) \nabla c(s)\right]\right)(x) \mathrm{~d} s
\end{equation}
and
\begin{equation}
    \label{def-eq-c} 
    \nabla c(x, t)=e^{-\frac{\gamma}{\tau} t} K^{\beta}_{\frac{t}{\tau}} * \nabla c_{0}(x)+\int_{0}^{t} \frac{1}{\tau} e^{\gamma\left(\frac{s-t}{\tau}\right)}  \nabla K^{\beta}_{\frac{t-s}{\tau}}* \rho \mathrm{~d} s,
\end{equation}
where $K^{\alpha}_{t}$ is defined as
\begin{equation}
    \label{definition-kt}
    K^{\alpha}_{t}(x)=\frac{1}{(2 \pi)^{d/2}}\int_{\mathbb{R}^{d}} \mathrm{e}^{-t|\xi|^{\alpha}}\mathrm{e}^{i x \cdot \xi}  \mathrm{d} \xi, \mbox{ for all } x\in \mathbb R^d, t>0.
\end{equation}

Let $\mathcal F^{-1}$ denote the inverse Fourier transform defined by
\begin{equation*}\mathcal{F}^{-1}f(x)=f^{\vee}(x)=\frac{1}{(2\pi)^{d/2}}\int_{\mathbb{R}^d} f(\xi) \mathrm{e}^{\mathrm{i} x \cdot \xi} \mathrm{d} \xi,
\end{equation*}
for $f$ sufficiently smooth and $x\in \mathbb R^d$. 

Notice that  \begin{equation*}
        K^{\alpha}_{t}(x)=\mathcal{F}^{-1}\left(\mathrm{e}^{-t(2\pi|\xi|)^{\alpha}}\right) 
        =t^{-\frac{d}{\alpha}} K^{\alpha}\left(\frac{x}{t^{1/\alpha}}\right),
    \end{equation*}
where the kernel $K^{\alpha}$ is  
    \begin{equation}
        \label{definition-k}
        K^{\alpha}(x)=\frac{1}{(2 \pi)^{d/2}} \int_{\mathbb{R}^{d}} \mathrm{e}^{i x \cdot \xi} \mathrm{e}^{-|\xi|^{\alpha}} \mathrm{d} \xi.
    \end{equation}

\begin{definition}[Mild solution to Keller–Segel system] \label{Definition:Mild-solution-Keller–Segel} We say that $(\rho, c)$ is a mild solution of \eqref{eq-work} with initial data $(\rho_0,\nabla c_0)$ if it satisfies the integral formulation \eqref{mild-solution-rho} and \eqref{def-eq-c}. 
\end{definition}
\begin{remark} 
    We specify $\nabla c$ in \eqref{def-eq-c} rather than $c$, since $c$ itself does not explicitly appear in the first equation of the parabolic-parabolic system \eqref{eq-work}. 
    As a result, in the existence theorem presented in \Cref{Existence-decay-estimates}, both the initial conditions and the corresponding results are formulated in terms of $\nabla c$. \label{specify-nabla-c}
\end{remark}

Next, we present important properties and estimates regarding the kernel functions $K^{\alpha}_{t}$ and $K^{\alpha}$. These are fundamental results for establishing the existence and properties of solutions to system \eqref{eq-work}. 

\begin{lemma}[\citet{Miao-2008-Well-posedness}]\label{Properties-k} 
    Consider the kernel functions $K^{\alpha}_t $ and $K^{\alpha}$. Then, 
    \begin{enumerate}[topsep=2pt, itemsep=5pt, left=14pt, labelsep=2pt,label=(\alph*)]
        \item $K^{\alpha} \in  C_0^{\infty}(\mathbb{R}^d)$; \label{Properties-k-1}
        \item $K^{\alpha}$, $ \nabla K^{\alpha} \in L^p(\mathbb{R}^d)$, for any $1 \leq p \leq \infty$; \label{norm-k-Lp}  
        \item \hypertarget{Properties-norm-k}{$K^{\alpha}_t$,} $ \nabla  K^{\alpha}_t \in L^p(\mathbb{R}^d)$, for $0<t<\infty$ and any $1 \leq p \leq \infty$;  for $1 \leq q \leq p \leq \infty$, $t>0$, and $\varphi \in L^{q}(\mathbb{R}^{d})$, we have \label{norm-k}
    \end{enumerate}
    \begin{gather}
        \left\|K^{\alpha}_t *\varphi\right\|_{L^{p}} \leq C t^{-\frac{d}{\alpha}\left(\frac{1}{q}-\frac{1}{p}\right)}\|\varphi\|_{L^{q}}, \label{estimativa-1} \\
        \left\|\nabla K^{\alpha}_t *\varphi\right\|_{L^{p}} \leq C t^{-\frac{d}{\alpha}\left(\frac{1}{q}-\frac{1}{p}\right)-\frac{1}{\alpha}}\|\varphi\|_{L^{q}}. \label{estimativa-2}
    \end{gather}
\end{lemma}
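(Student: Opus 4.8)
The plan is to derive all three parts from the Fourier representation \eqref{definition-k} of the self-similar profile, $K^\alpha = \mathcal F^{-1}(e^{-|\xi|^\alpha})$, together with the scaling identity $K^\alpha_t(x) = t^{-d/\alpha}K^\alpha(xt^{-1/\alpha})$ recorded just above the statement. For part \ref{Properties-k-1}, I would first observe that $e^{-|\xi|^\alpha}$ and each $\xi^\gamma e^{-|\xi|^\alpha}$ lie in $L^1(\mathbb R^d)$, by the super-polynomial decay of $e^{-|\xi|^\alpha}$ at infinity and the local integrability of $|\xi|^{|\gamma|}$ at the origin. Differentiating under the integral sign in \eqref{definition-k} then gives $K^\alpha \in C^\infty$ with $\partial^\gamma K^\alpha = \mathcal F^{-1}[(i\xi)^\gamma e^{-|\xi|^\alpha}]$; each such derivative is continuous and, by Riemann–Lebesgue, vanishes at infinity, so $K^\alpha \in C_0^\infty$.

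For part \ref{norm-k-Lp}, the $L^\infty$ bounds are immediate from that formula, since $\|\partial^\gamma K^\alpha\|_{L^\infty} \le (2\pi)^{-d/2}\|\xi^\gamma e^{-|\xi|^\alpha}\|_{L^1}$, so it only remains to establish enough spatial decay to control the $L^1$ (hence all intermediate $L^p$) norms of $K^\alpha$ and $\nabla K^\alpha$. The mechanism is to transfer powers of $x$ onto frequency derivatives: for a multi-index $\beta$ one has $x^\beta \partial^\gamma K^\alpha(x) = c_\beta\,\mathcal F^{-1}[\partial_\xi^\beta((i\xi)^\gamma e^{-|\xi|^\alpha})]$, so $|x^\beta \partial^\gamma K^\alpha(x)|$ is bounded once $\partial_\xi^\beta(\xi^\gamma e^{-|\xi|^\alpha}) \in L^1$. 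Because differentiating $e^{-|\xi|^\alpha}$ produces, near the origin, at worst a factor $|\xi|^{\alpha - |\beta|}$, these derivatives remain integrable precisely while $|\beta| < d + \alpha$, and they decay super-polynomially at infinity. Choosing $|\beta|$ in this admissible range but strictly larger than $d$ yields $|K^\alpha(x)|, |\nabla K^\alpha(x)| \lesssim (1+|x|)^{-s}$ for some $s > d$, which together with the $L^\infty$ bound places both functions in every $L^p$, $1 \le p \le \infty$.

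For part \ref{norm-k} and the estimates \eqref{estimativa-1}–\eqref{estimativa-2}, I would push this decay through the scaling identity. A change of variables gives $\|K^\alpha_t\|_{L^p} = t^{-\frac d\alpha(1-\frac1p)}\|K^\alpha\|_{L^p}$ and $\|\nabla K^\alpha_t\|_{L^p} = t^{-\frac d\alpha(1-\frac1p)-\frac1\alpha}\|\nabla K^\alpha\|_{L^p}$, both finite by part \ref{norm-k-Lp}, which settles the membership claims. The convolution estimates then follow from Young's inequality $\|f*\varphi\|_{L^p} \le \|f\|_{L^r}\|\varphi\|_{L^q}$ with $\frac1r = 1 - (\frac1q - \frac1p)$: taking $f = K^\alpha_t$ (resp. $f = \nabla K^\alpha_t$) and inserting the scaling of the $L^r$ norm, the exponent $1 - \frac1r = \frac1q - \frac1p$ reproduces exactly the time weight $t^{-\frac d\alpha(\frac1q-\frac1p)}$ of \eqref{estimativa-1}, with the additional factor $-\frac1\alpha$ in \eqref{estimativa-2}. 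The hypothesis $1 \le q \le p \le \infty$ is precisely what guarantees $r \ge 1$, so Young's inequality applies.

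I expect the main obstacle to be the decay analysis in part \ref{norm-k-Lp}: showing that the frequency-side derivatives $\partial_\xi^\beta(\xi^\gamma e^{-|\xi|^\alpha})$ are integrable up to the sharp order $|\beta| < d+\alpha$ requires carefully tracking the homogeneous singularity $|\xi|^{\alpha-|\beta|}$ generated at the origin by the non-smooth symbol $|\xi|^\alpha$ when $\alpha < 2$, and dominating it by an integrable function; once the pointwise decay of $K^\alpha$ and $\nabla K^\alpha$ is in hand, the remainder is just self-similar scaling and Young's inequality.
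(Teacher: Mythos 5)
The paper offers no proof of this lemma at all: it is imported verbatim from \citet{Miao-2008-Well-posedness}, and your argument is essentially the standard proof behind that citation — differentiate under the Fourier integral for smoothness, trade powers of $x$ for frequency derivatives to get pointwise decay and hence $L^p$ membership of $K^{\alpha}$ and $\nabla K^{\alpha}$, then combine the self-similar scaling of $K^{\alpha}_t$ with Young's inequality to produce \eqref{estimativa-1}--\eqref{estimativa-2}. All of those steps are correct, including the observation that $1\leq q\leq p\leq\infty$ is exactly what makes the Young exponent $r$ (with $\tfrac1r=1-(\tfrac1q-\tfrac1p)$) at least $1$, and that the constant is $\|K^{\alpha}\|_{L^r}$ (resp. $\|\nabla K^{\alpha}\|_{L^r}$), so it is independent of $t$ and $\varphi$.

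One caveat on your decay step in part (b): it requires an integer moment $|\beta|$ with $d<|\beta|<d+\alpha$, so you can take $|\beta|=d+1$ only when the diffusion exponent exceeds $1$. That covers every kernel this paper actually uses ($\alpha\in(1,2]$ and $\beta\in(1,d]$), and for the gradient the extra factor $\xi_j$ even relaxes the threshold to $|\beta|<d+\alpha+1$; but your argument as written does not prove the lemma for exponents in $(0,1]$, which the general definition \eqref{eq-definition-fractional-Laplacian-fourier} nominally allows. In that regime one needs the sharp asymptotics $K^{\alpha}(x)\sim c\,|x|^{-(d+\alpha)}$, obtained e.g. by subordination to the Gaussian kernel or by a dyadic decomposition of the Fourier integral, rather than integer moments. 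Since the paper only invokes the lemma with exponents larger than $1$, this limitation does not affect anything downstream.
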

\begin{remark} \label{norm-k-sobolev-space}
Note that for $\varphi \in W^{k,q}\left(\mathbb{R}^{d}\right)$, where $k>0$ is an integer, the estimates
\begin{gather*}
    \left\|K^{\alpha}_t *\varphi\right\|_{W^{k,p}} \leq C t^{-\frac{d}{\alpha}\left(\frac{1}{q}-\frac{1}{p}\right)}\|\varphi\|_{W^{k,q}}, \\
    \left\|\nabla K^{\alpha}_t *\varphi\right\|_{W^{k,p}} \leq C t^{-\frac{d}{\alpha}\left(\frac{1}{q}-\frac{1}{p}\right)-\frac{1}{\alpha}}\|\varphi\|_{W^{k,q}},
\end{gather*}
with $t>0$ and $1 \leq q \leq p \leq \infty$, follow 
from \Cref{Properties-k}\ref{norm-k} by replacing $\varphi$ with $D^\nu \varphi$, where $\nu$ is a multi-index such that $|\nu| \leq k$.
\end{remark}
\begin{lemma} \label{integral-k2}
Let $u \in L^1(\mathbb{R}^d)$, and consider the kernel functions $K^{\alpha}_t $ and $K^{\alpha}$ defined by \eqref{definition-kt} and \eqref{definition-k}, respectively. Then, it follows that 
\begin{equation*}
    \int_{\mathbb{R}^d} K^{\alpha}_{t}* u(x) \mathrm{~d} x=\int_{\mathbb{R}^d} u(x) \mathrm{~d} x \hspace{1cm} \text{ and } \hspace{1cm} \int_{\mathbb{R}^d} \nabla K^{\alpha}_{t} * u(x) \mathrm{~d} x=0. 
\end{equation*}
\end{lemma}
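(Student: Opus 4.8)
The plan is to reduce both identities to two facts about the kernel alone, namely
\begin{equation*}
\int_{\mathbb{R}^d} K^\alpha_t(x)\,\mathrm dx = 1 \qquad\text{and}\qquad \int_{\mathbb{R}^d} \nabla K^\alpha_t(x)\,\mathrm dx = 0,
\end{equation*}
and then to transfer these to the convolutions by Fubini's theorem. First I would note that, by \Cref{Properties-k}\ref{norm-k}, both $K^\alpha_t$ and $\nabla K^\alpha_t$ lie in $L^1(\mathbb{R}^d)$ for each fixed $t>0$; combined with $u\in L^1(\mathbb{R}^d)$, Tonelli's theorem gives $\int_{\mathbb{R}^d}\int_{\mathbb{R}^d}|K^\alpha_t(x-y)|\,|u(y)|\,\mathrm dy\,\mathrm dx = \|K^\alpha_t\|_{L^1}\|u\|_{L^1}<\infty$, so that $(x,y)\mapsto K^\alpha_t(x-y)u(y)$ is integrable on $\mathbb{R}^d\times\mathbb{R}^d$ and Fubini applies (likewise with $\nabla K^\alpha_t$ in place of $K^\alpha_t$).

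Granting the two kernel identities, interchanging the order of integration and translating $x\mapsto x-y$ in the inner integral yields
\begin{equation*}
\int_{\mathbb{R}^d}(K^\alpha_t * u)(x)\,\mathrm dx = \int_{\mathbb{R}^d} u(y)\left(\int_{\mathbb{R}^d} K^\alpha_t(x-y)\,\mathrm dx\right)\mathrm dy = \int_{\mathbb{R}^d} u(y)\,\mathrm dy,
\end{equation*}
and the same computation with $\nabla K^\alpha_t$ gives $\int_{\mathbb{R}^d}(\nabla K^\alpha_t * u)(x)\,\mathrm dx = 0$, the inner integral now vanishing.

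It remains to establish the two kernel facts. For the mass, the scaling relation $K^\alpha_t(x)=t^{-d/\alpha}K^\alpha(x/t^{1/\alpha})$ shows that $\int_{\mathbb{R}^d}K^\alpha_t\,\mathrm dx=\int_{\mathbb{R}^d}K^\alpha\,\mathrm dx$ is independent of $t$, and this common value is read off from the Fourier transform at the origin: since $K^\alpha_t=\mathcal F^{-1}(\mathrm e^{-t|\xi|^\alpha})$, Fourier inversion gives $\widehat{K^\alpha_t}(\xi)=\mathrm e^{-t|\xi|^\alpha}$, so the total mass is, up to the fixed normalization of $\mathcal F$, the value $\widehat{K^\alpha_t}(0)=1$, consistent with $K^\alpha_t$ being an approximate identity as $t\to0^+$. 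For the gradient, I would argue componentwise: either $\widehat{\partial_{x_j}K^\alpha_t}(\xi)=\mathrm i\,\xi_j\,\mathrm e^{-t|\xi|^\alpha}$ vanishes at $\xi=0$, so that $\int_{\mathbb{R}^d}\partial_{x_j}K^\alpha_t\,\mathrm dx=0$ for every $j$; or, more directly, integrating $\partial_{x_j}K^\alpha_t$ first in the variable $x_j$ and using that $K^\alpha_t\in C_0^\infty(\mathbb{R}^d)$ decays at infinity (\Cref{Properties-k}\ref{Properties-k-1}), the fundamental theorem of calculus makes the inner one-dimensional integral vanish, and Fubini finishes.

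The computation is essentially routine; the two points deserving care are the justification of Fubini, which is exactly supplied by the $L^1$ bounds of \Cref{Properties-k}\ref{norm-k}, and the evaluation of the kernel mass, where one should not apply Fubini naively to the oscillatory defining integral \eqref{definition-kt} (the factor $\mathrm e^{ix\cdot\xi}$ is not integrable in $x$). Instead one reads the mass off the value $\widehat{K^\alpha_t}(0)$, or regularizes by a Gaussian factor $\mathrm e^{-\varepsilon|\xi|^2}$ and passes to the limit $\varepsilon\to0^+$ by dominated convergence. I expect tracking the normalization constant to be the only mildly delicate point.
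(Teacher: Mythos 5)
Your proposal is correct and follows essentially the same route as the paper's proof: both reduce the claim to the kernel identities $\int_{\mathbb{R}^d} K^{\alpha}_t\,\mathrm{d}x=1$ and $\int_{\mathbb{R}^d}\nabla K^{\alpha}_t\,\mathrm{d}x=0$, obtained by evaluating the Fourier transform at the origin (via Fourier inversion, the scaling relation between $K^{\alpha}_t$ and $K^{\alpha}$, and the rule $\widehat{\nabla f}(\xi)=i\xi\widehat{f}(\xi)$), and then transfer them to the convolutions by the same Tonelli--Fubini argument. The only differences are cosmetic: you also sketch a fundamental-theorem-of-calculus alternative for the gradient identity and explicitly flag the $(2\pi)^{d/2}$ normalization subtlety, which the paper's proof glosses over.
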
 
\begin{proof}
By the Fourier Inversion Theorem, if a function $f$ and its Fourier transform $\widehat{f}$ both belong to $L^1(\mathbb{R}^d)$, then $f$ coincides almost everywhere with a continuous function $f_0$, and 
$(\widehat{f})^{\vee}=\left(f^{\vee}\right)^{\wedge}=f_0$ holds. 
Thus, since $K^{\alpha}$ and $\widehat{K^{\alpha}}$ lie in $L^1(\mathbb{R}^d)$, it follows that $\int_{\mathbb{R}^d} K^{\alpha}(x) \mathrm{~d} x =\widehat{K^{\alpha}}(0)=1$. 
Next, considering $ T(x) = t^{-1} x \; (t > 0)$, the  Fourier transform satisfies $(f \circ T) \widehat{\; \;}(\xi)=t^d \widehat{f}(t \xi)$.
Applying this to $K^{\alpha}_{t}(x) = t^{-\frac{d}{\alpha}} K^{\alpha}\left(\frac{x}{t^{1/\alpha}}\right)$, we deduce that  
\begin{equation}
    \label{integral-k(a)}
    \int_{\mathbb{R}^d} K^{\alpha}_{t}(x) \mathrm{~d} x  =\widehat{K^{\alpha}_{t}}(0)=\widehat{K^{\alpha}}(0)=1.
\end{equation}
Additionally, using the property $\left(\nabla f\right) \widehat{\;\;}(\xi) = i \xi \widehat{f}(\xi)$ for $f \in C^1(\mathbb{R}^d)$ with $\nabla f \in L^1(\mathbb{R}^d)$, we obtain  
\begin{equation}
    \label{integral-k(b)}
    \int_{\mathbb{R}^d} \nabla K^{\alpha}(x)   \mathrm{~d} x=0.
\end{equation}

Now, let $F(x,y)=K^{\alpha}_{t}(x-y)u(y)$. From \Cref{Properties-k}\ref{norm-k}, we have for a.e. $y \in \mathbb{R}^d$  that $\displaystyle \int_{\mathbb{R}^d}|F(x, y)| \mathrm{d} x=\|K^{\alpha}_{t}\|_{L^1}|u(y)|<\infty$ and, further, $\displaystyle \int_{\mathbb{R}^d} \int_{\mathbb{R}^d}|F(x, y)| \mathrm{d} x \mathrm{d} y=\|K^{\alpha}_{t}\|_{L^1}\|u\|_{L^1}<\infty,$ 
since $u \in L^1(\mathbb{R}^d)$. 
Then, from Tonelli's theorem, we conclude that $F \in L^1\left(\mathbb{R}^d \times \mathbb{R}^d\right)$. 
Consequently, applying Fubini's theorem and using \eqref{integral-k(a)}, we obtain
\begin{equation*}        \int_{\mathbb{R}^d}\int_{\mathbb{R}^d} K^{\alpha}_{t}(x-y) u(x) \mathrm{d}x \mathrm{d} y  = \int_{\mathbb{R}^d} u(x) \left(\int_{\mathbb{R}^d} K^{\alpha}_{t}(x-y)   \mathrm{d}x\right) \mathrm{d} y
= \int_{\mathbb{R}^d} u(y) \mathrm{d} y. 
\end{equation*}
Similarly, using \eqref{integral-k(b)}, we prove that
\begin{align*}
\int_{\mathbb{R}^d} \nabla K^{\alpha}_{t} * v(x) \mathrm{d} x = \int_{\mathbb{R}^d} v(y) \left(\int_{\mathbb{R}^d} \nabla K^{\alpha}_{t}(x-y)   \mathrm{d}x\right) \mathrm{d} y=0.     
\end{align*}
\end{proof}

\section{Main results}
\label{Existence-decay-estimates}
In this section, we state and prove the local and global well-posedness of the solution to the parabolic-parabolic system \eqref{eq-work} and present some properties of the solution. In what follows, we state our main result as \cref{Theorem:local-solutions,Theorem:Global-solutions}. The proofs are deferred to \cref{subsec:Local-existence,subsec:Global-existence}, respectively.

\begin{hyp} Consider the following assumptions on the parameters of the system \eqref{eq-work}, $d, \alpha$ and $\beta$, and on the parameters of the space of solutions, $p$ and $r$:
\begin{enumerate}[topsep=2pt, itemsep=2pt, left=0pt, labelsep=5pt,label=(H\arabic*)]
    \item $d\geq 2$, $\alpha \in (1,2]$ and $\beta \in (1,d]$; \label{(LGA1)} 
    \item parameters $p$ and $r$ satisfy   \label{(LA2)}
    \begin{enumerate}
        \item [\hypertarget{(LA2)(a)}{(a)}] $\max \left\{ \frac{2d}{d+\beta-1},  \frac{d}{\alpha+\beta-2}\right\}< p \leq \frac{d}{\beta -1}$   and $\max  \left\{p,  \frac{p}{p-1},  \frac{d}{\alpha-1}\right\}< r  <\frac{pd}{d-p(\beta-1)}$, 
         \end{enumerate}
         or
         \begin{enumerate}
        \item [\hypertarget{(LA2)(b)}{(b)}]$p > \frac{d}{\beta -1}$ and $ r>\max  \left\{p, \; \frac{p}{p-1}, \; \frac{d}{\alpha-1}\right\}$,
    \end{enumerate}
        and the equality $r=\max  \left\{p, \frac{p}{p-1}\right\}$ is admissible in both cases if $\max  \left\{p, \frac{p}{p-1}\right\}>\frac{d}{\alpha-1}$; 
    \item  if $2\beta \left(\alpha-1\right)\geq\alpha$, the parameters $p$ and $r$ satisfy $\displaystyle p<\frac{d\alpha}{2\beta \left(\alpha-1\right)-\alpha}$ and \ref{(LA2)}. \label{(GA5)}
\end{enumerate}
\end{hyp}

\begin{theorem}[Local well-posedness] \label{Theorem:local-solutions}
Assume that \ref{(LGA1)} and \ref{(LA2)} are in force, and consider the Banach space $\mathbf{X}$ defined as 
\begin{equation*}
    \mathbf{X}=\{u \in C\hspace{-0.1cm}\left([0, T], L^{p}(\mathbb{R}^{d})\right): \|u\|_{\mathbf{X}} \equiv \sup_{t \in[0, T]}\|u(t)\|_{L^{p}} < \infty \}.
\end{equation*}

\noindent
\hypertarget{thm:local(a)}{\textbf{Case (a):}}
Then, for every initial condition $\rho_0 \in L^1(\mathbb{R}^{d}) \cap L^{p}(\mathbb{R}^{d})$ and $\nabla c_0 \in L^{r}(\mathbb{R}^{d})$, there exist $T=T\left(\|\rho_{0}\|_{L^{p}}, \; \left\|\nabla c_{0}\right\|_{L^{r}}\right)$ and a unique local mild solution $(\rho, \nabla c)$ to system \eqref{eq-work} in $[0,T]$, such that $\rho \in C\hspace{-0.1cm}\left([0, T], L^{p}(\mathbb{R}^{d})\right)$, $\nabla c \in C\hspace{-0.1cm}\left([0, T], L^{r}(\mathbb{R}^{d})\right)$, and
\begin{equation}
    \label{local-estimate-c0}
    \sup_{t \in [0, T]} t^{1-\frac{1}{\alpha}\left(\frac{d}{r}+1 \right)}\left\|\nabla c(\cdot, t)\right\|_{L^{r}} <C\left(T,\|\rho_{0}\|_{L^{p}}, \left\|\nabla c_{0}\right\|_{L^{r}} \right).
\end{equation} 

\noindent Moreover, 
\begin{enumerate}[label=(\roman*)]
    \item  $\rho \in  L^{\infty}\left((0,T);L^1 (\mathbb{R}^{d}) \cap L^{p}(\mathbb{R}^{d})\right)$, with
    \begin{equation}
        \label{local-estimate-rho}
        \|\rho(\cdot, t)\|_{L^{p}} \leq C\|\rho_{0}\|_{L^{p}}, 
    \end{equation}
    and the total mass is conserved;\label{thm:local(i)}
    \item  if $ c_0 \in W^{1,r}(\mathbb{R}^{d})$, then $c \in C\hspace{-0.1cm}\left(\left[0, T\right], W^{1,r}(\mathbb{R}^{d})\right)$, and 
    \begin{equation}
        \label{local-estimate-c-c}
        \sup_{t \in [0, T]} t^{1-\frac{1}{\alpha}\left(\frac{d}{r}+1 \right)}\left\| c(\cdot, t)\right\|_{L^{r}} <C\left(T,\; \|\rho_{0}\|_{L^{p}}, \;\left\| c_{0}\right\|_{L^{r}} \right); 
    \end{equation} \label{thm:local(iii)}
    \item  if $\displaystyle \int_{\mathbb{R}^{d}} c_{0}(x) \mathrm{d}x < \infty$, then 
    \begin{equation}
        \label{c_decay1}
        \int_{\mathbb{R}^{d}} c\left(x,t\right) \mathrm{d}x =\left(\int_{\mathbb{R}^{d}}\rho_0(x) \mathrm{d}x\right) \left(\frac{1-e^{-\frac{\gamma}{\tau} t}}{\gamma}\right)+\left(\int_{\mathbb{R}^{d}} c_0 (x) \mathrm{d}x\right)  e^{-\frac{\gamma}{\tau} t}
    \end{equation}
    and, consequently, 
    \begin{equation}
        \label{c_decay2}
        \int_{\mathbb{R}^{d}} c\left(x,t\right) \mathrm{d}x \xrightarrow[\; t\rightarrow \infty \;]{}\frac{\int_{\mathbb{R}^{d}}\rho_0(x) \mathrm{d}x}{\gamma},
    \end{equation}
    which includes $\gamma=0$ by taking $\gamma \rightarrow 0$.  \label{thm:local(iv)}
\end{enumerate} \vspace{0.25cm}

\noindent Assume $\beta \in (1,2]$. We also obtain \vspace{0.1cm}
\begin{enumerate}[label=(\roman*),resume]
    \item $\rho \in L^1\left((0, T) ; L^q(\mathbb{R}^{d})\right)$ for all $q \geq 1$,  and $\rho \in L_{\mathrm{loc}}^{\infty}\left((0, T] ; L^q(\mathbb{R}^{d})\right)$ for all $q>p$. \label{thm:local(v)}
    \item $\nabla c \in C\hspace{-0.1cm}\left([0, T], L^{q}(\mathbb{R}^{d})\right)$ for $q>\frac{d}{\alpha-1}$,  and 
    \begin{equation}
        \label{local-estimate-c0-3}
        \sup_{t \in [0, T]} t^{1-\frac{1}{\alpha}\left(\frac{d}{q}+1 \right)}\left\|\nabla c(\cdot, t)\right\|_{L^{q}} <C\left(T,\|\rho_{0}\|_{L^{p}}, \left\|\nabla c_{0}\right\|_{L^{r}} \right).
    \end{equation} \label{thm:local(vi)}
\end{enumerate} 

\noindent\hypertarget{thm:local(b)}{\textbf{Case (b):}} Let $\alpha \leq \beta$ and the initial condition be such that $\rho_0 \in L^1(\mathbb{R}^{d}) \cap L^{p}(\mathbb{R}^{d})$ and $\nabla c_0 \in L^{\wp}(\mathbb{R}^{d})$, where 
\begin{equation}
    \label{codition-r-0-Theorem}
    \wp \in \left[\frac{d}{\alpha-1}, r \right). 
\end{equation}
\begin{enumerate}[left=40pt, itemsep=7pt, topsep=2pt]
    \item [\hypertarget{thm:local(b)-1}{\textbf{(b.1)}}] If $\alpha=\beta$ and $\wp=\frac{d}{\alpha-1}$, there exist $\epsilon>0$ and $T=T\left(\|\rho_{0}\|_{L^{p}}, \left\|\nabla c_{0}\right\|_{L^{\wp}},\epsilon \right)$ such that, if \, $\left\|\nabla c_{0}\right\|_{L^{\wp}}<\epsilon$, there exists a unique local mild solution $(\rho, \nabla c)$ to system \eqref{eq-work},  with $\rho \in C\hspace{-0.1cm}\left([0, T], L^{p}(\mathbb{R}^{d})\right)$ and $\nabla c \in C\hspace{-0.1cm}\left((0, T], L^{r}(\mathbb{R}^{d})\right)$. Moreover, 
\begin{equation}
    \label{local-estimate-c0-2}
    \sup_{t \in (0, T]} t^{1-\frac{1}{\alpha}\left(\frac{d}{r}+1 \right)}\left\|\nabla c(\cdot, t)\right\|_{L^{r}} <C\left(T,\; \|\rho_{0}\|_{L^{p}}, \;\left\|\nabla c_{0}\right\|_{L^{\wp}} \right),
\end{equation} 
and \ref{thm:local(i)}, \ref{thm:local(iii)}, \ref{thm:local(iv)}, and \ref{thm:local(v)} are still satisfied. Additionally, \ref{thm:local(vi)} is also satisfied by replacing $\left\|\nabla c_{0}\right\|_{L^{r}}$ with $\left\|\nabla c_{0}\right\|_{L^{\wp}}$ in \eqref{local-estimate-c0-3}.
    \item [\hypertarget{thm:local(b)-2}{\textbf{(b.2)}}] Otherwise, the results in \hyperlink{thm:local(b)-1}{\textbf{(b.1)}} remain valid without the need for the smallness condition on $\left\|\nabla c_{0}\right\|_{L^{\wp}}$, and $T=T\left(\|\rho_{0}\|_{L^{p}}, \left\|\nabla c_{0}\right\|_{L^{\wp}} \right)$. 
\end{enumerate}
\end{theorem}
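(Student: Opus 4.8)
The plan is to recast \eqref{mild-solution-rho}--\eqref{def-eq-c} as a fixed-point problem for the pair $(\rho,\nabla c)$ and apply the Banach fixed point theorem. I would work in the product space $\mathbf X\times\mathbf Y$, with $\mathbf X=C([0,T],L^p)$ as in the statement and $\mathbf Y=C([0,T],L^r)$ under the sup-in-time $L^r$ norm, and define $\Phi=(\Phi_1,\Phi_2)$ by the right-hand sides of \eqref{mild-solution-rho} and \eqref{def-eq-c}. The first task is well-definedness: Hölder puts the product $\rho(s)\nabla c(s)$ in $L^m$ with $\frac1m=\frac1p+\frac1r$, which forces $\frac1p+\frac1r\le1$, i.e.\ $r\ge\frac{p}{p-1}$, and then the kernel bounds \eqref{estimativa-1}--\eqref{estimativa-2} of \Cref{Properties-k} control every term.

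The heart of the argument is the self-map and contraction estimates, which reduce to two Beta-type integrals. For $\Phi_1$, estimate \eqref{estimativa-2} with target $L^p$ and source $L^m$ produces the factor $(t-s)^{-\frac1\alpha(\frac dr+1)}$; integrating against the $L^p\times L^r$ norms gives a bound $C\|\rho_0\|_{L^p}+CT^{\sigma}\|\rho\|_{\mathbf X}\|\nabla c\|_{\mathbf Y}$ with $\sigma=1-\frac1\alpha(\frac dr+1)$, where both convergence at $s=t$ and positivity of the $T$-power demand $\sigma>0$, i.e.\ $r>\frac{d}{\alpha-1}$. For $\Phi_2$ (linear in $\rho$), \eqref{estimativa-2} with source $L^p$ and target $L^r$ produces $(t-s)^{-\frac d\beta(\frac1p-\frac1r)-\frac1\beta}$, whose integrability is precisely $\frac d\beta(\frac1p-\frac1r)+\frac1\beta<1$, equivalent to the upper bound $r<\frac{pd}{d-p(\beta-1)}$, and again leaves a positive power of $T$. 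Splitting $\rho_1\nabla c_1-\rho_2\nabla c_2=(\rho_1-\rho_2)\nabla c_1+\rho_2(\nabla c_1-\nabla c_2)$ yields the matching Lipschitz bounds, so small $T$ makes $\Phi$ a contraction on a ball of $\mathbf X\times\mathbf Y$; its fixed point is the unique mild solution, and the singular $T$-powers force the Duhamel terms to vanish as $t\to0$, giving $\rho\in C([0,T],L^p)$, $\nabla c\in C([0,T],L^r)$ and \eqref{local-estimate-c0}.

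The supplementary properties follow by propagation and integration. Estimate \eqref{local-estimate-rho} is read off the ball; the $L^1$ control and mass conservation come from integrating \eqref{mild-solution-rho} and invoking \Cref{integral-k2}, the identity $\|\rho\nabla c\|_{L^1}\le\|\rho\|_{L^{r'}}\|\nabla c\|_{L^r}$ being legitimate exactly because $r\ge\frac{p}{p-1}$ places $r'$ between $1$ and $p$. For \ref{thm:local(iii)}--\ref{thm:local(iv)} I would rerun the estimates on the scalar mild formula for $c$ (the gradient-free analogue of \eqref{def-eq-c}) and then integrate it using \Cref{integral-k2} and mass conservation to get \eqref{c_decay1} and its limit \eqref{c_decay2}. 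Properties \ref{thm:local(v)}--\ref{thm:local(vi)} for $\beta\in(1,2]$ are a bootstrap, raising the integrability of $\rho$ and $\nabla c$ on $(0,T]$ at the cost of the expected singular time weights. Case (b) runs the same scheme with $\nabla c_0\in L^\wp$, $\wp<r$, where \eqref{estimativa-1} carries $\nabla c$ from $L^\wp$ into $L^r$ with weight $t^{-\sigma}$; the borderline \hyperlink{thm:local(b)-1}{(b.1)} with $\alpha=\beta$, $\wp=\frac d{\alpha-1}$ is scaling-critical, so the $T$-power degenerates to $T^0$ and smallness must instead be extracted from $\|\nabla c_0\|_{L^\wp}<\epsilon$, whereas \hyperlink{thm:local(b)-2}{(b.2)} is subcritical and needs none.

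The step I expect to be the main obstacle is the exponent bookkeeping: one must check simultaneously that every Duhamel integral converges (no non-integrable singularity at $s=t$ or $s=0$), that each retains a strictly positive power of $T$, and that the admissible window for $r$ is nonempty. The last is equivalent to the two lower bounds on $p$, since $\frac{d}{\alpha-1}<\frac{pd}{d-p(\beta-1)}$ reduces to the supercriticality $p>\frac{d}{\alpha+\beta-2}$ and $\frac{p}{p-1}<\frac{pd}{d-p(\beta-1)}$ reduces to $p>\frac{2d}{d+\beta-1}$. Verifying that \ref{(LA2)} are exactly these compatibility conditions, rather than merely sufficient ones, is the delicate point isolated in \Cref{Parameters-for-Lebesgue-Spaces}.
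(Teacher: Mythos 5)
Your core fixed-point scheme is sound and is essentially the paper's argument in different packaging: you iterate on the pair $(\rho,\nabla c)$ in a product space, whereas the paper substitutes the Duhamel formula \eqref{def-eq-c} into \eqref{mild-solution-rho} and solves a single equation $\rho=u_1+\mathcal{A}(\rho,\rho)+\mathcal{L}(\rho)$ for $\rho$ alone via \Cref{lemma-contraction}. Your exponent bookkeeping (the singularity $(t-s)^{-\frac{1}{\alpha}\left(\frac{d}{r}+1\right)}$ requiring $r>\frac{d}{\alpha-1}$, the $\nabla c$-Duhamel exponent requiring $r<\frac{pd}{d-p(\beta-1)}$, the criticality of \hyperlink{thm:local(b)-1}{\textbf{(b.1)}} forcing smallness of $\left\|\nabla c_0\right\|_{L^{\wp}}$, and the reduction of the nonemptiness of the $r$-window to the two lower bounds on $p$) is correct and matches \Cref{Parameters-for-Lebesgue-Spaces}.

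There is, however, a genuine gap in your proof of \ref{thm:local(i)}. You justify $\|\rho\nabla c\|_{L^1}\le\|\rho\|_{L^{r'}}\|\nabla c\|_{L^r}$ ``because $r\ge \frac{p}{p-1}$ places $r'$ between $1$ and $p$.'' But $1\le r'\le p$ only guarantees finiteness of $\|\rho(t)\|_{L^{r'}}$ if one already knows $\rho(t)\in L^1\cap L^p$ (by interpolation), and $\rho(t)\in L^1$ is precisely what is being proved. Your fixed point yields only $\rho\in C([0,T],L^p)$, and since \ref{(LA2)} generically forces $r>\frac{p}{p-1}$ strictly (so $r'<p$ strictly), membership in $L^p$ alone says nothing about $L^{r'}$: the argument is circular as written, and the same hole invalidates your appeals to \Cref{integral-k2} for mass conservation and for \ref{thm:local(iv)}, both of which need $(\rho\nabla c)(\cdot,t)\in L^1$. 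The paper closes this with a downward iteration in exponents: first prove $\rho(t)\in L^{p_1}$ for $p_1$ with $\frac{rp}{p+r}\le p_1<p$ --- permissible because then $\rho\nabla c\in L^m$ with $\frac{1}{m}=\frac{1}{p}+\frac{1}{r}\le\frac{1}{p_1}$, so \eqref{estimativa-2} can land in $L^{p_1}$, while the free term is fine since $\rho_0\in L^1\cap L^p\subset L^{p_1}$ --- and then repeat with $p_1$ in place of $p$, lowering the exponent by at most $\frac{1}{r}$ per step, reaching $L^1$ in finitely many steps; only then is H\"older with $\frac{1}{p_1}+\frac{1}{r}=1$ legitimate. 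An alternative repair consistent with your setup is to run the contraction in $C([0,T],L^1\cap L^p)\times\mathbf{Y}$ from the start, where the interpolation $L^1\cap L^p\subset L^{r'}$ is licensed by the norm of the space itself; but some such device is indispensable, and as submitted the step fails.
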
 
\begin{remark} \label{Remark-For-Lucilla-Corrias-Benoıt-Perthame}
    For $d\geq 3$ and $\alpha=\beta=2$, \Cref{Theorem:local-solutions} recovers and extends the result established by \citet{Perthame-2008}. Specifically, in Theorem 2.1, \citet{Perthame-2008} established
    the existence of a local solution for the classical parabolic-parabolic Keller-Segel model with initial data $\rho_{0}\in L^p(\mathbb R^d)$ and $\nabla c_{0}\in L^{\wp}(\mathbb R^d)$ such that $p>d/2$, and $\wp=d$. In contrast, in \Cref{Theorem:local-solutions} $\wp$ can vary within the range $[d,r]$, where $r$ satisfies $d<r<\frac{pq}{d-p}$ for $p \in \left(\frac{d}{2}, d\right)$, or $r>p$ for $p>d$.
\end{remark}
\begin{remark} \label{Remark-new-theorem}
For \hyperlink{thm:local(b)}{\textbf{Case (b)}}, as shown in \Cref{lemma-condition-initial-local}, the conditions on $\alpha$, $\beta$ and $\wp$ in \hyperlink{thm:local(b)-1}{\textbf{(b.1)}} imply that
\begin{equation}
    \label{codition-r-0-Theorem2}
    \frac{1}{\alpha}\left(\frac{d}{r}+1\right)+\frac{d}{\beta}\left(\frac{1}{\wp }-\frac{1}{r}\right) = 1, 
\end{equation}
and, in \hyperlink{thm:local(b)-2}{\textbf{(b.2)}}, that 
\begin{equation}
    \label{codition-r-0-Theorem2-2}
    \frac{1}{\alpha}\left(\frac{d}{r}+1\right)+\frac{d}{\beta}\left(\frac{1}{\wp }-\frac{1}{r}\right)<1.    
\end{equation}
As we point out in the proof of \Cref{Theorem:local-solutions}, this difference is what justifies the fact that in \hyperlink{thm:local(b)-1}{\textbf{(b.1)}}, one more condition is necessary for the existence of solution. 
\end{remark}
\begin{theorem}[Global well-posedness] \label{Theorem:Global-solutions}
    Assume that \ref{(LGA1)}, \ref{(LA2)} and \ref{(GA5)} are in force, and consider the Banach space $\mathbf{X}$ defined as 
    \begin{equation*}
        \mathbf{X}=\{u \in C\hspace{-0.1cm}\left([0, \infty), L^{p}(\mathbb{R}^{d})\right): \|u\|_{\mathbf{X}} \equiv \underset{t>0}{sup} \; t^{\sigma} \left\|u (t) \right\|_{L^{p}} < \infty \},
    \end{equation*}
    where 
    \begin{equation}
        \label{definition-sigma}
        \sigma=2-\frac{1}{\alpha}\left(\frac{d}{r}+1 \right)-\frac{d}{\beta}\left(\frac{1}{p}-\frac{1}{r}\right)-\frac{1}{\beta}.
    \end{equation}
    Fix  \begin{equation}
        \label{definition-p1-p2}
        p_1=\frac{pd}{\alpha \sigma p+d} \qquad \text{and} \qquad p_{2}=\frac{dr\alpha}{\beta \left(r(\alpha-1)-d\right)+d\alpha}.
    \end{equation}
   There  exists $\epsilon>0$ such that, if 
    \begin{equation}
        \label{condition-global}
        \|\rho_{0}\|_{L^{p_1}} +\left\|\nabla c_{0}\right\|_{L^{p_2}}< \epsilon,
    \end{equation}
    then there exists a unique global mild solution $(\rho, \nabla c)$ to system \eqref{eq-work} such that $\rho \in C\hspace{-0.1cm}\left((0, \infty), L^{p}(\mathbb{R}^{d})\right)\cap \mathbf{X}$ and  $\nabla c \in C\hspace{-0.1cm}\left((0, \infty), L^{r}(\mathbb{R}^{d})\right)$, with initial condition $\rho_0 \in L^{p_1}(\mathbb{R}^{d})$ and $\nabla c_0 \in L^{p_2}(\mathbb{R}^{d})$. 
    Moreover,    
    \begin{equation}
        \label{global-estimate-rho}
        \sup_{t \geq 0} t^{\sigma}\|\rho(\cdot, t)\|_{L^{p}} \leq C\|\rho_{0}\|_{L^{p_1}}, 
    \end{equation}
    and    
    \begin{equation}
        \label{global-estimate-c}
        \sup_{t \geq 0} t^{1-\frac{1}{\alpha}\left(\frac{d}{r}+1 \right)}\left\|\nabla c(\cdot, t)\right\|_{L^{r}} <C \left\|\nabla c_{0}\right\|_{L^{p_2}}+ C  \left\|\rho_0 \right\|_{L^{p_1}}. 
    \end{equation}

    Furthermore,  if, in addition to \ref{(GA5)} (or equivalently, in this case, \ref{(LA2)}) $p$ satisfies
    \begin{equation}
        \label{condition-2-global-p-1}
        p \leq \frac{2d}{(\alpha-1)+2(\beta-1)}
    \end{equation}
    or $p$ and $r$ satisfy 
    \begin{equation}
        \label{condition-2-global-p-2}
        \frac{2d}{(\alpha-1)+2(\beta-1)} < p < \frac{\alpha d }{\max\left\{2\beta \left(\alpha-1\right)-\alpha,  \; \alpha(\alpha-2)+\beta \right\}}  
    \end{equation}
    and 
    \begin{equation}
        \label{condition-2-global-r-1}
         r \leq \frac{(2\beta-\alpha)pd}{[\beta(\alpha-1)+\alpha(\beta-1)]p-\alpha d}, 
    \end{equation}
    then $\rho \in L^{\infty}\left((0, \infty), L^{p_1}(\mathbb{R}^{d})\right)$.
\end{theorem}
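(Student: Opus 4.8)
The plan is to return to the Duhamel formula \eqref{mild-solution-rho} for $\rho$ and estimate its $L^{p_1}$ norm directly, exploiting the decay rates \eqref{global-estimate-rho} and \eqref{global-estimate-c} already secured for the global solution. Writing
\[
\|\rho(\cdot,t)\|_{L^{p_1}} \leq \|K^{\alpha}_t * \rho_0\|_{L^{p_1}} + \int_0^t \left\|\nabla K^{\alpha}_{t-s} * [\rho(s)\nabla c(s)]\right\|_{L^{p_1}}\, \mathrm{d}s,
\]
the first (linear) term is controlled by applying \eqref{estimativa-1} from \Cref{Properties-k}\ref{norm-k} with $q=p=p_1$, which yields exponent $0$ and hence the uniform bound $\|K^{\alpha}_t * \rho_0\|_{L^{p_1}} \leq C\|\rho_0\|_{L^{p_1}}$.

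For the nonlinear term, I would first apply \eqref{estimativa-2} with target exponent $p_1$ and intermediate exponent $m$ defined by $\tfrac{1}{m} = \tfrac{1}{p} + \tfrac{1}{r}$, and then H\"older's inequality to split $\|\rho(s)\nabla c(s)\|_{L^m} \leq \|\rho(s)\|_{L^p}\|\nabla c(s)\|_{L^r}$. Inserting the decay estimates \eqref{global-estimate-rho} and \eqref{global-estimate-c}, and absorbing the data into a constant proportional to $\|\rho_0\|_{L^{p_1}}\bigl(\|\nabla c_0\|_{L^{p_2}}+\|\rho_0\|_{L^{p_1}}\bigr)$, reduces the estimate to the time integral
\[
\int_0^t (t-s)^{-\frac{d}{\alpha}\left(\frac{1}{m}-\frac{1}{p_1}\right)-\frac{1}{\alpha}}\, s^{-\sigma - \left(1-\frac{1}{\alpha}\left(\frac{d}{r}+1\right)\right)}\, \mathrm{d}s.
\]

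The decisive structural fact is that, using the identity $\tfrac{1}{p_1} = \tfrac{1}{p} + \tfrac{\alpha\sigma}{d}$ that follows from \eqref{definition-p1-p2} together with the definition \eqref{definition-sigma} of $\sigma$, the two exponents $a = \frac{d}{\alpha}\left(\frac{1}{m}-\frac{1}{p_1}\right)+\frac{1}{\alpha}$ and $b = \sigma + 1 - \frac{1}{\alpha}\left(\frac{d}{r}+1\right)$ satisfy $a+b=1$ identically. Consequently the integral is a Beta integral equal to $t^{1-a-b}B(1-a,1-b) = B(1-a,1-b)$, which is finite and independent of $t$ precisely when $a<1$ and $b<1$; this delivers the desired uniform bound $\rho \in L^{\infty}\left((0,\infty), L^{p_1}(\mathbb{R}^d)\right)$.

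The main obstacle, and the only genuinely delicate part, is verifying that the two endpoint-integrability conditions, namely $a<1$ (integrability near $s=t$) and $b<1$ (integrability near $s=0$), together with the admissibility requirement $1 \leq m \leq p_1$ needed to invoke \eqref{estimativa-2}, are exactly encoded by the hypotheses \eqref{condition-2-global-p-1}, or by \eqref{condition-2-global-p-2} together with \eqref{condition-2-global-r-1}. Expanding $a$ and $b$ through \eqref{definition-sigma} turns each inequality into an affine constraint on $\tfrac{1}{p}$ and $\tfrac{1}{r}$; solving these for $p$, and in the larger-$p$ regime also for $r$, reproduces the stated thresholds, the dichotomy in the hypotheses reflecting whether the binding constraint originates from the $s=t$ or the $s=0$ endpoint. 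This algebraic reduction is of the same nature as the parameter analysis in \Cref{Parameters-for-Lebesgue-Spaces}, to which I would defer the detailed computations.
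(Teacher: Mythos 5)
Your proposal proves only the final assertion of the theorem, namely $\rho \in L^{\infty}\left((0,\infty), L^{p_1}(\mathbb{R}^{d})\right)$ under \eqref{condition-2-global-p-1} or \eqref{condition-2-global-p-2}--\eqref{condition-2-global-r-1}, and it does so by taking as inputs the decay estimates \eqref{global-estimate-rho} and \eqref{global-estimate-c}, which you describe as ``already secured.'' But those estimates, together with the existence and uniqueness of the global mild solution under the smallness condition \eqref{condition-global}, are the principal conclusions of the very theorem being proved; invoking them without proof is circular. The missing core is the fixed-point construction in the weighted space $\mathbf{X}$ with norm $\sup_{t>0} t^{\sigma}\|u(t)\|_{L^p}$: one must show $\|u_1\|_{\mathbf{X}} \leq C_1\|\rho_0\|_{L^{p_1}}$, $\|\mathcal{A}(u,v)\|_{\mathbf{X}} \leq C_2\|u\|_{\mathbf{X}}\|v\|_{\mathbf{X}}$, and $\|\mathcal{L}(u)\|_{\mathbf{X}} \leq C_3\|\nabla c_0\|_{L^{p_2}}\|u\|_{\mathbf{X}}$ with constants \emph{independent of time} (this is exactly where the specific choices of $\sigma$ in \eqref{definition-sigma} and of $p_1$, $p_2$ in \eqref{definition-p1-p2} enter, e.g.\ $p_2$ is designed so that $\frac{1}{\alpha}\left(\frac{d}{r}+1\right)+\frac{d}{\beta}\left(\frac{1}{p_2}-\frac{1}{r}\right)=1$), then apply \Cref{lemma-contraction} with $\epsilon$ small to obtain existence, uniqueness, and \eqref{global-estimate-rho}, and finally derive \eqref{global-estimate-c} by a separate estimate of the Duhamel formula \eqref{def-eq-c} for $\nabla c$. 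None of this appears in your proposal, so as a proof of the stated theorem it has a genuine gap.

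For the part you do treat, your argument coincides with the paper's: the same decomposition of \eqref{mild-solution-rho}, the same intermediate exponent ($1/m = 1/p + 1/r$, the paper's $q$), H\"older, and the observation that the two time exponents sum to give a $t$-independent Beta integral (the paper phrases this through \Cref{Lemma-Estimativa-Integral} with $a+b+1=0$). One attribution slip worth correcting: the endpoint conditions $a<1$ and $b<1$ are \emph{not} what the extra hypotheses encode — they already follow from \ref{(LGA1)}--\ref{(GA5)} via \eqref{condition-global-3(e)} of \Cref{lemma-conditions-geral-global}. The extra hypotheses \eqref{condition-2-global-p-1}, or \eqref{condition-2-global-p-2} with \eqref{condition-2-global-r-1}, are needed precisely and only for the admissibility requirement $m \leq p_1$ in \eqref{estimativa-2}, which by $\frac{1}{p_1}=\frac{\alpha\sigma}{d}+\frac{1}{p}$ is equivalent to $r\sigma \leq d/\alpha$, the content of \Cref{lemma-condition-extra-p-r}.
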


\subsection{Overall idea of the proofs of the main theorems}
\label{subsec:Overall-idea}

The proofs of \cref{Theorem:local-solutions} and \cref{Theorem:Global-solutions} rely on reformulating system \eqref{eq-work} using its integral representation \eqref{mild-solution-rho} and \eqref{def-eq-c} to transform the problem of finding solutions to \eqref{eq-work} into a fixed point problem in a suitable Banach space $\mathbf{X}$. Then, by applying the following abstract result, which is based on \citep[Lemma 20]{Wavelets-paraproducts-Navier-Stokes}, we obtain a unique solution to the problem. 

\begin{lemma} \label{lemma-contraction}
Let $(\mathbf{X}, \| \cdot \|_{\mathbf{X}})$ be a Banach space. Assume that $\mathcal{A}:\mathbf{X} \times \mathbf{X} \rightarrow \mathbf{X}$ is a bounded bilinear form satisfying 
 \begin{equation}
    \label{definition-C-A}
    \|\mathcal{A}(u,v)\|_{\mathbf{X}} \leq C_{\mathcal{A}}\|u\|_{\mathbf{X}}\|v\|_{\mathbf{X}} \quad \text{for all } \; u, \; v \in \mathbf{X}, \text{ and a constant } \; C_\mathcal{A}>0,
\end{equation}
and $\mathcal{L}:\mathbf{X} \rightarrow \mathbf{X}$ is a bounded linear form satisfying
\begin{equation}
    \label{definition-C-L}
    \|\mathcal{L}(u)\|_{\mathbf{X}} \leq C_\mathcal{L}\|u\|_{\mathbf{X}} \quad \text{for all } \; u \in \mathbf{X}, \text{ and a constant } \;  C_\mathcal{L}>0.
\end{equation}
Then, if $\; 0<\delta<\frac{1-2C_\mathcal{L}}{4C_\mathcal{A}}$ and $\; u_1 \in \mathbf{X}$ is such that $\|u_1\|_{\mathbf{X}}<\delta$, the equation 
\[u = u_1+\mathcal{A}(u, u)+\mathcal{L}(u)\]
has a solution in $\mathbf{X}$ such that $\|u\|_{\mathbf{X}}\leq 2\delta$. This solution is the only one in the ball $\bar{B}(0,2\delta)$.  
\end{lemma}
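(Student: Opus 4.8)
The plan is to realize the solution as the unique fixed point of the map $\Phi(u) = u_1 + \mathcal{A}(u,u) + \mathcal{L}(u)$ on the closed ball $\bar{B}(0, 2\delta) \subset \mathbf{X}$, via the Banach fixed point theorem. Since $\mathbf{X}$ is complete and $\bar{B}(0,2\delta)$ is closed, this ball is a complete metric space under the induced metric, so it suffices to verify that $\Phi$ maps the ball into itself and is a strict contraction there.

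First I would check the self-mapping property. For $u \in \bar{B}(0,2\delta)$, the bounds \eqref{definition-C-A} and \eqref{definition-C-L} together with $\|u_1\|_{\mathbf{X}} < \delta$ give
\[
\|\Phi(u)\|_{\mathbf{X}} \leq \|u_1\|_{\mathbf{X}} + C_\mathcal{A}\|u\|_{\mathbf{X}}^2 + C_\mathcal{L}\|u\|_{\mathbf{X}} \leq \delta + 4C_\mathcal{A}\delta^2 + 2C_\mathcal{L}\delta.
\]
Requiring the right-hand side to be at most $2\delta$ reduces, after dividing by $\delta > 0$, to $4C_\mathcal{A}\delta + 2C_\mathcal{L} \leq 1$, which is exactly guaranteed by the hypothesis $\delta < \frac{1-2C_\mathcal{L}}{4C_\mathcal{A}}$. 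Note this hypothesis implicitly forces $C_\mathcal{L} < 1/2$, so that the admissible range for $\delta$ is nonempty. Hence $\Phi$ maps $\bar{B}(0,2\delta)$ into itself.

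Next I would establish the contraction estimate. Exploiting bilinearity to write $\mathcal{A}(u,u) - \mathcal{A}(v,v) = \mathcal{A}(u, u-v) + \mathcal{A}(u-v, v)$, for $u, v \in \bar{B}(0,2\delta)$ one obtains
\[
\|\Phi(u) - \Phi(v)\|_{\mathbf{X}} \leq C_\mathcal{A}\big(\|u\|_{\mathbf{X}} + \|v\|_{\mathbf{X}}\big)\|u-v\|_{\mathbf{X}} + C_\mathcal{L}\|u-v\|_{\mathbf{X}} \leq \big(4C_\mathcal{A}\delta + C_\mathcal{L}\big)\|u-v\|_{\mathbf{X}}.
\]
The same inequality $4C_\mathcal{A}\delta < 1 - 2C_\mathcal{L}$ shows the Lipschitz constant satisfies $4C_\mathcal{A}\delta + C_\mathcal{L} < 1 - C_\mathcal{L} \leq 1$, so $\Phi$ is a strict contraction on the ball. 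The Banach fixed point theorem then produces a unique $u \in \bar{B}(0,2\delta)$ with $\Phi(u) = u$, i.e.\ a unique solution of $u = u_1 + \mathcal{A}(u,u) + \mathcal{L}(u)$ satisfying $\|u\|_{\mathbf{X}} \leq 2\delta$, which is precisely the claimed conclusion.

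There is no substantial obstacle here, as this is the standard quadratic-equation fixed point scheme. The only point requiring care is the bookkeeping of constants: one must confirm that the single condition $\delta < \frac{1-2C_\mathcal{L}}{4C_\mathcal{A}}$ simultaneously delivers both invariance of the ball (needing $4C_\mathcal{A}\delta + 2C_\mathcal{L} \leq 1$) and the contraction property (needing $4C_\mathcal{A}\delta + C_\mathcal{L} < 1$), with the former being the binding constraint.
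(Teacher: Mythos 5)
Your proposal is correct and follows essentially the same route as the paper: the paper simply unrolls the Picard iteration $u_{n+1}=u_1+\mathcal{A}(u_n,u_n)+\mathcal{L}(u_n)$ by hand (inductive bound $\|u_n\|_{\mathbf{X}}\leq 2\delta$, then a geometric contraction estimate with constant $4\delta C_{\mathcal{A}}+C_{\mathcal{L}}<1$), which is exactly the content of your invocation of the Banach fixed point theorem on $\bar{B}(0,2\delta)$. The two key inequalities — the self-mapping bound $\delta+4C_{\mathcal{A}}\delta^2+2C_{\mathcal{L}}\delta\leq 2\delta$ and the Lipschitz bound via the bilinear splitting $\mathcal{A}(u,u)-\mathcal{A}(v,v)=\mathcal{A}(u,u-v)+\mathcal{A}(u-v,v)$ — are identical in both arguments.
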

\begin{proof} 
Starting from $u_1$, we define recursively the sequence \[u_{n+1}=u_{1}+\mathcal{A}\left(u_{n}, u_{n}\right)+\mathcal{L}\left(u_{n}\right), \mbox{ for all } n\in\mathbb N.\]

First, let us prove that $\left\|u_{n}\right\|_{\mathbf{X}} \leq 2 \delta$, for all $n\in \mathbb N$. By hypothesis, this is valid for $n=1$.
By induction, we assume that $\left\|u_{n}\right\|_{\mathbf{X}} \leq 2 \delta$. Then 
\begin{equation*}
    \left\|u_{n+1}\right\|_{\mathbf{X}} \leq\left\|u_{1}\right\|_{\mathbf{X}}+C_{\mathcal{A}}\left\|u_{n}\right\|_{\mathbf{X}}^{2} +C_{\mathcal{L}}\left\|u_{n}\right\|_{\mathbf{X}} 
    \leq \delta+4 C_{\mathcal{A}} \delta^{2}+2C_{\mathcal{L}} \delta 
    =\delta+\delta\left(4 C_{\mathcal{A}} \delta+2C_{\mathcal{L}} \right) 
    \leq 2 \delta,
\end{equation*}
since $4\delta C_\mathcal{A}  +2C_\mathcal{L}<1$. 

Next, observe that
\begin{equation*}
    \begin{split}
        \|u_{n+1}-u_{n}\|_{\mathbf{X}} &=\left\| \mathcal{A}\left(u_{n}, u_{n}\right)-\mathcal{A}\left(u_{n-1}, u_{n-1}\right)+\mathcal{L}\left(u_{n}\right)-\mathcal{L}\left(u_{n-1}\right)\right\|_{\mathbf{X}} \\
         &=\left\| \mathcal{A}\left(u_{n}-u_{n-1}, u_{n}\right)+\mathcal{A}\left(u_{n-1}, u_{n}-u_{n-1}\right)+\mathcal{L}\left(u_{n}\right)-\mathcal{L}\left(u_{n-1}\right)\right\|_{\mathbf{X}} \\ 
        &\leq \left\| \mathcal{A}\left(u_{n}-u_{n-1}, u_{n}\right)\right\|_{\mathbf{X}}+\left\|\mathcal{A}\left(u_{n-1}, u_{n}-u_{n-1}\right)\right\|_{\mathbf{X}}+\left\|\mathcal{L}\left(u_{n}\right)-\mathcal{L}\left(u_{n-1}\right)\right\|_{\mathbf{X}} \\
        &\leq C_\mathcal{A}\|u_{n} \|_{\mathbf{X}} \|u_{n}-u_{n-1} \|_{\mathbf{X}} + C_\mathcal{A}\|u_{n-1} \|_{\mathbf{X}} \|u_{n}-u_{n-1} \|_{\mathbf{X}} +C_\mathcal{L} \|u_{n}-u_{n-1} \|_{\mathbf{X}} \\
        &\leq \left(4\delta C_\mathcal{A}  +C_\mathcal{L}\right) \|u_{n}-u_{n-1} \|_{\mathbf{X}} \\
        &= C \|u_{n}-u_{n-1} \|_{\mathbf{X}}, 
    \end{split}
\end{equation*}
where $C=4\delta C_\mathcal{A}  +C_\mathcal{L}<1$. Hence $\left\|u_{n+1}-u_{n}\right\|_{\mathbf{X}} \leq C^{n}\left\|u_{2}-u_{1}\right\|_{\mathbf{X}}$. 

Therefore, $u_{n}$ converges to a limit $u$, which is the required solution.  To prove the uniqueness of
$u$ in $\bar{B}(0,2 \delta)$, assume there exists another solution $v$ in $\bar{B}(0,2 \delta)$. By applying the same calculations as before, we obtain $\|u-v\|\leq C\|u-v\|$, which implies $u=v$.
\end{proof}

The strategies for the proofs of \cref{Theorem:local-solutions} and \cref{Theorem:Global-solutions} are as follows.

\subsubsection*{Step 1.} 
Building upon \Cref{lemma-contraction} and \Cref{Definition:Mild-solution-Keller–Segel}, we define the components required for the fixed-point formulation. 
We start by defining $u_1$, which incorporates the initial condition $\rho_0$, as 
\begin{equation}
    \label{def-u0}
    u_1(x,t)=K^{\alpha}_{t}* \rho_{0}(x).
\end{equation}
Next, we define the bilinear form $\mathcal{A}:\mathbf{X} \times \mathbf{X} \rightarrow \mathbf{X}$ as
\begin{equation}
    \label{def-B}
    \mathcal{A}(u,v)(t)=-\int_{0}^{t} \nabla K^{\alpha}_{t-s}*\hspace{-0.08cm} \left(\hspace{-0.08cm} u(s) \int_{0}^{s} \frac{1}{\tau} e^{\gamma\left(\frac{w-s}{\tau}\right)} \nabla K^{\beta}_{\frac{s-w}{\tau}} * v(w) \mathrm{d} w \hspace{-0.08cm}\right) \mathrm{d} s.
\end{equation}
The linear form  $\mathcal{L}:\mathbf{X} \rightarrow \mathbf{X}$, which captures the influence of the initial concentration gradient, is defined as 
\begin{equation}
    \label{def-L}
    \mathcal{L}(u)(t)=-\int_{0}^{t} \nabla K^{\alpha}_{t-s} *\left[u(s) e^{-\frac{\gamma}{\tau} s} K^{\beta}_{\frac{s}{\tau}}* \nabla c_{0}\right] \mathrm{d} s.
\end{equation}

\subsubsection*{Step 2.} \label{step}
We prove that $\mathcal{A}$ and $\mathcal{L}$ satisfy the hypotheses \eqref{definition-C-A} and \eqref{definition-C-L}  of \Cref{lemma-contraction}, respectively. 
For that, we use \cref{Properties-k}\ref{norm-k} along with the following integral estimate: 
\begin{lemma} \label{Lemma-Estimativa-Integral}
    Let $b+1 > 0$, $a+1>0$. Then, the following inequality holds for all $t>0$
    \begin{equation}
        \label{Estimativa-Integral}
        \int_{0}^{t}(t-s)^{a} s^{b} \mathrm{d} s \leq  Ct^{a+b+1},    
    \end{equation}
    where $C$ is a positive constant independent of $t$.
\end{lemma}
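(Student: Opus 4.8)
The plan is to reduce the integral to a $t$-independent constant times $t^{a+b+1}$ by exploiting its scaling structure, and then to recognize that constant as the Beta function, whose finiteness is exactly what the hypotheses guarantee. First I would perform the substitution $s = t\sigma$, $\mathrm{d}s = t\,\mathrm{d}\sigma$, so that $t-s = t(1-\sigma)$ and the integral becomes
\[
\int_0^t (t-s)^a s^b \, \mathrm{d}s = t^{a+b+1} \int_0^1 (1-\sigma)^a \sigma^b \, \mathrm{d}\sigma.
\]
This already isolates the claimed power of $t$ and shows that the remaining factor does not depend on $t$.

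The second step is to argue that $\int_0^1 (1-\sigma)^a \sigma^b \, \mathrm{d}\sigma$ is finite. This is precisely the Beta function $B(b+1, a+1)$, and it is here that the two hypotheses enter: near $\sigma = 0$ the integrand behaves like $\sigma^b$, which is integrable if and only if $b+1 > 0$, while near $\sigma = 1$ it behaves like $(1-\sigma)^a$, integrable if and only if $a+1 > 0$. Setting $C := B(b+1, a+1) < \infty$ then yields the stated bound, in fact as an equality rather than a mere inequality.

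There is no genuine obstacle here; the estimate is elementary, and the whole content is just the convergence check above. If one prefers a fully self-contained argument that avoids naming the Beta function, I would instead split the domain at $t/2$. On $[0, t/2]$ one has $t-s \geq t/2$, so $(t-s)^a \leq 2^{|a|} t^a$ regardless of the sign of $a$, and $\int_0^{t/2} s^b \, \mathrm{d}s$ is controlled by a constant times $t^{b+1}$ using $b+1 > 0$; on $[t/2, t]$ one symmetrically bounds $s^b$ by a constant times $t^b$ and integrates $(t-s)^a$ using $a+1 > 0$. Summing the two contributions gives $C t^{a+b+1}$ with $C$ depending only on $a$ and $b$, which is what is needed in the subsequent estimates of $\mathcal{A}$ and $\mathcal{L}$.
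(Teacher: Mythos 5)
Your proof is correct and follows essentially the same route as the paper: the substitution $s = t\sigma$ isolates the factor $t^{a+b+1}$, and the remaining integral is the Beta function $B(b+1,a+1)$, finite precisely because $a+1>0$ and $b+1>0$. The convergence check and the alternative splitting at $t/2$ are fine additions, but the core argument coincides with the paper's.
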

\begin{proof}
We have 
\begin{align*}
    \int_{0}^{t}(t-s)^{a} s^{b} \mathrm{d} s 
    &=t^{a+b}\int_{0}^{t} \left(1-\frac{s}{t}\right)^{a} \left(\frac{s}{t}\right)^{b} \mathrm{d} s  \\
    &=t^{a+b+1}\int_{0}^{1} \left(1-z\right)^{(1+a)-1} z^{(1+b)-1}  \mathrm{d} z  \\
    &=\frac{\Gamma(a+1)\Gamma(b+1)}{\Gamma(a+b+2)}t^{a+b+1}.
\end{align*}
\end{proof}

\subsubsection*{Step 3.} 
After obtaining estimates for $\|u_1\|_{\mathbf{X}}$, $C_\mathcal{A}$ and $C_\mathcal{L}$, we impose the following conditions
\begin{equation}
    \label{Final-hypotheses}
    0<\delta<\frac{1-2C_\mathcal{L}}{4C_\mathcal{A}} \qquad \text{ and } \qquad \|u_1\|_{\mathbf{X}}<\delta. 
\end{equation}
Under these assumptions, \Cref{lemma-contraction} ensures that equation $\rho = u_1+\mathcal{A}(\rho, \rho)+\mathcal{L}(\rho)$ admits a unique solution in $\mathbf{X}$ such that $\|\rho\|_{\mathbf{X}}\leq 2\delta$.  This concludes the proof.  

As we shall see below, in \textbf{Step 2}, $C_\mathcal{L}$ depends on $\left\|\nabla c_{0}\right\|_{L^{q*}}$ and $\|u_1\|_{\mathbf{X}}<C\|\rho_{0}\|_{L^{p*}}$, for suitable values of $p*$ and $q*$. 
Consequently, the conditions in \eqref{Final-hypotheses} translate into constraints on the initial data $\|\rho_{0}\|_{L^{p*}}$, $\left\|\nabla c_{0}\right\|_{L^{q*}}$, and/or the maximum time $T$ for which the solution is guaranteed to exist. 

\begin{remark}\label{Comments-about-the-Proof}
In \cref{Theorem:local-solutions} and \cref{Theorem:Global-solutions}, specific restrictions are imposed on the parameters of the system and on the parameters related to the space of initial data and solutions (cf. \ref{(LGA1)}, \ref{(LA2)}, \ref{(GA5)}). 
These restrictions are essential because, throughout the proofs of these theorems, certain conditions must be met to ensure that $\mathcal{A}$ and $\mathcal{L}$ are bounded and that $u_1$ satisfies the hypotheses of \Cref{lemma-contraction}, and that the mild solution resides within certain function spaces.
In \cref{Parameters-for-Lebesgue-Spaces}, we present the proof of the existence of $p$ and $r$ satisfying such restrictions and show that they imply those conditions.  
\end{remark}

\begin{remark} The main difference between the proofs of local (\cref{Theorem:local-solutions}) and global (\cref{Theorem:Global-solutions}) well-posedness is the time independence of $C_\mathcal{A}$ and $C_\mathcal{L}$ defined in \eqref{definition-C-A} and \eqref{definition-C-L}, respectively, within the context of the global solution. This arises due to the restrictions on the spaces where the initial condition lies and the subsequent solution exists.
\end{remark}

The next results show that assuming higher spatial regularity in the initial data leads to solutions exhibiting higher spatial regularity. 

\begin{theorem}[Higher Regularity of Local-in-time Solutions]
\label{Theorem:Higher-Regularity-local}
Assume we are under the same hypotheses of \Cref{Theorem:local-solutions}, with $\mathbf X$ redefined as the following Banach space
\begin{equation*}
    \mathbf{X} = \{ u \in C\left([0, T], W^{N,p}(\mathbb{R}^{d})\right) : \|u\|_{\mathbf{X}} \equiv \sup_{t \in[0, T]}\|u(t)\|_{W^{N,p}} < \infty \},
\end{equation*}
with $N$ a positive integer, and assume the initial data satisfy the additional regularity conditions:
\begin{equation*}
    \rho_0 \in W^{N, p}(\mathbb{R}^{d}) \cap L^1(\mathbb{R}^{d}), \quad \text{and} \quad \nabla c_0 \in W^{N, q_0}(\mathbb{R}^{d}),
\end{equation*}
where $q_0 = r$ as in \hyperlink{thm:local(a)}{\textbf{Case (a)}} or $q_0 = \wp$ in as \hyperlink{thm:local(b)}{\textbf{Case (b)}} of \Cref{Theorem:local-solutions}.

Then, we obtain the version of \Cref{Theorem:local-solutions} for higher regularity, \ie there exists $T>0$, depending on the initial conditions, and a unique local mild solution $(\rho, \nabla c)$ to system \eqref{eq-work} in $[0, T]$, such that
\begin{equation*}
    \rho \in C([0, T], W^{N, p}(\mathbb{R}^{d})), \quad \nabla c \in C([0, T], W^{N, r}(\mathbb{R}^{d})).
\end{equation*}
Moreover, the estimates from \hyperlink{thm:local(a)}{\textbf{Case (a)}}, where $q_0 = r$, and \hyperlink{thm:local(b)}{\textbf{Case (b)}}, where $q_0 = \wp$, hold with $L^{v}(\mathbb{R}^{d})$ replaced by $W^{N,v}(\mathbb{R}^{d})$ for all $v \in \{p, r, q, \wp\}$, and $W^{1,r}(\mathbb{R}^{d})$ replaced by $W^{N+1,r}(\mathbb{R}^{d})$ in \ref{thm:local(iii)}. 
\end{theorem}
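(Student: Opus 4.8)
The plan is to rerun the contraction-mapping argument of \Cref{Theorem:local-solutions} essentially verbatim, now in the Sobolev-based Banach space $\mathbf{X}$, the only new ingredient being a Leibniz--Hölder product estimate. I keep the same $u_1$, $\mathcal{A}$, and $\mathcal{L}$ from \eqref{def-u0}, \eqref{def-B}, \eqref{def-L}, reinterpreting them as maps on the new $\mathbf{X}$. Since spatial differentiation commutes with the kernels and with the time integral, $D^{\nu}(K^{\beta}_{t}*f)=K^{\beta}_{t}*D^{\nu}f$ and likewise for $\nabla K^{\alpha}$, every kernel estimate used in the base proof is replaced by its $W^{N,\cdot}$ counterpart from \Cref{norm-k-sobolev-space}. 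The crucial observation is that these Sobolev estimates carry \emph{exactly} the same powers of $t$ (and of $t-s$, $s-w$) as the $L^p$ estimates \eqref{estimativa-1}--\eqref{estimativa-2}; hence, once the nonlinearity is controlled, every time integral is handled by the same application of \Cref{Lemma-Estimativa-Integral} with identical exponents, and the admissibility constraints on $p$ and $r$ are precisely \ref{(LA2)} again, so that no new restriction appears.

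The one genuinely new estimate is the bound on the product inside $\mathcal{A}$. Writing $w(s)=\int_{0}^{s}\tfrac{1}{\tau}e^{\gamma(w-s)/\tau}\nabla K^{\beta}_{(s-w)/\tau}*v(w)\,\mathrm{d}w$ for the reconstructed $\nabla c$, I must bound $\|u(s)\,w(s)\|_{W^{N,m}}$ with $\tfrac{1}{m}=\tfrac{1}{p}+\tfrac{1}{r}$. For a multi-index $\nu$ with $|\nu|\le N$, the Leibniz rule gives $D^{\nu}(u\,w)=\sum_{\mu\le\nu}\binom{\nu}{\mu}D^{\mu}u\,D^{\nu-\mu}w$, and Hölder's inequality with the pair $(p,r)$ yields $\|D^{\mu}u\,D^{\nu-\mu}w\|_{L^m}\le\|D^{\mu}u\|_{L^p}\|D^{\nu-\mu}w\|_{L^r}$. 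Since $|\mu|,|\nu-\mu|\le N$, these factors are controlled by $\|u(s)\|_{W^{N,p}}$ and $\|w(s)\|_{W^{N,r}}$, so summing the finitely many terms gives $\|u(s)\,w(s)\|_{W^{N,m}}\le C\|u(s)\|_{W^{N,p}}\|w(s)\|_{W^{N,r}}$ with the same Hölder pairing as the base case. Note that this uses no Sobolev embedding: every derivative of $u$ is paired in $L^p$ and every derivative of $w$ in $L^r$, exactly mirroring the $\rho\,\nabla c$ estimate underlying \Cref{Theorem:local-solutions}. The identical reasoning bounds the product in $\mathcal{L}$, with $w$ replaced by $e^{-\gamma s/\tau}K^{\beta}_{s/\tau}*\nabla c_0$.

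With the product estimate in hand, bounding $\|w(s)\|_{W^{N,r}}$ by \Cref{norm-k-sobolev-space} (mapping $W^{N,p}\to W^{N,r}$ with the $\nabla K^{\beta}$ gain) and then convolving $u\,w$ with $\nabla K^{\alpha}$ to return to $W^{N,p}$ reproduces the bilinear and linear bounds \eqref{definition-C-A}--\eqref{definition-C-L} with the same $T$-dependence, while $\|u_1\|_{\mathbf{X}}\le C\|\rho_0\|_{W^{N,p}}$ follows from \Cref{norm-k-sobolev-space} with $q=p$. Applying \Cref{lemma-contraction} exactly as in \textbf{Step 3} then yields the unique fixed point $\rho\in C([0,T],W^{N,p})$; the small-data threshold in \hyperlink{thm:local(b)-1}{\textbf{(b.1)}} is unchanged because the exponent bookkeeping is unchanged. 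The estimates on $\nabla c$ are finally recovered by inserting this $\rho$ into \eqref{def-eq-c} and applying the Sobolev kernel bounds, which upgrades $\nabla c$ to $W^{N,r}$ (and to $W^{N,q}$ in item \ref{thm:local(vi)}); integrating once more via the gradient structure of \eqref{def-eq-c} gives $W^{N+1,r}$ regularity for $c$ in item \ref{thm:local(iii)}, and items \ref{thm:local(i)}, \ref{thm:local(iv)}, \ref{thm:local(v)} transfer unchanged.

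I expect the only real care to be in the product estimate: one must check that the Leibniz expansion distributes so that every resulting term matches the single Hölder pairing $\tfrac{1}{m}=\tfrac{1}{p}+\tfrac{1}{r}$ used in \Cref{Theorem:local-solutions}, so that the time-power exponents, and hence the admissibility set \ref{(LA2)}, are preserved and no new condition on $p$ and $r$ is introduced.
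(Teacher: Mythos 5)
Your proposal is correct and follows essentially the same route as the paper: the paper's proof also reruns the fixed-point argument of \Cref{Theorem:local-solutions} in $C([0,T],W^{N,p}(\mathbb{R}^d))$, invoking \Cref{norm-k-sobolev-space} for the kernel bounds (with identical time powers) together with the product estimate $\|\varphi\psi\|_{W^{N,q}}\leq C\|\varphi\|_{W^{N,p}}\|\psi\|_{W^{N,r}}$ for $1/q=1/p+1/r\leq 1$. The only difference is that the paper simply asserts this product estimate, while you supply its short Leibniz--H\"older justification, which is a welcome addition rather than a deviation.
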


\begin{proof}
We replace $C([0, T], L^{p}(\mathbb{R}^{d}))$ with $C([0, T], W^{N, p}(\mathbb{R}^{d}))$ in the fixed-point framework used in \Cref{Theorem:local-solutions}. By using  \Cref{norm-k-sobolev-space} and the fact that in the Sobolev spaces, assuming $\varphi \in W^{N, p}(\mathbb{R}^{d})$ and $ \psi \in W^{N, r}(\mathbb{R}^{d})$, where $1/p+1/r=1/q \leq 1$, we have $ \varphi \psi \in W^{N, q}(\mathbb{R}^{d})$ and 
\begin{equation*}
    \left\| \varphi \psi\right\|_{W^{N,q}} \leq C \left\| \varphi \right\|_{W^{N,p}} \left\|  \psi \right\|_{W^{N,r}}
\end{equation*}
we can easily adapt the proof of \Cref{Theorem:local-solutions} to this higher regularity setting. 
\end{proof}
\begin{theorem}[Higher Regularity of Global-in-time Solutions]
\label{Theorem:Higher-Regularity-global}
Under the hypotheses of \Cref{Theorem:Global-solutions}, redefine the Banach space $\mathbf{X}$ to be
\begin{equation*}
    \mathbf{X}=\{u \in C\hspace{-0.1cm}\left([0, \infty), W^{N,p}(\mathbb{R}^{d})\right): \|u\|_{\mathbf{X}} \equiv \underset{t>0}{sup} \; t^{\sigma} \left\|u (t) \right\|_{W^{N,p}} < \infty \}.
\end{equation*}
Moreover, assume the initial data satisfy the additional regularity conditions:
\begin{equation*}
    \rho_0 \in W^{N, p_1}(\mathbb{R}^{d}), \quad \text{and} \quad \nabla c_0 \in W^{N, p_2}(\mathbb{R}^{d}),
\end{equation*}

Then, the conclusion of \Cref{Theorem:Global-solutions} remains valid in this higher-regularity setting, \ie there exists a unique global-in-time mild solution $(\rho, \nabla c)$ satisfying
\begin{equation*}
    \rho \in C\big([0, \infty), W^{N, p}(\mathbb{R}^{d})\big), \quad \nabla c \in C\big([0, \infty), W^{N, r}(\mathbb{R}^{d})\big),
\end{equation*}
with the corresponding decay estimates \eqref{global-estimate-rho} and \eqref{global-estimate-c} where the norms in $L^{v}(\mathbb{R}^{d})$ are replaced by $W^{N,v}(\mathbb{R}^{d})$ for all $v \in \{p, r, q, p_1,p_2\}$.
\end{theorem}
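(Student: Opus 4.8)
The plan is to mirror the argument for \Cref{Theorem:Higher-Regularity-local}, replacing the role of \Cref{Theorem:local-solutions} with that of \Cref{Theorem:Global-solutions}: I would re-run the fixed-point scheme of \Cref{subsec:Overall-idea} verbatim, but with $C([0,\infty),L^p(\mathbb{R}^d))$ replaced by $C([0,\infty),W^{N,p}(\mathbb{R}^d))$ and with the same time weight $t^\sigma$, $\sigma$ as in \eqref{definition-sigma}. The maps $u_1$, $\mathcal A$, and $\mathcal L$ of \eqref{def-u0}, \eqref{def-B}, \eqref{def-L} are unchanged as formulas; only the norm in which they are estimated changes. The goal is to check that $u_1$, $\mathcal A$, and $\mathcal L$ satisfy the hypotheses of \Cref{lemma-contraction} in the new space with \emph{time-independent} constants, and then read off the corresponding decay estimates.

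For the estimates themselves I would rely on two ingredients. First, the Sobolev kernel bounds in \Cref{norm-k-sobolev-space} carry exactly the same powers of $t$ as the $L^p$ bounds \eqref{estimativa-1}--\eqref{estimativa-2} of \Cref{Properties-k}, since they are obtained by applying the latter to $D^\nu\varphi$ for $|\nu|\le N$. Second, the Sobolev product inequality $\|\varphi\psi\|_{W^{N,q}}\le C\|\varphi\|_{W^{N,p}}\|\psi\|_{W^{N,r}}$, valid whenever $\tfrac1p+\tfrac1r=\tfrac1q\le 1$, plays the role that H\"older's inequality played in the $L^p$ proof and requires exactly the same relations among the exponents. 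Consequently, every estimate in the proof of \Cref{Theorem:Global-solutions} transcribes line for line with $L^v$ replaced by $W^{N,v}$: the bound $\|u_1\|_{\mathbf X}\le C\|\rho_0\|_{W^{N,p_1}}$, the bilinear bound $C_{\mathcal A}$, and the linear bound $C_{\mathcal L}$ depending on $\|\nabla c_0\|_{W^{N,p_2}}$ are each produced by the same combination of \Cref{norm-k-sobolev-space}, the product inequality, and the integral estimate \Cref{Lemma-Estimativa-Integral}.

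The decisive point, and the only place where the global case differs qualitatively from the local one, is the time-independence of $C_{\mathcal A}$ and $C_{\mathcal L}$. Because the temporal scaling in \Cref{norm-k-sobolev-space} is identical to that in \Cref{Properties-k}, the exponent bookkeeping that made the powers of $t$ cancel against $t^\sigma$ (so that \Cref{Lemma-Estimativa-Integral} returns a pure constant rather than a positive power of $t$) is unaffected; the algebraic identities among $p$, $r$, $\sigma$, $p_1$, $p_2$ guaranteed by \ref{(LGA1)}, \ref{(LA2)}, \ref{(GA5)} are precisely those used before. Hence, under the smallness assumption $\|\rho_0\|_{W^{N,p_1}}+\|\nabla c_0\|_{W^{N,p_2}}<\epsilon$, \Cref{lemma-contraction} yields a unique fixed point $\rho\in\mathbf X$, the companion $\nabla c$ is recovered from \eqref{def-eq-c} in $C((0,\infty),W^{N,r}(\mathbb{R}^d))$, and \eqref{global-estimate-rho}--\eqref{global-estimate-c} hold with the Sobolev norms in place of the Lebesgue ones.

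I expect the main obstacle to be organizational rather than analytical: one must confirm that the single new tool, the $W^{N,q}$ product inequality, is compatible with the time weights at \emph{every} occurrence of a product in $\mathcal A$ and $\mathcal L$, and that invoking it never tightens the admissible range of $(p,r)$ beyond what \ref{(LA2)} and \ref{(GA5)} already impose. Since the product inequality shares the H\"older exponent relation and the kernel estimates share the temporal scaling, no new constraint appears, and the verification reduces to repeating the bookkeeping of \Cref{Theorem:Global-solutions} with the regularity index $N$ carried along inertly.
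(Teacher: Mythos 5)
Your proposal is correct and follows essentially the same route as the paper: the paper's own (very brief) proof likewise adapts the fixed-point argument of \Cref{Theorem:Global-solutions} to the weighted space $C([0,\infty),W^{N,p}(\mathbb{R}^d))$, invoking exactly your two ingredients, namely the Sobolev kernel estimates of \Cref{norm-k-sobolev-space} and the product inequality $\|\varphi\psi\|_{W^{N,q}}\leq C\|\varphi\|_{W^{N,p}}\|\psi\|_{W^{N,r}}$ for $1/p+1/r=1/q\leq 1$. Your additional observation that the temporal scalings (and hence the time-independence of $C_{\mathcal A}$ and $C_{\mathcal L}$) are unaffected by passing to Sobolev norms is precisely the bookkeeping the paper leaves implicit.
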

\begin{proof}
Again, the proof follows by adapting the arguments of \Cref{Theorem:local-solutions} to this higher-regularity setting. This relies on \Cref{norm-k-sobolev-space} and the Sobolev algebra property (for $\varphi \in W^{N, p}(\mathbb{R}^{d})$ and $\psi \in W^{N, r}(\mathbb{R}^{d})$ satisfying $1/p + 1/r = 1/q \leq 1$, we have  $ \varphi \psi \in W^{N, q}(\mathbb{R}^{d})$ and  $\left\| \varphi \psi\right\|_{W^{N,q}} \leq C \left\| \varphi \right\|_{W^{N,p}} \left\|  \psi \right\|_{W^{N,r}}$). 
\end{proof}

\subsection{Local well-posedness} 
\label{subsec:Local-existence} 

\begin{proof}[Proof of \Cref{Theorem:local-solutions}]
As described in \Cref{subsec:Overall-idea}, we establish the existence and uniqueness of local solution to system \eqref{eq-work} by applying \textbf{Step 1, 2} and \textbf{3} with $\mathbf X$ being the Banach space $C\left([0, T], L^{p}(\mathbb{R}^{d})\right)$ equipped with the usual norm.  We then verify that the hypotheses of  \Cref{lemma-contraction} are satisfied. 
For that, we employ \Cref{Properties-k}\ref{norm-k}, \cref{Lemma-Estimativa-Integral,lemma-conditions-geral-global,lemma-condition-initial-local},  and for $T>0$ (to be chosen later depending on the initial data), we obtain the following estimates: 
\begin{align} 
    &\|u_1\|_{\mathbf{X}} \leq C_1\|\rho_{0}\|_{\mathbf{X}}, \label{Estimate-u0-local}\\
    &\|\mathcal{A}(u,v)\|_{\mathbf{X}} \leq C_2 T^{1-\frac{1}{\alpha}\left(\frac{d}{r}+1\right)} \left\| u\right\|_{\mathbf{X}} \|v\|_{\mathbf{X}}, \; \text{for } \; \gamma > 0, \label{Estimate-B-local} \\
    &\|\mathcal{A}(u,v)\|_{\mathbf{X}} \leq C_2 T^{2-\frac{1}{\alpha}\left(\frac{d}{r}+1\right)-\frac{d}{\beta}\left(\frac{1}{p}-\frac{1}{r}\right)-\frac{1}{\beta}} \left\| u\right\|_{\mathbf{X}} \|v\|_{\mathbf{X}}, \; \text{for } \; \gamma= 0 \label{Estimate-B-GAMMA-0} \\
    & \left\|\mathcal{L}(u) \right\|_{\mathbf{X}} \leq C_3 T^{1-\frac{1}{\alpha}\left(\frac{d}{r}+1\right)-\frac{d}{\beta}\left(\frac{1}{\wp}-\frac{1}{r}\right)}\left\|\nabla c_{0}\right\|_{L^{\wp}} \left\|u\right\|_{\mathbf{X}}, \quad \text{for } \; \gamma \geq 0, \label{Estimate-L-local} 
\end{align}
where $p$ and $r$ are as presented in \ref{(LA2)}, $\wp$ equals $r$ for \hyperlink{thm:local(a)}{\textbf{Case (a)}} and, for \hyperlink{thm:local(b)}{\textbf{Case (b)}}, $\wp$ is given by \eqref{codition-r-0-Theorem}. The constants $C_1$, $C_2$, and $C_3$ depend on specific parameters, as follows: $C_1=C_1(\alpha, p,d )$, $C_2=C_2(\alpha, \beta, p, r, d)$, $C_3=C_3(\alpha, \beta, p, r, \wp, d)$. 

Since the derivations of these estimates follow closely the steps that are going to be used below to prove \eqref{local-estimate-c0} and \cref{thm:local(i)}, we omit their proofs. 
However, we emphasize that to establish \eqref{Estimate-B-local} and \eqref{Estimate-B-GAMMA-0}, the parameters $p$ and $r$ must satisfy the conditions stated in \eqref{condition-geral}, which is guaranteed by \cref{lemma-conditions-geral-global}. Similarly, to prove \eqref{Estimate-L-local}, $r$ must fulfill the conditions \eqref{condition-geral(a)} and \eqref{condition-geral(c)}, while $\wp$ must meet the requirements specified in \eqref{condicao-L-unica}, which is guaranteed by \cref{lemma-conditions-geral-global,lemma-condition-initial-local}.  

From \eqref{Estimate-u0-local} and \eqref{Estimate-B-local}, we can set the constants $C_{\mathcal{A}}=C_2T^{\vartheta_2}$ and $C_{\mathcal{L}}=C_3\left\|\nabla c_{0}\right\|_{L^{q}} T^{\vartheta_1}$, respectively, where $q=r$ for \hyperlink{thm:local(a)}{\textbf{Case (a)}} and $q=\wp$ for \hyperlink{thm:local(b)}{\textbf{Case (b)}}, 
$C_2$ and $C_3$ are constants depending on $\alpha$, $\beta$, $q$, $r$ and $p$, $T>0$, and $\vartheta_2\geq \vartheta_1\geq 0$ are  given by 
\begin{equation*}
    \vartheta_1= 1-\frac{1}{\alpha}\left(\frac{d}{r}+1\right)-\frac{d}{\beta}\left(\frac{1}{\wp}-\frac{1}{r}\right) 
\end{equation*}
and
\begin{equation*}
    \vartheta_2 =\begin{cases}
        1-\frac{1}{\alpha}\left(\frac{d}{r}+1\right), & \; \text{if } \; \gamma > 0, \\[8pt]
        2-\frac{1}{\alpha}\left(\frac{d}{r}+1\right)-\frac{d}{\beta}\left(\frac{1}{p}-\frac{1}{r}\right)-\frac{1}{\beta}, & \; \text{if } \; \gamma= 0 
    \end{cases}
\end{equation*}
Additionally, if condition \eqref{codition-r-0-Theorem2} is satisfied, then $\vartheta_2>0$ and $\vartheta_1=0$; otherwise, $\vartheta_2\geq \vartheta_1>0$. 

Next, set $\delta=C_1\|\rho_{0}\|_{\mathbf{X}}>0$ and notice that, by \eqref{Estimate-u0-local}, $\|u_1\|_{\mathbf{X}} \leq \delta$.
Thus, to fall into the hypotheses of \Cref{lemma-contraction}, $F(T)$ defined as  
\begin{equation*}
    F(T)=\frac{1-2C_\mathcal{L}}{4C_\mathcal{A}}=\frac{1-2C_3\left\|\nabla c_{0}\right\|_{L^{q}} T^{\vartheta_1}}{4C_2T^{\vartheta_2}} 
\end{equation*}
must satisfy $\delta <F(T)$, \ie $F(T)>C_1\|\rho_{0}\|_{L^{p}}$. 
%
Note first that, since it is required that $F(T)>0$, for $\vartheta_1>0$ we have 
\begin{equation*}
    T<T_0 \equiv \left(\frac{1}{2C_3\left\|\nabla c_{0}\right\|_{L^{q}}}\right)^{1/\vartheta_1}.
\end{equation*}
Alternatively, for $\vartheta_1=0$, $F(T)>0$ implies that $1-2C_3\left\|\nabla c_{0}\right\|_{L^{q}}>0$. Then, it must exist $\epsilon>0$ such that this condition is met if $\left\|\nabla c_{0}\right\|_{L^{q}}<\epsilon$. In that case, $T_0=\infty$.  
%
In addition, since $T>0$, $F(T)$ is a continuous function on $T$ and, as
\begin{equation*}
    \displaystyle \lim_{ T \to 0} F(T) = \infty \quad \text{ and } \quad \displaystyle \lim_{ T \to T_0} F(T) = 0,
\end{equation*}
by the intermediate value theorem, for any nontrivial $\rho_0 \in L^{p}(\mathbb{R}^{d})$ 
there is $T^{*} \in (0,T_0)$ such that $F(T^{*})>C_1\|\rho_{0}\|_{L^{p}}$.  
Hence, choosing this value of $T$, we prove the local existence of solutions by applying \Cref{lemma-contraction}. Notice that $T^{*}$ depends on the values of $\|\rho_{0}\|_{L^{p}}$ and $\left\|\nabla c_{0}\right\|_{L^{q}}$. Moreover, as a conclusion of \Cref{lemma-contraction}, we obtain (\ref{local-estimate-rho}). Then, as  $\rho$ is given by \eqref{mild-solution-rho},  $\rho \in C\hspace{-0.1cm}\left([0, T], L^{p}(\mathbb{R}^{d})\right)$ (renaming $T^{*}$ by $T$). 

Throughout this proof, consider the parameter $\wp$, by abuse of notation, where $\nabla c_0 \in L^{\wp}(\mathbb{R}^{d})$, such that $\wp\leq r$, with $r$ defined as in \ref{(LA2)}. Note that for any $\alpha$ and $\beta$,  setting $\wp= r$ places us in \hyperlink{thm:local(a)}{\textbf{Case (a)}}, while \hyperlink{thm:local(b)}{\textbf{Case (b)}} corresponds to $\wp \in \left[d/(\alpha-1), r \right)$ for $\alpha \leq \beta$.

To prove the estimate of $\nabla c$, that is, \eqref{local-estimate-c0} for \hyperlink{thm:local(a)}{\textbf{Case (a)}}, and \eqref{local-estimate-c0-2} for \hyperlink{thm:local(b)}{\textbf{Case (b)}}, we consider the integral formulation \eqref{def-eq-c}. Then,
\begin{equation} 
    \label{nabla-c-1}
    \|\nabla c(\cdot, t)\|_{L^{r}} \leq  e^{-\frac{\gamma}{\tau} t}\left\|K^{\beta}_{\frac{t}{\tau}} * \nabla c_{0}\right\|_{L^{r}}+\int_{0}^{t} \frac{1}{\tau} e^{\frac{\gamma}{\tau} (s-t)}\left\|\nabla K^{\beta}_{\frac{t-s}{\tau}} * \rho(s)\right\|_{L^{r}} \mathrm{d} s, 
\end{equation}
and applying  \eqref{estimativa-1} and  \eqref{estimativa-2} to \eqref{nabla-c-1}, we get 
\begin{equation*}
    \begin{split}
        \|\nabla c(\cdot, t)\|_{L^{r}}    
        &\leq C e^{-\frac{\gamma}{\tau}t}\left(\frac{t}{\tau}\right)^{-\frac{d}{\beta}\left(\frac{1}{\wp}-\frac{1}{r}\right)}\left\|\nabla c_{0}\right\|_{L^{\wp}} +C \int_{0}^{t} \frac{1}{\tau} e^{\frac{\gamma}{\tau} (s-t)}\left(\frac{t-s}{\tau}\right)^{-\frac{d}{\beta}\left(\frac{1}{p}-\frac{1}{r}\right)-\frac{1}{\beta}} \|\rho(s)\|_{L^{p}} \mathrm{d} s \\
        &\leq C e^{-\frac{\gamma}{\tau}t}\left(\frac{t}{\tau}\right)^{-\frac{d}{\beta}\left(\frac{1}{\wp}-\frac{1}{r}\right)}\left\|\nabla c_{0}\right\|_{L^{\wp}} +C  \left(\int_{0}^{t} \frac{1}{\tau} e^{\frac{\gamma}{\tau} (s-t)}\left(\frac{t-s}{\tau}\right)^{-\frac{d}{\beta}\left(\frac{1}{p}-\frac{1}{r}\right)-\frac{1}{\beta}}  \mathrm{d} s\right) \left\|\rho \right\|_{\mathbf{X}}.
    \end{split}
\end{equation*}

Notice that hypothesis \ref{(LA2)} implies $\frac{d}{\beta}\left(\frac{1}{p}-\frac{1}{r}\right) <1$ (see \Cref{lemma-conditions-geral-global}). Hence, if $\gamma>0$, the integral in the last term above satisfies
\begin{equation*}
    \int_{0}^{t} \frac{1}{\tau} e^{\frac{\gamma}{\tau} (s-t)}\left(\frac{t-s}{\tau}\right)^{-\frac{d}{\beta}\left(\frac{1}{p}-\frac{1}{r}\right)-\frac{1}{\beta}}  \mathrm{d} s \leq C \Gamma \left(1-\frac{d}{\beta}\left(\frac{1}{p}-\frac{1}{r}\right)-\frac{1}{\beta}\right) < \infty,
\end{equation*}
and we obtain 
\[\|\nabla c(\cdot, t)\|_{L^{r}} \leq C t^{-\frac{d}{\beta}\left(\frac{1}{\wp}-\frac{1}{r}\right)}\left\|\nabla c_{0}\right\|_{L^{\wp}}+ C  \left\|\rho \right\|_{\mathbf{X}}.\]
Therefore, 
\begin{equation*}
    \begin{split}
        t^{1-\frac{1}{\alpha}\left(\frac{d}{r}+1 \right)} \|\nabla c(\cdot, t)\|_{L^{r}} &\leq C t^{1-\frac{1}{\alpha}\left(\frac{d}{r}+1 \right)-\frac{d}{\beta}\left(\frac{1}{\wp}-\frac{1}{r}\right)}\left\|\nabla c_{0}\right\|_{L^{\wp}}+ C t^{1-\frac{1}{\alpha}\left(\frac{d}{r}+1 \right)}  \left\|\rho \right\|_{\mathbf{X}}\\ 
        &\leq C T^{1-\frac{1}{\alpha}\left(\frac{d}{r}+1 \right)-\frac{d}{\beta}\left(\frac{1}{\wp}-\frac{1}{r}\right)}\left\|\nabla c_{0}\right\|_{L^{\wp}}+2 C T^{1-\frac{1}{\alpha}\left(\frac{d}{r}+1 \right)}  \|\rho_{0}\|_{L^{p}} \\
        & <C\left(T,\; \|\rho_{0}\|_{L^{p}}, \;\left\|\nabla c_{0}\right\|_{L^{\wp}} \right),
    \end{split}
\end{equation*}
since $\frac{1}{\alpha}\left(\frac{d}{r}+1 \right)+\frac{d}{\beta}\left(\frac{1}{\wp}-\frac{1}{r}\right)\leq 1$, which follows from the definition of $r$ and by setting $\wp= r$ (\hyperlink{thm:local(a)}{\textbf{Case (a)}}) or $\wp \in \left[d/(\alpha-1), r \right)$ (\hyperlink{thm:local(b)}{\textbf{Case (b)}}) (see \Cref{lemma-condition-initial-local}). 
On the other hand, if $\gamma=0$, we have
\begin{equation*}
    \begin{split}
        \int_{0}^{t} \frac{1}{\tau} \left(t-s\right)^{-\frac{d}{\beta}\left(\frac{1}{p}-\frac{1}{r}\right)-\frac{1}{\beta}}  \mathrm{d} s & \leq C t^{1-\frac{d}{\beta}\left(\frac{1}{p}-\frac{1}{r}\right)-\frac{1}{\beta}},
    \end{split}
\end{equation*}
and, consequently, 
\[\|\nabla c(\cdot, t)\|_{L^{r}} \leq C t^{-\frac{d}{\beta}\left(\frac{1}{\wp}-\frac{1}{r}\right)}\left\|\nabla c_{0}\right\|_{L^{\wp}}+ C t^{1-\frac{d}{\beta}\left(\frac{1}{p}-\frac{1}{r}\right)-\frac{1}{\beta}}  \left\|\rho \right\|_{\mathbf{X}}.\] 

Thus, similarly to above, using that $t\leq T$ and $\ \left\|\rho \right\|_{\mathbf{X}}\leq \left\|\rho_{0}\right\|_{L^{p}}$, we obtain 
\begin{equation*}
t^{1-\frac{1}{\alpha}\left(\frac{d}{r}+1 \right)} \|\nabla c(\cdot, t)\|_{L^{r}}\leq C\left(T,\|\rho_{0}\|_{L^{p}}, \left\|\nabla c_{0}\right\|_{L^{\wp}} \right)
\end{equation*}
since $\frac{1}{\alpha}\left(\frac{d}{r}+1 \right)+\frac{d}{\beta}\left(\frac{1}{\wp}-\frac{1}{r}\right)\leq 1$ (see \Cref{lemma-condition-initial-local}).
%
Thus, considering $\wp=r$ or $\wp$ as defined in \eqref{codition-r-0-Theorem}, estimates \eqref{local-estimate-c0} and \eqref{local-estimate-c0-2}, respectively, hold. 

\textbf{\textit{Proof of \ref{thm:local(i)}:}} Starting from \eqref{mild-solution-rho}, and using
\Cref{Properties-k}\ref{norm-k} and H\"older's inequality, we obtain for $t>0$ and $1 \leq \left( \frac{r}{p+r}\right) p \leq p_1<p$, 
\begin{equation}
   \label{rho-p1}
    \|\rho(t)\|_{L^{p_1}}
    \leq \|\rho_{0}\|_{L^{p_1}} +  C \int_{0}^{t}(t-s)^{-\frac{d}{\alpha}\left(\frac{1}{p}+\frac{1}{r}-\frac{1}{p_1}\right)-\frac{1}{\alpha}} \left\|\rho(s)\right\|_{L^{p}} \left\|\nabla c(s) \right\|_{L^{r}} \mathrm{d} s.
\end{equation}

Next, we can apply \Cref{Lemma-Estimativa-Integral} and use estimate \eqref{local-estimate-c0} for \hyperlink{thm:local(a)}{\textbf{Case (a)}} and \eqref{local-estimate-c0-2} for \hyperlink{thm:local(b)}{\textbf{Case (b)}}, since 
\begin{equation*}
    \frac{d}{\alpha}\left(\frac{1}{p}+\frac{1}{r}-\frac{1}{p_1}\right)+\frac{1}{\alpha} <1, \quad  \frac{1}{\alpha}\left(\frac{d}{r}+1\right)>0, \quad \text{ and } \quad \frac{d}{\alpha}\left(\frac{1}{p_1}-\frac{1}{p}\right)>0,
\end{equation*}
to obtain
\begin{equation*}
    \begin{split}
         &\int_{0}^{t} (t-s)^{-\frac{d}{\alpha}\left(\frac{1}{p}+\frac{1}{r}-\frac{1}{p_1}\right)-\frac{1}{\alpha}}   \left\|\rho(s)\right\|_{L^{p}} \left\|\nabla c(s) \right\|_{L^{r}}  \mathrm{d} s \\
        &\hspace{3cm} \leq \int_{0}^{t}(t-s)^{-\frac{d}{\alpha}\left(\frac{1}{p}+\frac{1}{r}-\frac{1}{p_1}\right)-\frac{1}{\alpha}} s^{-1+\frac{1}{\alpha}\left(\frac{d}{r}+1\right)} \left\|\rho(s)\right\|_{L^{p}} s^{1-\frac{1}{\alpha}\left(\frac{d}{r}+1\right)} \left\|\nabla c(s) \right\|_{L^{r}} \mathrm{d} s \\
        &\hspace{3cm}  \leq C \left(\int_{0}^{t}(t-s)^{-\frac{d}{\alpha}\left(\frac{1}{p}+\frac{1}{r}-\frac{1}{p_1}\right)-\frac{1}{\alpha}}  s^{-1+\frac{1}{\alpha}\left(\frac{d}{r}+1\right)}\mathrm{d} s\right)   \left\|\rho_0 \right\|_{L^{p}} \left( \left\|\rho_0 \right\|_{L^{p}}+\left\| \nabla c_0 \right\|_{L^{\wp}}  \right)    \\
         &\hspace{3cm} \leq  C T^{\frac{d}{\alpha}\left(\frac{1}{p_1}-\frac{1}{p}\right)}   \left\|\rho_0 \right\|_{L^{p}} \left( \left\|\rho_0 \right\|_{L^{p}}+\left\| \nabla c_0 \right\|_{L^{\wp}}  \right), 
    \end{split}
\end{equation*}
where $\wp=r$ for \hyperlink{thm:local(a)}{\textbf{Case (a)}}, $\wp$ is in the range defined by \eqref{codition-r-0-Theorem} for \hyperlink{thm:local(b)}{\textbf{Case (b)}}, and the constant $C$ depends on $T$. Thus, from \eqref{rho-p1}, we have  
\begin{equation}
    \label{rho-p1-2}
    \|\rho(t)\|_{L^{p_1}} <C\left(T,\|\rho_{0}\|_{L^{p}}, \|\rho_{0}\|_{L^{p_1}},  \left\|\nabla c_{0}\right\|_{L^{\wp}} \right).
\end{equation}

Note that if $1/p+1/r=1$, we can take $p_1=1$. Otherwise, we have $1/p+1/r<1$, and we can recalculate \eqref{rho-p1} using the new information given in \eqref{rho-p1-2}. 
Specifically, we can recalculate  \eqref{rho-p1} for $p_2\geq 1$ such that $\left( \frac{r}{p_1+r}\right) p_{1} \leq p_{2}<p_1$ using \eqref{rho-p1-2}, and continue iteratively for  $p_n \geq 1$ such that $\left( \frac{r}{p_{n-1}+r}\right) p_{n-1} \leq p_{n}<p_{n-1}$, using 
\begin{equation*}
    \|\rho(t)\|_{L^{p_n}} <C\left(T, \|\rho_{0}\|_{L^{p}}, \|\rho_{0}\|_{L^{p_1}}, \|\rho_{0}\|_{L^{p_2}},  \cdots ,  \|\rho_{0}\|_{L^{p_n}}, \left\|\nabla c_{0}\right\|_{L^{\wp}} \right),
\end{equation*}
until $\left( \frac{r}{p_{n}+r}\right) p_{n} \leq 1$, at which point we can choose $p_{n+1}=1$.  
Then, we conclude that $\rho \in  L^{\infty}((0,T);L^1(\mathbb{R}^d)\cap L^{p}(\mathbb{R}^d))$. 

Next, to prove the mass conservation, note that $(\rho \nabla c) (\cdot, t) \in L^{1}(\mathbb{R}^d)$ for every $t \in [0,T]$. Indeed, with the above calculation, we can choose $p_1$ so that $\rho \in  L^{p_1}(\mathbb{R}^d)$ and $1/p_1+1/r=1$, then $\left\|(\rho \nabla c)(\cdot, t) \right\|_{L^{1}} \leq C \left\|\rho (\cdot, t) \right\|_{L^{p_1}}\left\|\nabla c (\cdot, t) \right\|_{L^{r}} <\infty$. 
Then, by integrating both sides of equation \eqref{mild-solution-rho} with respect to $x$ and applying \Cref{integral-k2}, since $\rho_0 \in L^1(\mathbb{R}^d)$,  $\rho(\cdot,t) \in L^1(\mathbb{R}^d)$ and $\rho\nabla c(\cdot,t) \in L^1(\mathbb{R}^d)$ for every $t \in [0,T]$, we establish that $\displaystyle \int_{\mathbb{R}^d} \rho (x, t) \mathrm{d} x=\int_{\mathbb{R}^d} \rho_0(x) \mathrm{d} x$ for every $t \in [0,T]$. Therefore, we conclude the proof of \ref{thm:local(i)}. 

\textbf{\textit{Proof of \ref{thm:local(iii)}:}} 
Analogously to the
way we proved the behavior of $ \nabla c$ and obtained estimates \eqref{local-estimate-c0} and (\ref{local-estimate-c0-2}), we now prove the estimate of $ c$. 
From integral formulation, as $\wp\leq r$, with $r$ satisfying \ref{(LA2)},  we obtain 
\begin{equation*}
    \begin{split}
        \| c(\cdot, t)\|_{L^{r}} &\leq  e^{-\frac{\gamma}{\tau} t}\left\|K^{\beta}_{\frac{t}{\tau}} *  c_{0}\right\|_{L^{r}}+\int_{0}^{t} \frac{1}{\tau} e^{\frac{\gamma}{\tau} (s-t)}\left\| K^{\beta}_{\frac{t-s}{\tau}} * \rho(s)\right\|_{L^{r}} \mathrm{d} s \\
        &\leq C e^{-\frac{\gamma}{\tau}t}\left(\frac{t}{\tau}\right)^{-\frac{d}{\beta}\left(\frac{1}{\wp}-\frac{1}{r}\right)}\left\| c_{0}\right\|_{L^{\wp}}+C \int_{0}^{t} \frac{1}{\tau} e^{\frac{\gamma}{\tau} (s-t)}\left(\frac{t-s}{\tau}\right)^{-\frac{d}{\beta}\left(\frac{1}{p}-\frac{1}{r}\right)} \|\rho(s)\|_{L^{p}} \mathrm{d} s \\
        &\leq C e^{-\frac{\gamma}{\tau}t}\left(\frac{t}{\tau}\right)^{-\frac{d}{\beta}\left(\frac{1}{\wp}-\frac{1}{r}\right)}\left\| c_{0}\right\|_{L^{\wp}}+C  \left(\int_{0}^{t} \frac{1}{\tau} e^{\frac{\gamma}{\tau} (s-t)}\left(\frac{t-s}{\tau}\right)^{-\frac{d}{\beta}\left(\frac{1}{p}-\frac{1}{r}\right)}  \mathrm{d} s\right) \left\|\rho \right\|_{\mathbf{X}}.
    \end{split}
\end{equation*}

Then, if $\gamma>0$, the integral in the last term above satisfies
\begin{equation*}
    \int_{0}^{t} \frac{1}{\tau} e^{\frac{\gamma}{\tau} (s-t)}\left(\frac{t-s}{\tau}\right)^{-\frac{d}{\beta}\left(\frac{1}{p}-\frac{1}{r}\right)}  \mathrm{d} s \leq C \Gamma \left(1-\frac{d}{\beta}\left(\frac{1}{p}-\frac{1}{r}\right)\right),
\end{equation*}
since $ \frac{d}{\beta}\left(\frac{1}{p}-\frac{1}{r}\right)+\frac{1}{\beta} <1$, and we obtain
\[\| c(\cdot, t)\|_{L^{r}} \leq C t^{-\frac{d}{\beta}\left(\frac{1}{\wp}-\frac{1}{r}\right)}\left\| c_{0}\right\|_{L^{\wp}}+ C  \left\|\rho \right\|_{\mathbf{X}}.\] 
Hence,
\begin{equation*}
    \begin{split}
        t^{1-\frac{1}{\alpha}\left(\frac{d}{r}+1 \right)} \| c(\cdot, t)\|_{L^{r}} &\leq C t^{1-\frac{1}{\alpha}\left(\frac{d}{r}+1 \right)-\frac{d}{\beta}\left(\frac{1}{\wp}-\frac{1}{r}\right)}\left\| c_{0}\right\|_{L^{\wp}}+ C t^{1-\frac{1}{\alpha}\left(\frac{d}{r}+1 \right)}  \left\|\rho \right\|_{\mathbf{X}}\\
        &\leq C T^{1-\frac{1}{\alpha}\left(\frac{d}{r}+1 \right)-\frac{d}{\beta}\left(\frac{1}{\wp}-\frac{1}{r}\right)}\left\| c_{0}\right\|_{L^{\wp}}+C T^{1-\frac{1}{\alpha}\left(\frac{d}{r}+1 \right)}  \|\rho_{0}\|_{L^{p}} \\
        & <C\left(T,\; \|\rho_{0}\|_{L^{p}}, \;\left\| c_{0}\right\|_{L^{\wp}} \right).
    \end{split}
\end{equation*}

On the other hand, if $\gamma=0$, we have
\begin{equation*}
    \begin{split}
        \int_{0}^{t} \frac{1}{\tau} \left(t-s\right)^{-\frac{d}{\beta}\left(\frac{1}{p}-\frac{1}{r}\right)}  \mathrm{d} s & \leq C t^{1-\frac{d}{\beta}\left(\frac{1}{p}-\frac{1}{r}\right)},
    \end{split} 
\end{equation*}
since $ \frac{d}{\beta}\left(\frac{1}{p}-\frac{1}{r}\right)+\frac{1}{\beta} <1$, and, consequently,
\[\| c(\cdot, t)\|_{L^{r}} \leq C t^{-\frac{d}{\beta}\left(\frac{1}{\wp}-\frac{1}{r}\right)}\left\| c_{0}\right\|_{L^{\wp}}+ C t^{1-\frac{d}{\beta}\left(\frac{1}{p}-\frac{1}{r}\right)}  \left\|\rho \right\|_{\mathbf{X}}.\] 
Therefore, 
\begin{equation*}
    \begin{split}
        t^{1-\frac{1}{\alpha}\left(\frac{d}{r}+1 \right)} \| c(\cdot, t)\|_{L^{r}} & \leq C t^{1-\frac{1}{\alpha}\left(\frac{d}{r}+1 \right)-\frac{d}{\beta}\left(\frac{1}{\wp}-\frac{1}{r}\right)}\left\| c_{0}\right\|_{L^{\wp}}+ C t^{2-\frac{1}{\alpha}\left(\frac{d}{r}+1 \right)-\frac{d}{\beta}\left(\frac{1}{p}-\frac{1}{r}\right)}  \left\|\rho \right\|_{\mathbf{X}} \\ 
        &\leq C T^{1-\frac{1}{\alpha}\left(\frac{d}{r}+1 \right)-\frac{d}{\beta}\left(\frac{1}{\wp}-\frac{1}{r}\right)}\left\| c_{0}\right\|_{L^{\wp}}+ C T^{2-\frac{1}{\alpha}\left(\frac{d}{r}+1 \right)-\frac{d}{\beta}\left(\frac{1}{p}-\frac{1}{r}\right)}  \|\rho_{0}\|_{L^{p}} \\
        & <C\left(T,\; \|\rho_{0}\|_{L^{p}}, \;\left\| c_{0}\right\|_{L^{\wp}} \right),        
    \end{split}
\end{equation*}
where again $\wp=r$ for \hyperlink{thm:local(a)}{\textbf{Case (a)}} and $\wp$ is in the range defined by \eqref{codition-r-0-Theorem} for \hyperlink{thm:local(b)}{\textbf{Case (b)}}.

\textbf{\textit{Proof of \ref{thm:local(iv)}:}}  Note that, from equation \eqref{def-eq-c}, we can obtain the correspondent equation for $c$. 
Moreover, since from \ref{thm:local(i)} we have $\rho(\cdot,t) \in L^1(\mathbb{R}^d)$ for almost every $t \in [0,T]$, we can apply \Cref{integral-k2}.  
Then, proceeding the same way as before, we establish 
\begin{equation*}
    \begin{split}
        \int_{\mathbb{R}^d}c(x, t)\mathrm{~d} x &=e^{-\frac{\gamma}{\tau} t} \int_{\mathbb{R}^d}K^{\beta}_{\frac{t}{\tau}} * c_{0}(x)\mathrm{~d} x+\int_{\mathbb{R}^d}\int_{0}^{t} \frac{1}{\tau} e^{\gamma\left(\frac{s-t}{\tau}\right)}  K^{\beta}_{\frac{t-s}{\tau}}* \rho(x,s) \mathrm{~d} s \mathrm{~d} x \\
        &=e^{-\frac{\gamma}{\tau} t} \int_{\mathbb{R}^d} c_{0}(x)\mathrm{~d} x+\int_{0}^{t} \frac{1}{\tau} e^{\gamma\left(\frac{s-t}{\tau}\right)}  \int_{\mathbb{R}^d} \rho(x,s)  \mathrm{~d} x \mathrm{~d} s \\
        &=e^{-\frac{\gamma}{\tau} t} \int_{\mathbb{R}^d} c_{0}(x)\mathrm{~d} x+\int_{0}^{t} \frac{1}{\tau} e^{\gamma\left(\frac{s-t}{\tau}\right)}  \int_{\mathbb{R}^d} \rho_0(x)  \mathrm{~d} x \mathrm{~d} s. 
    \end{split}
\end{equation*}
Thus, we obtain \eqref{c_decay1}, and \eqref{c_decay2} follows.

\textbf{\textit{Proof of \ref{thm:local(v)}:}} Let $q_0>p$ be such that 
\begin{equation}
    \label{relation-p-q-0}
    \frac{1}{p}+\frac{1}{r}-\frac{\alpha-1}{d}<\frac{1}{q_0}<\frac{1}{p}.
\end{equation}

Starting from \eqref{mild-solution-rho}, we apply H\"older's inequality and use estimate \eqref{estimativa-2} to obtain that
\begin{equation*}
    \begin{split}
        \|\rho(t)\|_{L^{q_0}}
        &\leq \|K^{\alpha}_{t}* \rho_{0}\|_{L^{q_0}} + \left\| \int_{0}^{t}  \nabla K^{\alpha}_{t-s} *\left[\rho(s) \nabla c(s) \right] \mathrm{d} s \right\|_{L^{q_0}}  \\ 
        &\leq C t^{-\frac{d}{\alpha}\left(\frac{1}{p}-\frac{1}{q_0}\right)} \|\rho_{0}\|_{L^{p}} +  \int_{0}^{t}  \left\|\nabla K^{\alpha}_{t-s} *\left[\rho(s) \nabla c(s) \right] \right\|_{L^{q_0}} \mathrm{d} s   \\ 
         &\leq C t^{-\frac{d}{\alpha}\left(\frac{1}{p}-\frac{1}{q_0}\right)} \|\rho_{0}\|_{L^{p}} +  C \int_{0}^{t}(t-s)^{-\frac{d}{\alpha}\left(\frac{1}{p}+\frac{1}{r}-\frac{1}{q_0}\right)-\frac{1}{\alpha}} \left\|\rho(s)\right\|_{L^{p}} \left\|\nabla c(s) \right\|_{L^{r}} \mathrm{d} s   \\ 
         &\leq C t^{-\frac{d}{\alpha}\left(\frac{1}{p}-\frac{1}{q_0}\right)} \|\rho_{0}\|_{L^{p}} \\&\quad \qquad +  C\left(T, \|\rho_{0}\|_{L^{p}},\left\|\nabla c_{0}\right\|_{L^{\wp}} \right) \int_{0}^{t}(t-s)^{-\frac{d}{\alpha}\left(\frac{1}{p}+\frac{1}{r}-\frac{1}{q_0}\right)-\frac{1}{\alpha}} s^{-1+\frac{1}{\alpha}\left(\frac{d}{r}+1\right)} \mathrm{d} s. 
    \end{split}
\end{equation*}
Since, from \eqref{relation-p-q-0}, we have $\frac{d}{\alpha}\left(\frac{1}{p}+\frac{1}{r}-\frac{1}{q_0}\right)+\frac{1}{\alpha}<1$, then we obtain 
\begin{equation}
    \label{rho-1}
    \|\rho(t)\|_{L^{q_0}} \leq  C\left(T, \; \|\rho_{0}\|_{L^{p}}, \;\left\|\nabla c_{0}\right\|_{L^{\wp}} \right) t^{-\frac{d}{\alpha}\left(\frac{1}{p}-\frac{1}{q_0}\right)}.
\end{equation}

Now, let $q_1>q_0$ be such that
\begin{equation}
    \label{relation-p-q-1}
    \frac{1}{q_0}+\frac{1}{r}-\frac{\alpha-1}{d}<\frac{1}{q_1}<\frac{1}{q_0}.  
\end{equation}

Note that, from \ref{(LA2)}, we have $\frac{1}{p}-\frac{1}{r}<\frac{\beta-1}{d}$, 
which, given that $\beta \in (1,2]$, implies
\begin{equation}
    \label{relation-p-r-bonded}
    \frac{d}{\alpha}\left(\frac{1}{p}-\frac{1}{r}\right)-\frac{1}{\alpha} <\frac{d}{\alpha}\left(\frac{\beta-1}{d}\right)-\frac{1}{\alpha}=\frac{\beta-2}{\alpha} \leq 0.
\end{equation}

Then, using \eqref{rho-1} and
estimate \eqref{local-estimate-c0} for \hyperlink{thm:local(a)}{\textbf{Case (a)}} and \eqref{local-estimate-c0-2} for \hyperlink{thm:local(b)}{\textbf{Case (b)}}, we obtain
\begin{equation*}
    \begin{split}
        &\|\rho(t)\|_{L^{q_1}}
         \leq \|K^{\alpha}_{t}* \rho_{0}\|_{L^{q_1}} + \left\| \int_{0}^{t}  \nabla K^{\alpha}_{t-s} *\left[\rho(s) \nabla c(s) \right] \mathrm{d} s \right\|_{L^{q_1}}\\ 
        & \;\qquad\leq C t^{-\frac{d}{\alpha}\left(\frac{1}{p}-\frac{1}{q_1}\right)} \|\rho_{0}\|_{L^{p}} +  C \int_{0}^{t}(t-s)^{-\frac{d}{\alpha}\left(\frac{1}{q_0}+\frac{1}{r}-\frac{1}{q_1}\right)-\frac{1}{\alpha}} \left\|\rho(s)\right\|_{L^{q_0}} \left\|\nabla c(s) \right\|_{L^{r}} \mathrm{d} s \\  
        &\;\qquad\leq C t^{-\frac{d}{\alpha}\left(\frac{1}{p}-\frac{1}{q_1}\right)} \|\rho_{0}\|_{L^{p}} \\
        & \;\qquad\qquad + C\left(T, \|\rho_{0}\|_{L^{p}},\left\|\nabla c_{0}\right\|_{L^{\wp}} \right) \int_{0}^{t}(t-s)^{-\frac{d}{\alpha}\left(\frac{1}{q_0}+\frac{1}{r}-\frac{1}{q_1}\right)-\frac{1}{\alpha}} s^{-\frac{d}{\alpha}\left(\frac{1}{p}-\frac{1}{r}-\frac{1}{q_0}\right)-1+\frac{1}{\alpha}} \mathrm{d} s \\
        &\;\qquad\leq C\left(T, \|\rho_{0}\|_{L^{p}},\left\|\nabla c_{0}\right\|_{L^{\wp}} \right) t^{-\frac{d}{\alpha}\left(\frac{1}{p}-\frac{1}{q_1}\right)},
    \end{split}
\end{equation*}
since, from \eqref{relation-p-q-1} and \eqref{relation-p-r-bonded}, we have $\frac{d}{\alpha}\left(\frac{1}{q_0}+\frac{1}{r}-\frac{1}{q_1}\right)+\frac{1}{\alpha}<1$ and $\frac{d}{\alpha}\left(\frac{1}{p}-\frac{1}{r}-\frac{1}{q_0}\right)-\frac{1}{\alpha} <0$, respectively. 

By induction, let $q_{n+1}>q_n$ be such that
\begin{equation}
    \label{relation-p-q-n}
    \frac{1}{q_{n}}+\frac{1}{r}-\frac{\alpha-1}{d}<\frac{1}{q_{n+1}}<\frac{1}{q_{n}},
\end{equation}
and assume
\begin{equation}
    \label{rho-2}
    \|\rho(t)\|_{L^{q_n}}
    \leq C\left(T, \; \|\rho_{0}\|_{L^{p}}, \;\left\|\nabla c_{0}\right\|_{L^{\wp}} \right) t^{-\frac{d}{\alpha}\left(\frac{1}{p}-\frac{1}{q_n}\right)}. 
\end{equation}

Then, for $t>0$, with estimates \eqref{estimativa-2} and \eqref{rho-2}, we obtain
\begin{align*}
    &\|\rho(t)\|_{L^{q_{n+1}}} 
    \leq \|K^{\alpha}_{t}* \rho_{0}\|_{L^{q_{n+1}}} + \left\| \int_{0}^{t}  \nabla K^{\alpha}_{t-s} *\left[\rho(s) \nabla c(s) \right] \mathrm{d} s \right\|_{L^{q_{n+1}}} \\ 
    & \;\leq C t^{-\frac{d}{\alpha}\left(\frac{1}{p}-\frac{1}{q_{n+1}}\right)} \|\rho_{0}\|_{L^{p}} \\ 
    & \hspace{0.6cm} + C \int_{0}^{t}(t-s)^{-\frac{d}{\alpha}\left(\frac{1}{q_n}+\frac{1}{r}-\frac{1}{q_{n+1}}\right)-\frac{1}{\alpha}} \left\|\rho(s)\right\|_{L^{q_n}} s^{-1+\frac{1}{\alpha}\left(\frac{d}{r}+1\right)} s^{1-\frac{1}{\alpha}\left(\frac{d}{r}+1\right)} \left\|\nabla c(s) \right\|_{L^{r}} \mathrm{d} s \\
    & \;\leq C t^{-\frac{d}{\alpha}\left(\frac{1}{p}-\frac{1}{q_{n+1}}\right)} \|\rho_{0}\|_{L^{p}} \\
    & \hspace{0.6cm} + C\left(T, \; \|\rho_{0}\|_{L^{p}}, \;\left\|\nabla c_{0}\right\|_{L^{\wp}} \right) \int_{0}^{t}(t-s)^{-\frac{d}{\alpha}\left(\frac{1}{q_n}+\frac{1}{r}-\frac{1}{q_{n+1}}\right)-\frac{1}{\alpha}} s^{-\frac{d}{\alpha}\left(\frac{1}{p}-\frac{1}{r}-\frac{1}{q_n}\right)-1+\frac{1}{\alpha}}  \mathrm{d} s.
\end{align*}
Thus, we derive
\begin{equation}
    \label{proof(iv)}
    \|\rho(t)\|_{L^{q_{n+1}}} \leq C t^{-\frac{d}{\alpha}\left(\frac{1}{p}-\frac{1}{q_{n+1}}\right)} \|\rho_{0}\|_{L^{p}} + C\left(T, \|\rho_{0}\|_{L^{p}}, \left\|\nabla c_{0}\right\|_{L^{\wp}} \right) t^{-\frac{d}{\alpha}\left(\frac{1}{p}-\frac{1}{q_{n+1}}\right)},
\end{equation}
since, from \eqref{relation-p-q-n} and \eqref{relation-p-r-bonded}, we have $\frac{d}{\alpha}\left(\frac{1}{q_n}+\frac{1}{r}-\frac{1}{q_{n+1}}\right)+\frac{1}{\alpha}<1$ and $\frac{d}{\alpha}\left(\frac{1}{p}-\frac{1}{r}-\frac{1}{q_n}\right)$ $-\frac{1}{\alpha}<0$, respectively. 

Notice that, for any $q>p$, there exists $n$ such that $p<q_{n}<q<q_{n+1} \leq \infty$. Then, using H\"older's inequality, we have that $L^{q_{n}}(\mathbb{R}^{d}) \cap L^{q_{n+1}}(\mathbb{R}^{d}) \subset L^q(\mathbb{R}^{d})$ and, from \eqref{proof(iv)}, we infer that 
\begin{equation}
    \label{proof(iv)-2}
    \|\rho(t)\|_{L^{q}}
    \leq C\left(T, \; \|\rho_{0}\|_{L^{p}}, \;\left\|\nabla c_{0}\right\|_{L^{\wp}} \right) t^{-\frac{d}{\alpha}\left(\frac{1}{p}-\frac{1}{q}\right)}.
\end{equation}
Thus, we conclude that $\rho \in L_{\mathrm{loc}}^{\infty}\left((0, T] ; L^q(\mathbb{R}^{d})\right)$ for all $q>p$. 

Now, observe that $0<\frac{d}{\alpha}\left(\frac{1}{p}-\frac{1}{q_{n}}\right)<1$ for all $n \in \mathbb{N}$. The lower bound holds because $q_n>p$ for all $n \in \mathbb{N}$, while the upper bound follows from the fact that, since $p$ satisfies \ref{(LA2)} and $\beta \in (1,2]$, we have $\frac{d}{\alpha}\frac{1}{p}<1+\frac{\beta-2}{\alpha}\leq 1$. Consequently, from \eqref{proof(iv)-2}, we see that $ \|\rho(t)\|_{L^{q}}$ is bonded by a function of $t$ that is integrable over $[0,T]$. From this, we conclude that $\rho \in L^1\left((0, T) ; L^q(\mathbb{R}^{d})\right)$ for all $q > p$. 
Additionally, it is straightforward to verify that $\rho \in L^1\left((0, T) ; L^q(\mathbb{R}^{d})\right)$ for all $1\leq q \leq p$.
Therefore, we obtain $\rho \in L^1\left((0, T) ; L^q(\mathbb{R}^{d})\right)$ for all $q \geq 1$.

\textbf{\textit{Proof of \ref{thm:local(vi)}:}}
We now prove the estimate of $\nabla c$, as before, by considering the integral formulation \eqref{def-eq-c} and applying estimates \eqref{estimativa-1} and  \eqref{estimativa-2} 
to obtain 
\begin{equation*}
    \begin{split}
        \|\nabla c(\cdot, t)\|_{L^{q}} &\leq  e^{-\frac{\gamma}{\tau} t}\left\|K^{\beta}_{\frac{t}{\tau}} * \nabla c_{0}\right\|_{L^{q}}+\int_{0}^{t} \frac{1}{\tau} e^{\frac{\gamma}{\tau} (s-t)}\left\|\nabla K^{\beta}_{\frac{t-s}{\tau}} * \rho(s)\right\|_{L^{q}} \mathrm{d} s \\
        &\leq C e^{-\frac{\gamma}{\tau}t}\left(\frac{t}{\tau}\right)^{-\frac{d}{\beta}\left(\frac{1}{\wp}-\frac{1}{r}\right)}\left\|\nabla c_{0}\right\|_{L^{\wp}}+C \int_{0}^{t} \frac{1}{\tau} e^{\frac{\gamma}{\tau} (s-t)}\left(\frac{t-s}{\tau}\right)^{-\frac{1}{\beta}} \|\rho(s)\|_{L^{q}} \mathrm{d} s.\\
    \end{split}
\end{equation*}

From \textbf{\textit{\ref{thm:local(v)}}}, we know that $\rho \in L^1\left((0, T) ; L^q(\mathbb{R}^{d})\right)$ and $\|\rho\|_{L^{\infty}\left([t_1,t_2]; L^q(\mathbb{R}^{d})\right)} < \infty$ for $ t_1, \; t_2 \in (0,T]$, as $\rho \in L_{\mathrm{loc}}^{\infty}\left((0, T] ; L^q(\mathbb{R}^{d})\right)$.
Then, taking into account the fact that $\beta>1$, we obtain
\begin{equation*}
    \begin{split}
        &\int_{0}^{t} \frac{1}{\tau} e^{\frac{\gamma}{\tau} (s-t)}\left(\frac{t-s}{\tau}\right)^{-\frac{1}{\beta}} \|\rho(s)\|_{L^{q}} \mathrm{d} s \\
        &\hspace{1cm} =\int_{0}^{t_1} \frac{1}{\tau} e^{\frac{\gamma}{\tau} (s-t)}\left(\frac{t-s}{\tau}\right)^{-\frac{1}{\beta}} \|\rho(s)\|_{L^{q}} \mathrm{d} s+\int_{t_1}^{t} \frac{1}{\tau} e^{\frac{\gamma}{\tau} (s-t)}\left(\frac{t-s}{\tau}\right)^{-\frac{1}{\beta}} \|\rho(s)\|_{L^{q}} \mathrm{d} s\\
        &\hspace{1cm}\leq C(T) \|\rho\|_{L^1\left((0, T) ; L^q(\mathbb{R}^{d})\right)} +C(T) \|\rho\|_{L^{\infty}\left([t_1,t_2]; L^q(\mathbb{R}^{d})\right)}. 
    \end{split}
\end{equation*}

Therefore, \eqref{local-estimate-c0-3} follows from setting $\wp=r$. For \hyperlink{thm:local(b)}{\textbf{Case (b)}}, $\wp$ can be set in the range defined by \eqref{codition-r-0-Theorem}.
\end{proof}

As in \Cref{Theorem:local-solutions}, we apply the fixed point theorem, using the same steps outlined in \Cref{subsec:Overall-idea}, to establish local well-posedness of system \eqref{eq-work} in the weighted $L^{\infty}(\mathbb{R}^d)$ space defined as 
\begin{equation*}
    L_{\alpha+d}^{\infty}(\mathbb{R}^d)=\left\{\varphi \in L^{\infty}(\mathbb{R}^d):\|\varphi\|_{L_{\alpha+d}^{\infty}} \equiv \operatorname{ess} \sup _{x \in \mathbb{R}^d}(1+|x|)^{\alpha+d}|\varphi(x)|<\infty\right\},
\end{equation*}
where $\alpha$ is the order of the fractional differential operator in \eqref{eq-work}. 
Note that, since $\alpha>0$, the weighted space $L_{\alpha+d}^{\infty}(\mathbb{R}^d)$ satisfies $L_{\alpha+d}^{\infty}(\mathbb{R}^d) \subset L^1(\mathbb{R}^d) \cap L^{\infty}(\mathbb{R}^d)$. 

To proceed, we present weighted estimates involving the kernel function $K^{\alpha}_t(x)$:
\begin{lemma}{\citep[Lemma 3.3]{Biler-fractional-9}} \label{lemma-norm-k-weighted-spaces}
    Let $\varphi \in L_{\alpha+d}^{\infty}(\mathbb{R}^d)$ and $t>0$. There exists $C>0$ independent of $\varphi$ and $t$ such that
    \begin{equation}
        \label{estimativa-1-weighted-space}
        \left\|K^{\alpha}_t *\varphi\right\|_{L_{\alpha+d}^{\infty}} \leq C(1+t)\left\|\varphi\right\|_{L_{\alpha+d}^{\infty}}, 
    \end{equation}
    \begin{equation}
        \label{estimativa-2-weighted-space}
        \left\|\nabla K^{\alpha}_t *\varphi\right\|_{L_{\alpha+d}^{\infty}} \leq C t^{-1 / \alpha}\left\|\varphi\right\|_{L_{\alpha+d}^{\infty}}+C t^{1-1 / \alpha}\left\|\varphi\right\|_{L^1}.
    \end{equation}
\end{lemma}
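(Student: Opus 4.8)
The plan is to combine the scaling structure of the kernels with their sharp pointwise decay. Recall the scaling relation $K^{\alpha}_t(z)=t^{-d/\alpha}K^{\alpha}(z/t^{1/\alpha})$ recorded just before \eqref{definition-k}; differentiating gives $\nabla K^{\alpha}_t(z)=t^{-(d+1)/\alpha}(\nabla K^{\alpha})(z/t^{1/\alpha})$. Beyond the membership $K^{\alpha}\in C_0^{\infty}(\mathbb{R}^d)$ recorded in \Cref{Properties-k}\ref{Properties-k-1}, I will use the standard sharp decay of the symmetric $\alpha$-stable kernel, namely $|K^{\alpha}(w)|+|\nabla K^{\alpha}(w)|\le C(1+|w|)^{-(d+\alpha)}$. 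Inserting the scaling relations converts these into the self-improving pointwise bounds $K^{\alpha}_t(z)\le C\,t\,(t^{1/\alpha}+|z|)^{-(d+\alpha)}$ and $|\nabla K^{\alpha}_t(z)|\le C\,t^{1-1/\alpha}(t^{1/\alpha}+|z|)^{-(d+\alpha)}$, in which the powers $t$ and $t^{1-1/\alpha}$ are exactly the ones appearing in \eqref{estimativa-1-weighted-space} and \eqref{estimativa-2-weighted-space}.

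To prove \eqref{estimativa-1-weighted-space}, fix $x$ with $|x|\ge 1$ (the range $|x|\le 1$ is immediate from $\|K^{\alpha}_t\|_{L^1}=1$ and $\|\varphi\|_{L^{\infty}}\le\|\varphi\|_{L_{\alpha+d}^{\infty}}$) and estimate $(1+|x|)^{\alpha+d}|K^{\alpha}_t*\varphi(x)|$ by inserting $|\varphi(y)|\le\|\varphi\|_{L_{\alpha+d}^{\infty}}(1+|y|)^{-(\alpha+d)}$ and splitting the $y$-integral at $|x-y|=|x|/2$. On the near-diagonal piece $|x-y|\le|x|/2$ one has $|y|\ge|x|/2$, so $(1+|y|)^{-(\alpha+d)}\le C(1+|x|)^{-(\alpha+d)}$; the weight cancels and what remains is $C\|\varphi\|_{L_{\alpha+d}^{\infty}}\int K^{\alpha}_t=C\|\varphi\|_{L_{\alpha+d}^{\infty}}$, the $O(1)$ term. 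On the far piece $|x-y|>|x|/2$ the kernel tail dominates: since $1+|x|\le 2|x|<4|x-y|\le 4(t^{1/\alpha}+|x-y|)$ when $|x|\ge 1$, the bound $K^{\alpha}_t(x-y)\le C\,t\,(t^{1/\alpha}+|x-y|)^{-(d+\alpha)}$ yields $(1+|x|)^{\alpha+d}K^{\alpha}_t(x-y)\le C\,t$, and integrating the surviving factor $(1+|y|)^{-(\alpha+d)}$ over $\mathbb{R}^d$ (convergent because $\alpha+d>d$) produces the contribution $C\,t\,\|\varphi\|_{L_{\alpha+d}^{\infty}}$. Adding the two pieces gives \eqref{estimativa-1-weighted-space}.

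The second estimate \eqref{estimativa-2-weighted-space} follows from the identical decomposition applied to $\nabla K^{\alpha}_t$. On the near-diagonal region the weight again cancels and one is left with $\|\nabla K^{\alpha}_t\|_{L^1}\|\varphi\|_{L_{\alpha+d}^{\infty}}$; by the scaling relation and $\nabla K^{\alpha}\in L^1$ (\Cref{Properties-k}\ref{norm-k-Lp}) we have $\|\nabla K^{\alpha}_t\|_{L^1}=t^{-1/\alpha}\|\nabla K^{\alpha}\|_{L^1}=C\,t^{-1/\alpha}$, which is the first term. On the far region $|x-y|>|x|/2$ the bound $|\nabla K^{\alpha}_t(x-y)|\le C\,t^{1-1/\alpha}(t^{1/\alpha}+|x-y|)^{-(d+\alpha)}$ together with $1+|x|\le 4(t^{1/\alpha}+|x-y|)$ gives $(1+|x|)^{\alpha+d}|\nabla K^{\alpha}_t(x-y)|\le C\,t^{1-1/\alpha}$; here I keep $|\varphi(y)|$ and integrate it directly, producing $C\,t^{1-1/\alpha}\|\varphi\|_{L^1}$, the second term. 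The range $|x|\le 1$ is once more absorbed into the first term via $\|\nabla K^{\alpha}_t\|_{L^1}\|\varphi\|_{L^{\infty}}$.

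The crux of the argument, and its only delicate point, is that the weight exponent $\alpha+d$ coincides exactly with the decay rate of the kernels, so a naive estimate that discards the decay of $\varphi$ leads to a divergent integral (the $(\alpha+d)$-moment of an $\alpha$-stable density is infinite). The resolution is the near/far split: near the diagonal the weight is transferred onto $\varphi$, costing nothing because the $L^1$-mass of the kernel is scale invariant, while in the far field the weight is absorbed by the kernel's polynomial tail, leaving an integrable remainder; the powers of $t$ then emerge automatically from the scaling of $K^{\alpha}_t$ and $\nabla K^{\alpha}_t$. The one ingredient that must be imported from outside \Cref{Properties-k} is the sharp pointwise decay $|K^{\alpha}(w)|+|\nabla K^{\alpha}(w)|\le C(1+|w|)^{-(d+\alpha)}$, which refines the bare $C_0^{\infty}$ statement and is a well-known property of the fractional heat kernel.
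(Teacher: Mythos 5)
The paper itself gives no proof of this lemma: it is imported verbatim by citation from \citep[Lemma 3.3]{Biler-fractional-9}, so there is no internal argument to compare against. Judged on its own merits, your proof is correct and complete. The scaling identities $K^{\alpha}_t(z)=t^{-d/\alpha}K^{\alpha}(z/t^{1/\alpha})$ and $\nabla K^{\alpha}_t(z)=t^{-(d+1)/\alpha}(\nabla K^{\alpha})(z/t^{1/\alpha})$ combined with the tail bound do yield the self-improving estimates $K^{\alpha}_t(z)\le C\,t\,(t^{1/\alpha}+|z|)^{-(d+\alpha)}$ and $|\nabla K^{\alpha}_t(z)|\le C\,t^{1-1/\alpha}(t^{1/\alpha}+|z|)^{-(d+\alpha)}$ (the exponent arithmetic $-d/\alpha+(d+\alpha)/\alpha=1$ and $-(d+1)/\alpha+(d+\alpha)/\alpha=1-1/\alpha$ checks out), and the near/far splitting at $|x-y|=|x|/2$ correctly transfers the weight onto $\varphi$ in the near region (via $1+|y|\ge(1+|x|)/2$) and onto the kernel tail in the far region (via $1+|x|\le 4(t^{1/\alpha}+|x-y|)$), producing exactly the terms $C\|\varphi\|_{L_{\alpha+d}^{\infty}}$, $C\,t\,\|\varphi\|_{L_{\alpha+d}^{\infty}}$ in the first estimate and $C\,t^{-1/\alpha}\|\varphi\|_{L_{\alpha+d}^{\infty}}$, $C\,t^{1-1/\alpha}\|\varphi\|_{L^1}$ in the second. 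You are also right to flag the one external ingredient: the decay $|K^{\alpha}(w)|+|\nabla K^{\alpha}(w)|\le C(1+|w|)^{-(d+\alpha)}$ cannot be extracted from \Cref{Properties-k} (nor from \Cref{lemma-inequality-2005}, whose moment restriction $|\vartheta|<|\gamma|+\alpha$ stops well short of order $d+\alpha$); it is the classical Blumenthal--Getoor/P\'olya estimate for the symmetric $\alpha$-stable density, valid for $\alpha\in(1,2)$ and trivially for $\alpha=2$ (Gaussian), and for the gradient the true decay is even one power better, so your weaker form is safe. In short: a correct, self-contained reconstruction of what the paper only cites, resting on one standard imported kernel bound.
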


Additionally, consider the following result:
\begin{lemma}{\citep[Lemma 6]{inequality-2005}} \label{lemma-inequality-2005} 
    Let $\gamma, \; \vartheta$ be multi-indices, $|\vartheta|<|\gamma|+\alpha \max(j,1)$, $j = 0, \; 1, \; 2, \;  \ldots ,$ $1 \leq p \leq \infty$, and $\alpha \in (0,2]$. Then,
    \begin{equation}
        \label{estimativa-k-lemma-inequality-2005}
        \|x^{\vartheta} D_{t}^{j} D^{\gamma}K^{\alpha}_t \|_{L^{p}}= Ct^{\frac{|\vartheta|-|\gamma|}{\alpha}-j-\frac{d(p-1)}{\alpha p}} 
    \end{equation}
    for $C$ a constant depending only on $\alpha, \; \gamma, \; \vartheta, \; j, \; p,$ and the space dimension $d$.
\end{lemma}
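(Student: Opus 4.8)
The plan is to exploit the exact self-similar structure of $K^\alpha_t$ to reduce the whole estimate to a single scaling computation plus the finiteness of one $t$-independent norm. Recall from \eqref{definition-kt} that $\widehat{K^\alpha_t}(\xi)=e^{-t|\xi|^\alpha}$, so in Fourier variables the time derivative acts as multiplication by $-|\xi|^\alpha$. Hence $\widehat{D_t^j K^\alpha_t}(\xi)=(-|\xi|^\alpha)^j e^{-t|\xi|^\alpha}$, and substituting $\eta=t^{1/\alpha}\xi$ in the inverse transform yields the self-similar identity
\[
 D_t^j K^\alpha_t(x)=t^{-j-d/\alpha}\,\Psi_j\!\left(x t^{-1/\alpha}\right),\qquad \Psi_j:=\mathcal F^{-1}\!\left[(-|\xi|^\alpha)^j e^{-|\xi|^\alpha}\right],
\]
where $\Psi_j$ is a fixed function independent of $t$ (for $j=0$, $\Psi_0=K^\alpha$). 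Applying $D^\gamma_x$ produces an extra factor $t^{-|\gamma|/\alpha}$ and differentiates $\Psi_j$, while multiplying by $x^\vartheta=t^{|\vartheta|/\alpha}(xt^{-1/\alpha})^\vartheta$ contributes $t^{|\vartheta|/\alpha}$. Collecting powers,
\[
 x^\vartheta D_t^j D^\gamma K^\alpha_t(x)=t^{-j-\frac{d}{\alpha}+\frac{|\vartheta|-|\gamma|}{\alpha}}\,G\!\left(xt^{-1/\alpha}\right),\qquad G(y):=y^\vartheta\,(D^\gamma\Psi_j)(y).
\]

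Next I would take the $L^p$ norm and change variables $y=xt^{-1/\alpha}$ (so $dx=t^{d/\alpha}dy$), which rescales the norm by $t^{d/(\alpha p)}$. This gives
\[
 \left\|x^\vartheta D_t^j D^\gamma K^\alpha_t\right\|_{L^p}=t^{-j-\frac{d}{\alpha}+\frac{|\vartheta|-|\gamma|}{\alpha}+\frac{d}{\alpha p}}\,\|G\|_{L^p}=C\,t^{\frac{|\vartheta|-|\gamma|}{\alpha}-j-\frac{d(p-1)}{\alpha p}},
\]
using $-\frac{d}{\alpha}+\frac{d}{\alpha p}=-\frac{d(p-1)}{\alpha p}$, which is exactly the claimed exponent; the constant $C=\|G\|_{L^p}$ depends only on $\alpha,\gamma,\vartheta,j,p,d$. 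Thus the scaling reproduces the stated power of $t$ unconditionally, and the entire content of the lemma reduces to showing $\|G\|_{L^p}<\infty$ for every $1\le p\le\infty$ under the hypothesis $|\vartheta|<|\gamma|+\alpha\max(j,1)$.

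To control $\|G\|_{L^p}$ it suffices to prove $G\in L^1\cap L^\infty$, since then $G\in L^p$ for all intermediate $p$ by interpolation. Because $\widehat{\Psi_j}(\xi)=(-|\xi|^\alpha)^je^{-|\xi|^\alpha}$ is integrable together with all its polynomial multiples, $\Psi_j\in C^\infty$ and is bounded, so $G$ is smooth and locally bounded; the only issue is decay as $|y|\to\infty$. Here the key analytic input is the pointwise decay estimate
\[
 \left|D^\gamma\Psi_j(y)\right|\le C\,(1+|y|)^{-\left(d+|\gamma|+\alpha\max(j,1)\right)}.
\]
For $\alpha=2$ this is immediate since $\Psi_j$ is a Gaussian times a polynomial, hence Schwartz. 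For $\alpha\in(0,2)$ it follows from the asymptotics of stable-type kernels: splitting the Fourier integral into low- and high-frequency parts, the smooth rapidly decaying high-frequency piece contributes faster-than-polynomial decay, while the leading non-smooth behavior $|\xi|^{\alpha\max(j,1)}$ of the symbol near $\xi=0$, multiplied by $(i\xi)^\gamma$ from $D^\gamma$, has inverse Fourier transform decaying precisely like $|y|^{-(d+|\gamma|+\alpha\max(j,1))}$. Granting this, $|G(y)|\le C(1+|y|)^{|\vartheta|-(d+|\gamma|+\alpha\max(j,1))}$, which is bounded (so $G\in L^\infty$) and integrable at infinity exactly when $|\vartheta|-(d+|\gamma|+\alpha\max(j,1))<-d$, i.e.\ when $|\vartheta|<|\gamma|+\alpha\max(j,1)$ (so $G\in L^1$); this is the precise role of the hypothesis. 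The main obstacle is this pointwise decay estimate for $D^\gamma\Psi_j$ in the genuinely nonlocal range $\alpha\in(0,2)$, where $\Psi_j$ is not Schwartz and the non-smoothness of the symbol at the origin must be handled by oscillatory-integral / stable-density techniques; everything else is bookkeeping of scaling exponents.
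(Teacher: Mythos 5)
The paper itself offers no proof of this lemma: it is quoted verbatim from the cited reference (its Lemma 6), so the only meaningful comparison is with the standard argument in the literature, and your proposal follows exactly that route. Your scaling computation is correct and complete: from $\widehat{K^{\alpha}_t}(\xi)=e^{-t|\xi|^{\alpha}}$ (the paper's definition \eqref{definition-kt}), the identity $x^{\vartheta}D_t^{j}D^{\gamma}K^{\alpha}_t(x)=t^{(|\vartheta|-|\gamma|)/\alpha-j-d/\alpha}\,G(xt^{-1/\alpha})$ with $G(y)=y^{\vartheta}D^{\gamma}\Psi_j(y)$ is exact, and the change of variables in the $L^{p}$ norm yields precisely the exponent $\tfrac{|\vartheta|-|\gamma|}{\alpha}-j-\tfrac{d(p-1)}{\alpha p}$ with $C=\|G\|_{L^{p}}$; this also explains cleanly why the lemma can be stated as an equality rather than an upper bound.

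The one genuine incompleteness is the pointwise decay bound $|D^{\gamma}\Psi_j(y)|\leq C(1+|y|)^{-(d+|\gamma|+\alpha\max(j,1))}$ in the range $\alpha\in(0,2)$, which you assert and motivate but do not prove. This bound is the entire analytic content of the cited lemma, since, as you correctly observe, it gives $G\in L^{1}\cap L^{\infty}$ exactly under the hypothesis $|\vartheta|<|\gamma|+\alpha\max(j,1)$, and everything else is bookkeeping. The bound itself is true and classical (P\'olya/Blumenthal--Getoor-type asymptotics for stable densities and their derivatives), and the mechanism you sketch—high frequencies contribute a rapidly decaying piece, while the leading homogeneous singularity $(i\xi)^{\gamma}|\xi|^{\alpha\max(j,1)}$ of the symbol at $\xi=0$ dictates the algebraic decay—is the standard way to prove it. To make it rigorous one must expand $e^{-|\xi|^{\alpha}}$ into homogeneous terms $|\xi|^{\alpha k}$, observe that terms with $\alpha k\in 2\mathbb{N}$ are polynomials (hence contribute faster-decaying pieces, so the stated rate is an upper bound in all cases, e.g.\ $\alpha=2$ or $\alpha j$ even), and control the remainder uniformly. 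None of this is conceptually difficult, but as written your proof is complete only modulo this one nontrivial kernel estimate, which you have deferred rather than established.
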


With these estimates, we can construct a mild solution to system \eqref{eq-work} in the weighted space $L_{\alpha+d}^{\infty}(\mathbb{R}^d)$ and establish estimates for the solution.
\begin{proposition} \label{Proposition-local}
    Let $d\geq 2$, $\alpha \in (1,2]$ and $\beta \in (1,d]$.
    Then, for every initial condition $\rho_0 \in L_{\alpha+d}^{\infty}(\mathbb{R}^d)$ and $\nabla c_0 \in L^{\infty}(\mathbb{R}^d)$, there exist $T=T(\|\rho_{0}\|_{L_{\alpha+d}^{\infty}}, \; \left\|\nabla c_{0}\right\|_{L^{\infty}})$ and a unique local mild solution $(\rho, \nabla c)$ to system \eqref{eq-work} in $[0,T]$ 
    such that $\rho \in C\hspace{-0.1cm}\left([0, T], L_{\alpha+d}^{\infty}(\mathbb{R}^{d})\right)$ and $\nabla c \in C\hspace{-0.1cm}\left(\left[0, T\right], L^{\infty}(\mathbb{R}^{d})\right)$. 
    Moreover, 
    \begin{enumerate}[label=\textbf{(\roman*)}] 
        \item if $\; \nabla c_0 \in L^{r}(\mathbb{R}^{d})$, then $\nabla c \in C\hspace{-0.1cm}\left(\left[0, T\right], L^{r}(\mathbb{R}^{d})\right)$, for $1\leq r \leq \infty$; \label{thm:local-Weighted-space(i)}
        \item if $\; c_0 \in W^{1,r}(\mathbb{R}^{d})$, then $c \in C\hspace{-0.1cm}\left(\left[0, T\right], W^{1,r}(\mathbb{R}^{d})\right)$, for $1\leq r \leq \infty$; \label{thm:local-Weighted-space(ii)}
        \item $\displaystyle \int_{\mathbb{R}^d}|x|^\vartheta \rho (x, t) \mathrm{d} x<\infty$ for any $0\leq \vartheta<\alpha$. \label{thm:local-Weighted-space(iii)}
    \end{enumerate}
\end{proposition}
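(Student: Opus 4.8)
The plan is to repeat verbatim the fixed-point scheme of \Cref{subsec:Overall-idea} (\textbf{Step 1--3}), but with the Banach space $\mathbf{X}=C\left([0,T],L_{\alpha+d}^{\infty}(\mathbb{R}^d)\right)$ equipped with $\|u\|_{\mathbf{X}}=\sup_{t\in[0,T]}\|u(t)\|_{L_{\alpha+d}^{\infty}}$, keeping the same $u_1$, $\mathcal{A}$, and $\mathcal{L}$ from \eqref{def-u0}, \eqref{def-B}, and \eqref{def-L}. The only change is that the $L^p$--$L^q$ kernel bounds of \Cref{Properties-k}\ref{norm-k}, for the outer convolution against $\nabla K^\alpha$, are replaced by the weighted bounds \eqref{estimativa-1-weighted-space}--\eqref{estimativa-2-weighted-space} of \Cref{lemma-norm-k-weighted-spaces}. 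The decisive structural fact I will exploit throughout is the embedding $L_{\alpha+d}^{\infty}(\mathbb{R}^d)\subset L^1(\mathbb{R}^d)\cap L^{\infty}(\mathbb{R}^d)$ (valid because $\alpha>0$), which lets me control simultaneously the weighted norm and the $L^1$ norm appearing on the right-hand side of \eqref{estimativa-2-weighted-space}. For $u_1$ I obtain $\|u_1\|_{\mathbf{X}}\leq C(1+T)\|\rho_0\|_{L_{\alpha+d}^{\infty}}$ directly from \eqref{estimativa-1-weighted-space}.

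For the bilinear term I would first estimate the inner $\nabla K^\beta$ layer: writing $I(s)$ for the inner time integral in \eqref{def-B}, I fix an exponent $q>d/(\beta-1)$ (finite since $\beta>1$), so that \eqref{estimativa-2} gives $\|\nabla K^\beta_{(s-w)/\tau}\ast v(w)\|_{L^{\infty}}\leq C((s-w)/\tau)^{-d/(\beta q)-1/\beta}\|v(w)\|_{L^q}$ with an \emph{integrable} singularity, while $\|v(w)\|_{L^q}\leq C\|v\|_{\mathbf{X}}$ through the embedding. Integrating in $w$ yields $\|I(s)\|_{L^\infty}\leq C s^{\theta}\|v\|_{\mathbf{X}}$ with $\theta=1-d/(\beta q)-1/\beta>0$. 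The pointwise product then satisfies both $\|u(s)I(s)\|_{L_{\alpha+d}^{\infty}}\leq C s^{\theta}\|u\|_{\mathbf{X}}\|v\|_{\mathbf{X}}$ and $\|u(s)I(s)\|_{L^1}\leq C s^{\theta}\|u\|_{\mathbf{X}}\|v\|_{\mathbf{X}}$, since multiplying an $L_{\alpha+d}^\infty$ function by an $L^\infty$ function preserves the weighted (and $L^1$) norm up to the $L^\infty$ factor. Feeding this into \eqref{estimativa-2-weighted-space} and integrating in $s$ via \Cref{Lemma-Estimativa-Integral} (using $\alpha>1$ so that $-1/\alpha>-1$, together with $\theta>0$) gives $\|\mathcal{A}(u,v)\|_{\mathbf{X}}\leq C\,(T^{1-1/\alpha+\theta}+T^{2-1/\alpha+\theta})\|u\|_{\mathbf{X}}\|v\|_{\mathbf{X}}$. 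The same recipe applied to \eqref{def-L}, now bounding $\|K^\beta_{s/\tau}\ast\nabla c_0\|_{L^\infty}\leq C\|\nabla c_0\|_{L^\infty}$ via $\|K^\beta_t\|_{L^1}=1$, yields $\|\mathcal{L}(u)\|_{\mathbf{X}}\leq C\,(T^{1-1/\alpha}+T^{2-1/\alpha})\|\nabla c_0\|_{L^\infty}\|u\|_{\mathbf{X}}$. Because every power of $T$ is strictly positive, both $C_{\mathcal{A}}$ and $C_{\mathcal{L}}$ tend to $0$ as $T\to 0^+$; choosing $T$ small enough that $2C_{\mathcal{L}}<1$ and $\|u_1\|_{\mathbf{X}}<\delta<(1-2C_{\mathcal{L}})/(4C_{\mathcal{A}})$, \Cref{lemma-contraction} produces the unique $\rho\in\mathbf{X}$, and $\nabla c$ is then recovered from \eqref{def-eq-c}.

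For the additional properties I would read them off the integral formulations. For \ref{thm:local-Weighted-space(i)}, applying \eqref{estimativa-1} to the first term of \eqref{def-eq-c} (with $\|K^\beta_{t/\tau}\|_{L^1}=1$) and \eqref{estimativa-2} with $q=r$ to the second (so the singularity $(t-s)^{-1/\beta}$ is integrable, using $\rho(s)\in L_{\alpha+d}^\infty\subset L^r$) shows $\nabla c(t)\in L^r$ with a bound uniform on $[0,T]$, continuity in $t$ following by dominated convergence. Property \ref{thm:local-Weighted-space(ii)} is identical after replacing $\nabla K^\beta$ by $K^\beta$ in the corresponding equation for $c$ and combining with \ref{thm:local-Weighted-space(i)}. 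Property \ref{thm:local-Weighted-space(iii)} is immediate from the weight: $|\rho(x,t)|\leq\|\rho(t)\|_{L_{\alpha+d}^\infty}(1+|x|)^{-(\alpha+d)}$, and $\int_{\mathbb{R}^d}|x|^{\vartheta}(1+|x|)^{-(\alpha+d)}\,\mathrm{d}x<\infty$ exactly when $\vartheta<\alpha$, with \Cref{lemma-inequality-2005} supplying the complementary kernel-moment bounds whenever sharper control is wanted.

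I expect the main obstacle to be the bilinear estimate for $\mathcal{A}$: it carries two nested convolutions (against $\nabla K^\beta$ and then $\nabla K^\alpha$), and the weighted bound \eqref{estimativa-2-weighted-space} forces me to track the $L^1$ norm of the product in parallel with its weighted norm. The delicate point is choosing $q>d/(\beta-1)$ so that the inner $\nabla K^\beta$ singularity is integrable while still keeping $v$ in $L^q$ through the embedding; securing $\theta>0$ and $-1/\alpha>-1$ simultaneously (which is precisely where $\alpha,\beta>1$ enter) is what makes the two time integrals in \Cref{Lemma-Estimativa-Integral} converge and, crucially, carry positive powers of $T$ so that the contraction closes for small $T$.
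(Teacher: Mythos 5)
Your fixed-point argument is essentially the paper's own: the paper also runs \Cref{lemma-contraction} in $\mathbf{X}=C\left([0,T],L_{\alpha+d}^{\infty}(\mathbb{R}^d)\right)$, with the weighted bounds of \Cref{lemma-norm-k-weighted-spaces} replacing \Cref{Properties-k}\ref{norm-k} and the embedding $L_{\alpha+d}^{\infty}\subset L^1\cap L^{\infty}$ doing the structural work, although it omits all details by appeal to the proof of \Cref{Theorem:local-solutions}; your two-norm bookkeeping (tracking the $L^1$ norm alongside the weighted norm, as forced by the second term in \eqref{estimativa-2-weighted-space}) and your choice of a finite $q>d/(\beta-1)$ to render the inner $\nabla K^{\beta}$ singularity integrable are a correct fleshing-out of what the paper leaves implicit, as is your treatment of items \ref{thm:local-Weighted-space(i)} and \ref{thm:local-Weighted-space(ii)}, which the paper also dismisses as routine. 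Where you genuinely diverge is item \ref{thm:local-Weighted-space(iii)}. The paper proves it by returning to the mild formulation \eqref{mild-solution-rho}: it splits the weight via $|x|^{\vartheta}\leq(1+|x-y|)^{\vartheta}(1+|y|)^{\vartheta}$ and invokes the kernel moment bounds of \Cref{lemma-inequality-2005} (namely $x_i^{\vartheta}K_t^{\alpha},\,x_i^{\vartheta}\nabla K_t^{\alpha}\in L^1$ for $\vartheta<\alpha$) to estimate the $\vartheta$-moment of each Duhamel term separately, which yields quantitative time dependence of the form $C\left(1+t^{\vartheta/\alpha}\right)\|\rho_0\|_{L_{\alpha+d}^{\infty}}$. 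You instead observe that the conclusion is already encoded in the solution space: since the fixed point gives $\rho(\cdot,t)\in L_{\alpha+d}^{\infty}(\mathbb{R}^d)$, one has $|\rho(x,t)|\leq\|\rho(t)\|_{L_{\alpha+d}^{\infty}}(1+|x|)^{-(\alpha+d)}$ almost everywhere, and $\int_{\mathbb{R}^d}|x|^{\vartheta}(1+|x|)^{-(\alpha+d)}\,\mathrm{d}x<\infty$ precisely for $0\leq\vartheta<\alpha$. This is correct, non-circular, and considerably shorter; what it gives up relative to the paper's route is only the explicit growth rate of the moments in $t$, which the statement does not ask for. Both arguments are valid, and yours is arguably the more economical proof of \ref{thm:local-Weighted-space(iii)}.
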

\begin{proof}
As before, we prove the existence and uniqueness of local solution to system \eqref{eq-work} by employing the fixed point theorem through \Cref{lemma-contraction}, considering the Banach space $\mathbf{X}=C\hspace{-0.1cm}\left([0, T], L_{\alpha+d}^{\infty}(\mathbb{R}^{d})\right)$ equipped with the usual norm, where $T$ depends on $\|\rho_{0}\|_{L_{\alpha+d}^{\infty}}$ and $\left\|\nabla c_{0}\right\|_{L^{\infty}}$.  
We omit the detailed steps here, as they closely follow the arguments in \cref{Theorem:local-solutions}. Moreover, \ref{thm:local-Weighted-space(i)} and \ref{thm:local-Weighted-space(ii)} 
are established using similar calculations, and their details are omitted for brevity. 

Next, we focus on proving \ref{thm:local-Weighted-space(iii)}. For that, note that $|x|^\vartheta \leq \left(1+|x-y|\right)^\vartheta \left(1+|y| \right)^\vartheta $ and $\left(1+|x|\right)^\vartheta \leq 1+\sum_{i=1}^{d} |x_i|^\vartheta$. 
Moreover, from \cref{lemma-inequality-2005}, since $\vartheta < \alpha$, we have $K^{\alpha}_t$ and $ x_i^\vartheta  K^{\alpha}_t \in L^{1}(\mathbb{R}^{d})$, for $1\leq i \leq d$ and $t>0$.  
Then, as $\rho_{0} \in L_{\alpha+d}^{\infty}(\mathbb{R}^{d})$, we obtain  
\begin{align*}
    \int_{\mathbb{R}^d} |x|^\vartheta (K^{\alpha}_t* &\rho_{0})(x) \mathrm{~d} x = \int_{\mathbb{R}^d}\int_{\mathbb{R}^d} |x|^\vartheta K^{\alpha}_t(x-y) \rho_{0}(y)\mathrm{~d} y \mathrm{~d} x  \\ 
    &\leq \int_{\mathbb{R}^d}\int_{\mathbb{R}^d} \left(1+|x-y|\right)^\vartheta \left(1+|y| \right)^\vartheta  K^{\alpha}_t(x-y) \rho_{0}(y)\mathrm{~d} y \mathrm{~d} x  \\ 
    &\leq \left(\int_{\mathbb{R}^d}\int_{\mathbb{R}^d} \left(1+|x-y|\right)^\vartheta K^{\alpha}_t(x-y)\left(1+|y| \right)^{-d-\alpha+\vartheta} \mathrm{~d} y \mathrm{~d} x\right) \left\|\rho_{0}\right\|_{L_{\alpha+d}^{\infty}}  \\ 
    &\leq \left(\left\|\left(1+|\cdot |\right)^\vartheta K^{\alpha}_t \right\|_{L^{1}}\left\| \left(1+|\cdot| \right)^{-d-\alpha+\vartheta}\right\|_{L^{1}} \right) \left\|\rho_{0}\right\|_{L_{\alpha+d}^{\infty}}  \\ 
    &= C \left\|\left(1+|\cdot |\right)^\vartheta K^{\alpha}_t \right\|_{L^{1}} \left\|\rho_{0}\right\|_{L_{\alpha+d}^{\infty}} \\
    &\leq   C \left\|\left(1+\sum_{i=1}^{d} |x_i|^\vartheta\right)  K^{\alpha}_t \right\|_{L^{1}} \left\|\rho_{0}\right\|_{L_{\alpha+d}^{\infty}}  \\
    & \leq  C\left( \left\|K^{\alpha}_t \right\|_{L^{1}}+ \sum_{i=1}^{d} \| x_i^\vartheta K^{\alpha}_t \|_{L^{1}} \right)\left\|\rho_{0}\right\|_{L_{\alpha+d}^{\infty}}  \\
    &\leq C \left(1+t^{\frac{\vartheta}{\alpha}} \right)\left\|\rho_{0}\right\|_{L_{\alpha+d}^{\infty}}, 
\end{align*}
where the last line is due to application of \Cref{lemma-inequality-2005}, as $\vartheta <\alpha$.

Moreover, from \Cref{lemma-inequality-2005}, as $\vartheta <\alpha+1$, we see that  $\nabla K^{\alpha}_t$ and  $ x_i^\vartheta  \nabla K^{\alpha}_t \in L^{1}(\mathbb{R}^{d})$ for $1\leq i \leq d$ and $t>0$. 
Then, similarly to the previous calculation, as $(\rho\nabla c)(\cdot,t) \in L_{\alpha+d}^{\infty}(\mathbb{R}^{d})$ for $t\geq 0$, we obtain 
\begin{equation*}
    \begin{split}
        \int_{\mathbb{R}^d}|x|^\vartheta \nabla K^{\alpha}_{t-s} *[\rho(s) \nabla c(s)] \mathrm{d} x &\leq C \left( \left\|\nabla K^{\alpha}_t \right\|_{L^{1}}+ \sum_{i=1}^{d} \| x_i^\vartheta \nabla K^{\alpha}_t \|_{L^{1}} \right) \left\|\rho(s) \nabla c(s)\right\|_{L_{\alpha+d}^{\infty}}\\ 
        &\leq C (t^{-\frac{1}{\alpha}}+t^{\frac{\vartheta-1}{\alpha}})  \left\|\rho(s) \right\|_{L_{\alpha+d}^{\infty}} \left\| \nabla c(s)\right\|_{L^{\infty}}. 
    \end{split}
\end{equation*}
Therefore, from \eqref{mild-solution-rho} and prior estimates, the result follows. 
\end{proof}

We can also obtain local well-posedness of system \eqref{eq-work} in the Banach space $E_{\alpha+d}(\mathbb{R}^d)$ defined as 
\begin{equation*}
    E_{\alpha+d}(\mathbb{R}^d)=\left\{\varphi\in W^{1,\infty}_{loc}(\mathbb{R}^d):\|\varphi\|_{E_{\alpha+d}} \equiv \|\varphi\|_{L_{\alpha+d}^{\infty}}+\|\nabla \varphi\|_{L_{\alpha+d}^{\infty}} <\infty\right\},
\end{equation*}
where again $\alpha$ is the order of the fractional differential operator in \eqref{eq-work}. 

For that, we consider the following estimates:
\begin{lemma}{\citep[Lemma 3.2]{brandolese2008far}} \label{lemma-norm-k-weighted-spaces-2}
    Assume $\varphi \in E_{\alpha+d}(\mathbb{R}^d)$ and $t>0$. There exists a constant $C>0$, independent of $\varphi$ and $t$, such that
    \begin{equation}
        \label{estimativa-1-weighted-space-2}
        \left\|K^{\alpha}_t *\varphi\right\|_{E_{\alpha+d}} \leq C(1+t)\left\|\varphi\right\|_{E_{\alpha+d}}, 
    \end{equation}
    \begin{equation}
        \label{estimativa-2-weighted-space-2}
        \left\|\nabla K^{\alpha}_t *\varphi\right\|_{E_{\alpha+d}} \leq C t^{-1 / \alpha}\left\|\varphi\right\|_{E_{\alpha+d}}+C t^{1-1 / \alpha}\left\|\varphi\right\|_{L^1}.
    \end{equation}
\end{lemma}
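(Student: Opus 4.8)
The plan is to reduce both $E_{\alpha+d}$ estimates to the weighted $L_{\alpha+d}^{\infty}$ bounds already recorded in \Cref{lemma-norm-k-weighted-spaces}, exploiting that convolution commutes with differentiation. Since $\|\varphi\|_{E_{\alpha+d}}=\|\varphi\|_{L_{\alpha+d}^{\infty}}+\|\nabla\varphi\|_{L_{\alpha+d}^{\infty}}$, the entire argument hinges on the identities $\nabla(K^{\alpha}_t*\varphi)=K^{\alpha}_t*\nabla\varphi$ and $\nabla(\nabla K^{\alpha}_t*\varphi)=\nabla K^{\alpha}_t*\nabla\varphi$, which let me estimate each half of the $E_{\alpha+d}$ norm by applying the scalar weighted bounds either to $\varphi$ or to $\nabla\varphi$.

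For \eqref{estimativa-1-weighted-space-2}, I would first use \eqref{estimativa-1-weighted-space} to get $\|K^{\alpha}_t*\varphi\|_{L_{\alpha+d}^{\infty}}\le C(1+t)\|\varphi\|_{L_{\alpha+d}^{\infty}}$, and then, via the first commutation identity, $\|\nabla(K^{\alpha}_t*\varphi)\|_{L_{\alpha+d}^{\infty}}=\|K^{\alpha}_t*\nabla\varphi\|_{L_{\alpha+d}^{\infty}}\le C(1+t)\|\nabla\varphi\|_{L_{\alpha+d}^{\infty}}$, applying \eqref{estimativa-1-weighted-space} to $\nabla\varphi$. Adding the two contributions and recalling the definition of the norm gives the claim with the same linear-in-$t$ constant.

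For \eqref{estimativa-2-weighted-space-2}, the first half is controlled directly by \eqref{estimativa-2-weighted-space}, which yields $Ct^{-1/\alpha}\|\varphi\|_{L_{\alpha+d}^{\infty}}+Ct^{1-1/\alpha}\|\varphi\|_{L^1}$. For the gradient half I would invoke the second commutation identity and apply \eqref{estimativa-2-weighted-space} to $\nabla\varphi$, obtaining $Ct^{-1/\alpha}\|\nabla\varphi\|_{L_{\alpha+d}^{\infty}}+Ct^{1-1/\alpha}\|\nabla\varphi\|_{L^1}$. The two leading terms combine into $Ct^{-1/\alpha}\|\varphi\|_{E_{\alpha+d}}$, while the $L^1$ terms are lower order and are handled through the embedding $L_{\alpha+d}^{\infty}\hookrightarrow L^1$, which holds because $\alpha>0$ makes $(1+|x|)^{-(\alpha+d)}$ integrable; in particular $\|\nabla\varphi\|_{L^1}\le C\|\nabla\varphi\|_{L_{\alpha+d}^{\infty}}\le C\|\varphi\|_{E_{\alpha+d}}$.

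I expect the bookkeeping of these $L^1$ terms to be the main obstacle. Transferring the derivative onto $\varphi$ is what preserves the sharp singularity $t^{-1/\alpha}$ --- loading both derivatives onto the kernel would instead cost $t^{-2/\alpha}$, as the moment bound \eqref{estimativa-k-lemma-inequality-2005} with $|\gamma|=2$ gives $\|D^2 K^{\alpha}_t\|_{L^1}=Ct^{-2/\alpha}$ --- but it produces a $\|\nabla\varphi\|_{L^1}$ contribution absent from the scalar statement. On the finite time intervals used in the fixed-point scheme one has $t^{1-1/\alpha}\le T\,t^{-1/\alpha}$, so this term folds into $Ct^{-1/\alpha}\|\varphi\|_{E_{\alpha+d}}$; to obtain the exact stated form with a single $\|\varphi\|_{L^1}$ for all $t>0$, the cleaner route --- the one taken in \citep{brandolese2008far} --- is to estimate the weighted $L^{\infty}$ norm of the convolution directly, splitting the weight as $(1+|x|)^{\alpha+d}\le C[(1+|x-y|)^{\alpha+d}+(1+|y|)^{\alpha+d}]$ and pairing each piece against the kernel moments provided by \Cref{lemma-inequality-2005}.
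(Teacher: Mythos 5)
The paper offers no proof of this lemma at all: it is imported verbatim, with citation, from \citep[Lemma 3.2]{brandolese2008far}, so there is no in-paper argument to compare against and your proposal must stand on its own. Your proof of \eqref{estimativa-1-weighted-space-2} does stand: since each component of $\nabla\varphi$ belongs to $L_{\alpha+d}^{\infty}(\mathbb{R}^d)$, the commutation $\nabla(K^{\alpha}_t*\varphi)=K^{\alpha}_t*\nabla\varphi$ together with \eqref{estimativa-1-weighted-space} applied once to $\varphi$ and once to $\nabla\varphi$ gives exactly the claim.

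For \eqref{estimativa-2-weighted-space-2}, however, there is a genuine gap, which you only half-acknowledge. Applying \eqref{estimativa-2-weighted-space} to $\nabla\varphi$ leaves the term $Ct^{1-1/\alpha}\|\nabla\varphi\|_{L^1}$, and this cannot be traded for the stated right-hand side uniformly in $t$: the absorption $t^{1-1/\alpha}\le T\,t^{-1/\alpha}$ holds only for $t\le T$ and makes the constant $T$-dependent, whereas the lemma asserts a single constant valid for all $t>0$. What your argument actually proves is $\|\nabla K^{\alpha}_t*\varphi\|_{E_{\alpha+d}}\le C\left(t^{-1/\alpha}+t^{1-1/\alpha}\right)\|\varphi\|_{E_{\alpha+d}}+Ct^{1-1/\alpha}\|\varphi\|_{L^1}$, whose $\|\varphi\|_{E_{\alpha+d}}$-coefficient grows as $t\to\infty$ (recall $1-1/\alpha>0$), while the stated coefficient decays; for oscillatory data with $\|\nabla\varphi\|_{L^1}\gg\|\varphi\|_{L^1}$ the two bounds genuinely differ, so this is not cosmetic. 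Deferring the exact form to the reference is not a proof, and your one-line sketch of that route would not work as written: in the splitting $(1+|x|)^{\alpha+d}\le C[(1+|x-y|)^{\alpha+d}+(1+|y|)^{\alpha+d}]$, the piece paired with $\|\varphi\|_{L^1}$ requires $\sup_z(1+|z|)^{\alpha+d}|D^{\gamma}K^{\alpha}_t(z)|$, which for small $t$ is of order $t^{-(d+|\gamma|)/\alpha}$ rather than $t^{1-1/\alpha}$ (the inhomogeneous weight does not vanish at the kernel's peak; one needs the homogeneous weight $|x-y|^{\alpha+d}$ there), and \Cref{lemma-inequality-2005} supplies only integer moments $x^{\vartheta}$ while $\alpha+d$ is not an integer. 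The good news is that the gap is repairable inside your own reduction framework: treat $t\le 1$ and $t\ge 1$ separately. For $t\le 1$ your estimate self-absorbs, since then $t^{1-1/\alpha}\le t^{-1/\alpha}$. For $t\ge 1$, instead of putting a derivative on $\varphi$, use the semigroup property of the kernel to write $\nabla\bigl(\nabla K^{\alpha}_t*\varphi\bigr)=\nabla K^{\alpha}_{t/2}*\bigl(\nabla K^{\alpha}_{t/2}*\varphi\bigr)$ and apply \eqref{estimativa-2-weighted-space} twice, together with $\|\nabla K^{\alpha}_{t/2}\|_{L^1}\le Ct^{-1/\alpha}$; this yields $Ct^{-2/\alpha}\|\varphi\|_{L_{\alpha+d}^{\infty}}+Ct^{1-2/\alpha}\|\varphi\|_{L^1}$, and both coefficients are dominated by $t^{-1/\alpha}$ and $t^{1-1/\alpha}$ precisely because $t\ge 1$. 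Finally, note that the weaker bound you did prove would in fact suffice for the only place the paper uses this lemma (the local-in-time \Cref{Proposition-space-E-alpha+d}), but it does not establish the lemma as stated.
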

Then, we can establish the following proposition:
\begin{proposition} \label{Proposition-space-E-alpha+d}
    Let $d\geq 2$, $\alpha \in (1,2]$ and $\beta \in (1,d]$. 
    Then, for every initial condition $\rho_0 \in E_{\alpha+d}(\mathbb{R}^d)$ and $\nabla c_0 \in L^{\infty}(\mathbb{R}^d)$, there exist $T=T\left(\|\rho_{0}\|_{E_{\alpha+d}},\left\|\nabla c_{0}\right\|_{L^{\infty}}\right)$ and a unique local mild solution $(\rho, \nabla c)$ to system \eqref{eq-work} in $[0,T]$, such that $\rho \in C\hspace{-0.1cm}\left([0, T], E_{\alpha+d}(\mathbb{R}^{d})\right)$ and $\nabla c \in C\hspace{-0.1cm}\left(\left[0, T\right], L^{\infty}(\mathbb{R}^{d})\right)$. 
    Moreover, if $\nabla c_0 \in L^{r}(\mathbb{R}^{d})$ for any $1\leq r \leq \infty$, then $\nabla c \in C\hspace{-0.1cm}\left(\left[0, T\right], L^{r}(\mathbb{R}^{d})\right)$.
    Furthermore, if $ c_0 \in W^{1,r}(\mathbb{R}^{d})$, then $c \in C\hspace{-0.1cm}\left(\left[0, T\right], W^{1,r}(\mathbb{R}^{d})\right)$. 
\end{proposition}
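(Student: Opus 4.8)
The plan is to rerun the fixed-point scheme of \Cref{subsec:Overall-idea} with the Banach space $\mathbf{X}=C\left([0,T],E_{\alpha+d}(\mathbb{R}^d)\right)$ in place of $C([0,T],L^p)$, obtaining the mild solution $\rho$ as the fixed point of $\rho=u_1+\mathcal{A}(\rho,\rho)+\mathcal{L}(\rho)$ with $u_1$, $\mathcal{A}$, $\mathcal{L}$ given by \eqref{def-u0}, \eqref{def-B}, \eqref{def-L}. The Lebesgue kernel bounds used in \Cref{Theorem:local-solutions} are now replaced by the weighted bounds of \Cref{lemma-norm-k-weighted-spaces-2}, and the pointwise nonlinearity is handled through the multiplicative structure of $E_{\alpha+d}$: since the weight multiplies the product, one has $\|fg\|_{E_{\alpha+d}}\le C\|f\|_{E_{\alpha+d}}\|g\|_{W^{1,\infty}}$ together with $\|fg\|_{L^1}\le\|f\|_{L^1}\|g\|_{L^\infty}$, and $E_{\alpha+d}(\mathbb{R}^d)\hookrightarrow L^1\cap W^{1,\infty}$. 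First I would verify $\|u_1\|_{\mathbf{X}}\le C(1+T)\|\rho_0\|_{E_{\alpha+d}}$ directly from \eqref{estimativa-1-weighted-space-2}.

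For the bilinear form I would estimate the inner factor $G_v(s)=\int_0^s\frac1\tau e^{\gamma(w-s)/\tau}\nabla K^{\beta}_{(s-w)/\tau}*v(w)\,\mathrm{d}w$ in $W^{1,\infty}$. The crucial point is that its gradient can be shifted onto $v$, namely $\nabla G_v(s)=\int_0^s\frac1\tau e^{\gamma(w-s)/\tau}\nabla K^{\beta}_{(s-w)/\tau}*\nabla v(w)\,\mathrm{d}w$, and since $v\in E_{\alpha+d}\hookrightarrow W^{1,\infty}$ the bound \eqref{estimativa-2} gives $\|G_v(s)\|_{W^{1,\infty}}\le C\,s^{1-1/\beta}\|v\|_{\mathbf{X}}$, with no temporal singularity. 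Feeding $\|u(s)G_v(s)\|_{E_{\alpha+d}}\le C\,s^{1-1/\beta}\|u\|_{\mathbf{X}}\|v\|_{\mathbf{X}}$ and the analogous $L^1$ bound into \eqref{estimativa-2-weighted-space-2}, and applying \Cref{Lemma-Estimativa-Integral}, I expect $C_{\mathcal{A}}\le C\left(T^{\,2-\frac1\alpha-\frac1\beta}+T^{\,3-\frac1\alpha-\frac1\beta}\right)$, a positive power of $T$ since $\tfrac1\alpha+\tfrac1\beta<2$.

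The linear form is where I expect the main obstacle. Writing $H(s)=e^{-\gamma s/\tau}K^{\beta}_{s/\tau}*\nabla c_0$, the $E_{\alpha+d}$ product estimate forces control of $\|H(s)\|_{W^{1,\infty}}$, hence of $\nabla H(s)$. Because $\nabla c_0$ is assumed only in $L^\infty$, the derivative must fall on the kernel, $\nabla H(s)=e^{-\gamma s/\tau}\nabla K^{\beta}_{s/\tau}*\nabla c_0$, producing $\|\nabla H(s)\|_{L^\infty}\le C\,s^{-1/\beta}\|\nabla c_0\|_{L^\infty}$ --- a genuine time singularity that is absent in the $L_{\alpha+d}^\infty$ setting of \Cref{Proposition-local}, where no derivative of $\nabla c$ is ever required. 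Inserting this into \eqref{estimativa-2-weighted-space-2} and using \Cref{Lemma-Estimativa-Integral} yields a contribution $\int_0^t(t-s)^{-1/\alpha}s^{-1/\beta}\,\mathrm{d}s=C\,t^{\,1-\frac1\alpha-\frac1\beta}$ to $C_{\mathcal{L}}$. The integral is finite for every $\alpha\in(1,2]$ and $\beta\in(1,d]$, but it is a \emph{nonnegative} power of $T$ --- so that $C_{\mathcal{L}}\to0$ as $T\to0$, which is what closes the contraction for arbitrary $\|\nabla c_0\|_{L^\infty}$ and gives continuity of $\rho$ up to $t=0$ --- precisely when $\tfrac1\alpha+\tfrac1\beta\le1$. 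I therefore anticipate that making the naive estimate close for the full parameter range is the delicate step: one either records $\tfrac1\alpha+\tfrac1\beta\le1$ as the operative restriction, runs the fixed point in a time-weighted norm $\sup_{t\in(0,T]}t^{\mu}\|\cdot\|_{E_{\alpha+d}}$ that absorbs the $s^{-1/\beta}$ weight (at the cost of recovering continuity at $t=0$ separately), or strengthens the hypothesis on $c_0$ so that a derivative can be transferred off the kernel.

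Once $C_{\mathcal{A}}$ and $C_{\mathcal{L}}$ are made sufficiently small (by shrinking $T$, using $\|\nabla c_0\|_{L^\infty}<\infty$) and $\|u_1\|_{\mathbf{X}}<\delta$, \Cref{lemma-contraction} delivers a unique $\rho\in\mathbf{X}$ with $\|\rho\|_{\mathbf{X}}\le2\delta$; the gradient $\nabla c$ is then read off from \eqref{def-eq-c} and lies in $C([0,T],L^\infty)$ by \eqref{estimativa-1}, \eqref{estimativa-2} and $E_{\alpha+d}\hookrightarrow L^1\cap L^\infty$. Finally, the refinements $\nabla c\in C([0,T],L^r)$ for $1\le r\le\infty$ and $c\in C([0,T],W^{1,r})$ follow verbatim from the arguments proving the corresponding statements in \Cref{Proposition-local}, by splitting \eqref{def-eq-c} into its free part (bounded by $\|\nabla c_0\|_{L^r}$ via \eqref{estimativa-1}) and its source part (bounded through the already-established $\rho\in C([0,T],E_{\alpha+d})$ together with $E_{\alpha+d}\hookrightarrow L^1\cap L^\infty$).
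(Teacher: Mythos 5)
You follow exactly the paper's route: the paper's entire proof of this proposition is the single sentence that it ``follows the same steps as in \Cref{Theorem:local-solutions}'', that is, the fixed-point scheme of \Cref{subsec:Overall-idea} run in $C\left([0,T],E_{\alpha+d}(\mathbb{R}^d)\right)$ with \Cref{lemma-norm-k-weighted-spaces-2} replacing the $L^p$ kernel bounds. Your treatment of $u_1$ and of the bilinear form (shifting the gradient onto $v$ inside $G_v$, then using $E_{\alpha+d}\hookrightarrow W^{1,\infty}\cap L^1$ and the product estimate) is a correct implementation of that scheme.

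However, the obstacle you isolate in the linear form is a genuine gap --- one your proposal shares with the paper rather than resolves. Since $\nabla c_0$ is assumed only in $L^{\infty}$, any estimate of $\|u(s)H(s)\|_{E_{\alpha+d}}$ must pay $\|\nabla H(s)\|_{L^{\infty}}\lesssim s^{-1/\beta}\|\nabla c_0\|_{L^{\infty}}$, and $\sup_{0<t\le T}\int_0^t(t-s)^{-1/\alpha}s^{-1/\beta}\,\mathrm{d}s=\sup_{0<t\le T}C\,t^{1-\frac1\alpha-\frac1\beta}$ is infinite whenever $\frac1\alpha+\frac1\beta>1$, so $C_{\mathcal{L}}$ is not even finite on a large part of the admissible range (e.g.\ $\alpha=\beta=3/2$). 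Moreover, this is not an artifact of crude estimation: taking $\gamma=0$, $\tau=1$, $u\equiv1$, the operator $g\mapsto\int_0^t\nabla K^{\alpha}_{t-s}*\nabla K^{\beta}_{s}*g\,\mathrm{d}s$ has symbol $-\xi\otimes\xi\,\bigl(e^{-t|\xi|^{\beta}}-e^{-t|\xi|^{\alpha}}\bigr)/\bigl(|\xi|^{\alpha}-|\xi|^{\beta}\bigr)$, whose supremum (a lower bound for its $L^{\infty}\to L^{\infty}$ norm) is of order $t^{-(2-\max\{\alpha,\beta\})/\min\{\alpha,\beta\}}$ and blows up as $t\to0$ whenever $\max\{\alpha,\beta\}<2$; on that subrange $\mathcal{L}$ is genuinely unbounded on $C([0,T],E_{\alpha+d})$ and no rearrangement of these kernel estimates can save the argument. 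Two corrections to your proposed exits: at equality $\frac1\alpha+\frac1\beta=1$ the power of $T$ is zero, so one additionally needs $\|\nabla c_0\|_{L^{\infty}}$ small (as in case (b.1) of \Cref{Theorem:local-solutions}); and the uniform weight $\sup_t t^{\mu}\|\cdot\|_{E_{\alpha+d}}$ does not help, because the $t^{\mu}$ gained outside the Duhamel integral is cancelled by the $s^{-\mu}$ paid for $\|u(s)\|_{E_{\alpha+d}}$ inside it --- one must weight only the gradient component of the norm, and then continuity at $t=0$ in $E_{\alpha+d}$, which is part of the stated conclusion, is lost. In short, your argument proves the proposition only under an extra hypothesis such as $\frac1\alpha+\frac1\beta<1$ (or $\le1$ with small $\|\nabla c_0\|_{L^{\infty}}$), or a strengthened assumption like $\nabla c_0\in W^{1,\infty}(\mathbb{R}^d)$; the paper's one-line proof silently skips exactly this point, so the deficiency you flagged is in the paper, not in your reading of it.
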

\begin{proof}
    The proof follows the same steps as in \Cref{Theorem:local-solutions}.
\end{proof}

\subsection{Global well-posedness} \label{subsec:Global-existence} 

\begin{proof}[Proof of \Cref{Theorem:Global-solutions}]
To establish the existence and uniqueness of a global solution to system \eqref{eq-work},  by applying \textbf{Step 1, 2} and \textbf{3} with $\mathbf X$ being the Banach space given by $\mathbf X=\{u \in C\hspace{-0.1cm}\left((0, \infty), L^{p}(\mathbb{R}^{d})\right): \|u\|_{\mathbf{X}} < \infty \}$, with the norm defined by $\|u\|_{\mathbf{X}} = \underset{t>0}{\sup} \; t^{\sigma}\|u(t)\|_{L^{p}}$, where $\sigma$ is given by \eqref{definition-sigma} with $p$ and $r$ satisfying \ref{(LA2)} and \ref{(GA5)}.  
To verify that the premises in \Cref{lemma-contraction} are satisfied, along with $p$ and $r$ satisfying \ref{(LA2)} and \ref{(GA5)}, we set $p_1$ and $p_2$ as in \eqref{definition-p1-p2}, and use \hyperlink{Properties-norm-k}{Lemmas \ref{Properties-k}\ref{norm-k}}, \ref{Lemma-Estimativa-Integral}, \ref{lemma-conditions-geral-global}, \ref{Lemma-difinition-p1}, and \ref{Lemma-difinition-p2}, for $t>0$, to obtain  
\begin{align}
    &\|u_1\|_{\mathbf{X}}
    \leq C_{1}\left\|\rho_{0}\right\|_{L^{p_1}}, \label{Estimate-u0-global}\\
    &\|\mathcal{A}(u,v)\|_{\mathbf{X}} \leq C_{2} \left\| u\right\|_{\mathbf{X}} \|v\|_{\mathbf{X}},  \label{Estimate-B-global} \\
    & \left\|\mathcal{L}(u)\right\|_{\mathbf{X}} \leq C_{3} \left\|\nabla c_{0}\right\|_{L^{p_{2}}} \left\|u\right\|_{\mathbf{X}}, \label{Estimate-L-global}
\end{align}
where $C_{1}$, $C_{2}$ and $C_{3}$ are time-independent constants. 

Indeed, since $p_1=\frac{pd}{\alpha \sigma p+d}$ is such that $1 \leq p_1 < p$, as shown in \Cref{Lemma-difinition-p1}, we can apply \eqref{estimativa-1} to \eqref{def-u0}. Then,   we see that \eqref{Estimate-u0-global} is derived from
\begin{equation*}
    \|u_1\|_{L^{p}} = \left\|K^{\alpha}_{t}(x) * \rho_{0}\right\|_{L^{p}} 
    \leq C_1 t^{-\frac{d}{\alpha}\left(\frac{1}{p_1}-\frac{1}{p}\right)}\left\|\rho_{0}\right\|_{L^{p_1}} 
    = C_1t^{-\sigma}\left\|\rho_{0}\right\|_{L^{p_1}}   
\end{equation*}
and the definition of $\|\cdot\|_{\mathbf{X}}$.

To prove \eqref{Estimate-B-global} and \eqref{Estimate-L-global}, we point out, as established in \Cref{lemma-conditions-geral-global},  that parameters $p$ and $r$ satisfying \ref{(LA2)} and \ref{(GA5)} ensure $\frac{1}{p}+\frac{1}{r}\leq 1$. As a result, there exists $q$, with $1 \leq q \leq p \leq r \leq \infty$, such that $\frac{1}{q}=\frac{1}{p}+\frac{1}{r}$. 

Hence, to derive \eqref{Estimate-B-global}, we select this $q$, and apply estimate \eqref{estimativa-2} to \eqref{def-B} for $t>0$, to obtain
\begin{equation*}
    \begin{split}
    &\|\mathcal{A}(u,v)\|_{L^{p}}  \\&\leq C \int_{0}^{t}\hspace{-0.05cm}(t-s)^{-\frac{d}{\alpha}\left(\frac{1}{q}-\frac{1}{p}\right)-\frac{1}{\alpha}}\left\| u(s)\right\|_{L^{p}}\hspace{-0.05cm}\int_{0}^{s} \frac{e^{\frac{\gamma}{\tau} (w-s)}}{\tau}\left(\frac{s-w}{\tau}\right)^{-\frac{d}{\beta}\left(\frac{1}{p}-\frac{1}{r}\right)-\frac{1}{\beta}} \|v(w)\|_{L^{p}} \mathrm{d} w \mathrm{d} s \\ 
    & \leq C \int_{0}^{t}(t-s)^{-\frac{1}{\alpha}\left(\frac{d}{r}+1 \right)}  s^{-\sigma}  \int_{0}^{s} \frac{e^{\frac{\gamma}{\tau} (w-s)}}{\tau} \left(\frac{s-w}{\tau}\right)^{-\frac{d}{\beta}\left(\frac{1}{p}-\frac{1}{r}\right)-\frac{1}{\beta}}  w^{-\sigma} \mathrm{d} w \mathrm{d} s\left\| u\right\|_{\mathbf{X}} \|v\|_{\mathbf{X}}.
    \end{split}
\end{equation*}

Next, we can apply \Cref{Lemma-Estimativa-Integral} to estimate the inner integral above, since, by \Cref{lemma-conditions-geral-global}, $\sigma<1$ and $\frac{d}{\beta}\left(\frac{1}{p}-\frac{1}{r}\right)+\frac{1}{\beta}<1$. Then, 
\begin{equation*}
    \begin{split}
        \int_{0}^{s} \frac{1}{\tau} e^{\frac{\gamma}{\tau} (w-s)}\left(\frac{s-w}{\tau}\right)^{-\frac{d}{\beta}\left(\frac{1}{p}-\frac{1}{r}\right)-\frac{1}{\beta}}  w^{-\sigma} \mathrm{d} w & \leq C \int_{0}^{s} \left(s-w\right)^{-\frac{d}{\beta}\left(\frac{1}{p}-\frac{1}{r}\right)-\frac{1}{\beta}}  w^{-\sigma} \mathrm{d} w\\
        & \leq C s^{1-\sigma-\frac{d}{\beta}\left(\frac{1}{p}-\frac{1}{r}\right)-\frac{1}{\beta}},
    \end{split}    
\end{equation*}
and, since, in view of \Cref{lemma-conditions-geral-global}, we have $\frac{1}{\alpha}\left(\frac{d}{r}+1 \right)<1$ and $\sigma+\frac{1}{2} \left[\frac{d}{\beta}\left(\frac{1}{p}-\frac{1}{r}\right)+\frac{1}{\beta}\right]<1$, we can apply  \Cref{Lemma-Estimativa-Integral} again and use the above estimate to obtain
\begin{equation*}
    \begin{split}
        &\int_{0}^{t}(t-s)^{-\frac{1}{\alpha}\left(\frac{d}{r}+1 \right)}  s^{-\sigma}  \int_{0}^{s} \frac{1}{\tau} e^{\frac{\gamma}{\tau} (w-s)}\left(\frac{s-w}{\tau}\right)^{-\frac{d}{\beta}\left(\frac{1}{p}-\frac{1}{r}\right)-\frac{1}{\beta}}  w^{-\sigma} \mathrm{d} w  \mathrm{~d} s  \\
        & \hspace{5cm} \leq C \int_{0}^{t}(t-s)^{-\frac{1}{\alpha}\left(\frac{d}{r}+1 \right)}  s^{-\sigma} s^{1-\sigma-\frac{d}{\beta}\left(\frac{1}{p}-\frac{1}{r}\right)-\frac{1}{\beta}}  \mathrm{d} s \\
        & \hspace{5cm} \leq C \int_{0}^{t}(t-s)^{-\frac{1}{\alpha}\left(\frac{d}{r}+1 \right)}  s^{1-2\sigma-\frac{d}{\beta}\left(\frac{1}{p}-\frac{1}{r}\right)-\frac{1}{\beta}}  \mathrm{d} s \\
        &\hspace{5cm} \leq C t^{-\sigma}. 
    \end{split}    
\end{equation*}
Therefore, \eqref{Estimate-B-global} follows from the definition of $\sigma$ and $\|\cdot\|_{\mathbf{X}}$.

Finally, to obtain \eqref{Estimate-L-global}, we take $q$ such that $\frac{1}{q}=\frac{1}{p}+\frac{1}{r}$, and  apply estimates \eqref{estimativa-1} and \eqref{estimativa-2} to \eqref{def-L} for $t>0$. Since, from \Cref{Lemma-difinition-p2}, $1 \leq p_2 \leq r$, we obtain 
\begin{equation*}
    \begin{split}
        \left\|\mathcal{L}(u)\right\|_{L^{p}}
        &\leq C\int_{0}^{t} (t-s)^{-\frac{d}{\alpha}\left(\frac{1}{q}-\frac{1}{p}\right)-\frac{1}{\alpha}} \left\|u(s)\right\|_{L^{p}}   e^{-\frac{\gamma}{\tau} s}\left\|K^{\beta}_{\frac{s}{\tau}}(x) * \nabla c_{0}(x)\right\|_{L^{r}} \mathrm{d} s \\
        &\leq C\int_{0}^{t} (t-s)^{-\frac{1}{\alpha}\left(\frac{d}{r}+1 \right)} \left\|u(s)\right\|_{L^{p}}   e^{-\frac{\gamma}{\tau}s}\left(\frac{s}{\tau}\right)^{-\frac{d}{\beta}\left(\frac{1}{p_2}-\frac{1}{r}\right)}\left\|\nabla c_{0}\right\|_{L^{p_2}} \mathrm{d} s \\ 
        &\leq C\left( \int_{0}^{t} (t-s)^{-\frac{1}{\alpha}\left(\frac{d}{r}+1 \right)} s^{-\sigma-\frac{d}{\beta}\left(\frac{1}{p_2}-\frac{1}{r}\right) } e^{-\frac{\gamma}{\tau}s}\mathrm{d} s\right) \left\|\nabla c_{0}\right\|_{L^{p_{2}}} \left\|u\right\|_{\mathbf{X}}.
    \end{split}
\end{equation*}

Now, we can apply \Cref{Lemma-Estimativa-Integral} to estimate the integral above, since, by \Cref{lemma-conditions-geral-global,Lemma-difinition-p2}, we have $\frac{1}{\alpha}\left(\frac{d}{r}+1 \right)<1$ and $\sigma+\frac{d}{\beta}\left(\frac{1}{p_2}-\frac{1}{r}\right)<1$. Then, we obtain
\begin{equation*}
    \begin{split}
        \int_{0}^{t} (t-s)^{-\frac{1}{\alpha}\left(\frac{d}{r}+1 \right)} s^{-\sigma-\frac{d}{\beta}\left(\frac{1}{p_2}-\frac{1}{r}\right) } e^{-\frac{\gamma}{\tau}s}\mathrm{d} s&=\int_{0}^{t} (t-s)^{-\frac{1}{\alpha}\left(\frac{d}{r}+1 \right)} s^{-\sigma-\frac{d}{\beta}\left(\frac{1}{p_2}-\frac{1}{r}\right) } e^{-\frac{\gamma}{\tau}s}\mathrm{d} s\\
        &\leq \int_{0}^{t} (t-s)^{-\frac{1}{\alpha}\left(\frac{d}{r}+1 \right)} s^{-\sigma-\frac{d}{\beta}\left(\frac{1}{p_2}-\frac{1}{r}\right) } \mathrm{d} s\\
        & \leq C t^{-\sigma},
    \end{split}
\end{equation*}
as $p_{2}=\frac{dr\alpha}{\beta \left(r(\alpha-1)-d\right)+d\alpha}$ implies that $\frac{1}{\alpha}\left(\frac{d}{r}+1 \right)+\frac{d}{\beta}\left(\frac{1}{p_2}-\frac{1}{r}\right)=1$.
Thus, \eqref{Estimate-L-global} follows from the definition of $\|\cdot\|_{\mathbf{X}}$.

Note that, from the definition of $\sigma$, the estimate for $C_\mathcal{A}$ is time-independent. Similarly, the estimates for $\|u_1\|_{\mathbf{X}}$ and $C_\mathcal{L}$, from the definition of $\sigma$ and $p_1$, and of $\sigma$ and $p_2$, respectively, do not depend on time. These time independencies are crucial to apply \Cref{lemma-contraction} to obtain global well-posedness.
Beside this observation, the calculations required to obtain these estimates closely resemble the one for \Cref{Theorem:local-solutions}. We chose to include these calculations in the global result while omitting them for the local case (\Cref{subsec:Local-existence}) to emphasize this main difference between the proof of \cref{Theorem:local-solutions} and \cref{Theorem:Global-solutions}, which is the time independence of $C_\mathcal{A}$ and $C_\mathcal{L}$. 

Next, in view of inequalities \eqref{Estimate-u0-global}, \eqref{Estimate-B-global}, and \eqref{Estimate-L-global}, we set $\delta$ and the constants $C_{\mathcal{A}}$ and $C_{\mathcal{L}}$ from \Cref{lemma-contraction}  as $\delta=C_{1}\left\|\rho_{0}\right\|_{L^{p_1}}$, $C_{\mathcal{A}}=C_2$, and $C_{\mathcal{L}}=C_{3} \left\|\nabla c_{0}\right\|_{L^{p_{2}}}$. 
Then, choosing $\epsilon=\frac{1}{2 \max \left\{4C_1 C_2, 2C_{3}\right\}}$, condition \eqref{condition-global} implies that
\begin{equation*}
    C_1 \left\|\rho_{0}\right\|_{L^{p_1}}=\delta < \frac{1-2C_\mathcal{L}}{4C_\mathcal{A}}=\frac{1-2C_3\left\|\nabla c_{1}\right\|_{L^{p_2}}}{4C_2}.
\end{equation*}
Consequently, as a result of \Cref{lemma-contraction}, we prove the global existence of solutions and establish estimate \eqref{global-estimate-rho}. Therefore, $\rho \in C((0, \infty), L^{p}(\mathbb{R}^d))\cap \mathbf{X}$. 

To prove estimate \eqref{global-estimate-c},  we can apply \cref{Properties-k}\ref{norm-k} and \Cref{Lemma-Estimativa-Integral} to
\begin{equation*}
    \|\nabla c(\cdot, t)\|_{L^{r}} \leq  e^{-\frac{\gamma}{\tau} t}\left\|K^{\beta}_{\frac{t}{\tau}} * \nabla c_{0}\right\|_{L^{r}}+\int_{0}^{t} \frac{1}{\tau} e^{\frac{\gamma}{\tau} (s-t)}\left\|\nabla K^{\beta}_{\frac{t-s}{\tau}} * \rho(s)\right\|_{L^{r}} \mathrm{d} s, 
\end{equation*}
which follows from \eqref{def-eq-c}, since, from \Cref{lemma-conditions-geral-global}, we have $\sigma<1$ and $\frac{d}{\beta}\left(\frac{1}{p}-\frac{1}{r}\right)+\frac{1}{\beta}<1$. Then,  we obtain 
\begin{equation*}
    \begin{split}
        \|\nabla c(\cdot, t)&\|_{L^{r}} \leq C e^{-\frac{\gamma}{\tau}t}\left(\frac{t}{\tau}\right)^{-\frac{d}{\beta}\left(\frac{1}{p_2}-\frac{1}{r}\right)}\left\|\nabla c_{0}\right\|_{L^{p_2}}\\
        &\qquad\qquad+C \int_{0}^{t} \frac{1}{\tau} e^{\frac{\gamma}{\tau} (s-t)}\left(\frac{t-s}{\tau}\right)^{-\frac{d}{\beta}\left(\frac{1}{p}-\frac{1}{r}\right)-\frac{1}{\beta}} \|\rho(s)\|_{L^{p}} \mathrm{d} s \\ 
        &\leq C \left(\frac{t}{\tau}\right)^{-\frac{d}{\beta}\left(\frac{1}{p_2}-\frac{1}{r}\right)}\left\|\nabla c_{0}\right\|_{L^{p_2}}+C \int_{0}^{t} \frac{1}{\tau} \left(\frac{t-s}{\tau}\right)^{-\frac{d}{\beta}\left(\frac{1}{p}-\frac{1}{r}\right)-\frac{1}{\beta}} s^{-\sigma} \mathrm{d} s \left\|\rho \right\|_{\mathbf{X}} \\
        & \leq C t^{-\frac{d}{\beta}\left(\frac{1}{p_2}-\frac{1}{r}\right)}\left\|\nabla c_{0}\right\|_{L^{p_2}}+ C t^{1-\sigma-\frac{d}{\beta}\left(\frac{1}{p}-\frac{1}{r}\right)-\frac{1}{\beta}}  \left\|\rho \right\|_{\mathbf{X}}.
    \end{split}
\end{equation*}

Therefore, by multiplying both sides of the inequality above by $t^{1-\frac{1}{\alpha}\left(\frac{d}{r}+1 \right)}$ and using the definitions of $\sigma$ and $p_2$, as well as estimate \eqref{global-estimate-rho}, inequality \eqref{global-estimate-c} is obtained.

To prove that $\rho \in L^{\infty}\left((0, \infty), L^{p_1}(\mathbb{R}^{d})\right) $, consider $p$ satisfying \eqref{condition-2-global-p-1} or $p$ and $r$ satisfying \eqref{condition-2-global-p-2} and \eqref{condition-2-global-r-1}.  
Note that, since from \Cref{lemma-condition-extra-p-r}  we have $\frac{\alpha}{d}\sigma \leq  \frac{1}{r}$, $q$ defined as $1/q=1/p+1/r$ is such that $q \leq p_1$. Indeed, $\frac{1}{p_1}=\frac{\alpha}{d}\sigma+\frac{1}{p} \leq \frac{1}{q}=\frac{1}{p}+\frac{1}{r}$. 
Then, for $t>0$, we can apply \Cref{Properties-k}\ref{norm-k} to obtain
\begin{equation*}
    \begin{split}
        \|\rho(t)\|_{L^{p_1}}& \leq \|K^{\alpha}_{t}* \rho_{0}\|_{L^{p_1}} + \left\| \int_{0}^{t}  \nabla K^{\alpha}_{t-s} *\left[\rho(s) \nabla c(s) \right] \mathrm{d} s \right\|_{L^{p_1}}\\ 
        & \leq C \|\rho_{0}\|_{L^{p_1}} +  \int_{0}^{t}  \left\|\nabla K^{\alpha}_{t-s} *\left[\rho(s) \nabla c(s) \right] \right\|_{L^{p_1}} \mathrm{d} s\\ 
        & \leq C \|\rho_{0}\|_{L^{p_1}} +  C \int_{0}^{t}(t-s)^{-\frac{d}{\alpha}\left(\frac{1}{q}-\frac{1}{p_1}\right)-\frac{1}{\alpha}} \left\|\rho(s) \nabla c(s) \right\|_{L^{q}} \mathrm{d} s. 
    \end{split}
\end{equation*}

Moreover, we can apply \eqref{global-estimate-rho}, \eqref{global-estimate-c}, and \Cref{Lemma-Estimativa-Integral} to estimate the integral above, since, from \Cref{lemma-conditions-geral-global},  we have $\frac{1}{\alpha}\left(\frac{d}{r}+1\right)-1<\sigma<\frac{1}{\alpha}\left(\frac{d}{r}+1\right)$. Therefore,
\begin{align*}
        \|&\rho(t)\|_{L^{p_1}}  \leq C \|\rho_{0}\|_{L^{p_1}} +   C \int_{0}^{t}(t-s)^{-\frac{d}{\alpha}\left(\frac{1}{p}+\frac{1}{r} -\frac{\alpha}{d}\sigma-\frac{1}{p}\right)-\frac{1}{\alpha}} \left\|\rho(s) \right\|_{L^{p}}  \left\| \nabla c(s) \right\|_{L^{r}} \mathrm{d} s \\ 
        & \leq C \|\rho_{0}\|_{L^{p_1}} \\
        &\quad+  C \int_{0}^{t}(t-s)^{-\frac{1}{\alpha}\left(\frac{d}{r}+1\right)+\sigma} s^{-\sigma-1+\frac{1}{\alpha}\left(\frac{d}{r}+1\right)}s^{\sigma} \left\|\rho(s) \right\|_{L^{p}} s^{1-\frac{1}{\alpha}\left(\frac{d}{r}+1\right)}   \left\| \nabla c(s) \right\|_{L^{r}} \mathrm{d} s \\ 
        & \leq C \|\rho_{0}\|_{L^{p_1}}  \\
        &\quad + C \left(\int_{0}^{t}(t-s)^{-\frac{1}{\alpha}\left(\frac{d}{r}+1\right)+\sigma} s^{-\sigma-1+\frac{1}{\alpha}\left(\frac{d}{r}+1\right)}\mathrm{d} s\right)   \left\|\rho_0 \right\|_{L^{p_1}} \left( \left\|\rho_0 \right\|_{L^{p_1}}+\left\| \nabla c_0 \right\|_{L^{p_2}}  \right)    \\ 
        &\leq   C \left\|\rho_0 \right\|_{L^{p_1}} \left(1+\left\|\rho_0 \right\|_{L^{p_1}}+\left\| \nabla c_0 \right\|_{L^{p_2}}  \right).   
\end{align*}
\end{proof}

\begin{remark}
If we assume $\alpha=\beta=2$, then system \eqref{eq-work} is the classical one. In that case, for $d=2$, \Cref{Theorem:Global-solutions} becomes: 
\begin{center}
\begin{minipage}{0.95\textwidth}
Let $\rho_0 \in L^{1}(\mathbb{R}^{d})$ and $\nabla c_0 \in L^{2}(\mathbb{R}^{d})$. Then, there exists $\delta>0$ such that, if $ \|\rho_{0}\|_{L^{1}} +\left\|\nabla c_{0}\right\|_{L^{2}}< \delta$, there is a unique global mild solution $\rho \in C\hspace{-0.1cm}\left((0, \infty), L^{p}(\mathbb{R}^{d})\right) \cap \mathbf{X}$,  $\nabla c \in C\hspace{-0.1cm}\left((0, \infty), L^{r}(\mathbb{R}^{d})\right)$, where $\frac{4}{3}<p<2$ and $r=\frac{p}{p-1}$ (\ie $2<r<4$). 
Moreover, $\rho \in L^{\infty}\left((0, \infty), L^{1}(\mathbb{R}^{d})\right) $ conserves mass, $\sup_{t \geq 0} t^{1-\frac{1}{p}}\|\rho(\cdot, t)\|_{L^{p}} \leq C\|\rho_{0}\|_{L^{1}}$, and $\sup_{t \geq 0} t^{\frac{1}{p}-\frac{1}{2}}\left\|\nabla c(\cdot, t)\right\|_{L^{r}} <C \left\|\nabla c_{0}\right\|_{L^{2}}+ C  \left\|\rho_0 \right\|_{L^{1}}$.       
In addition, if $\int_{\mathbb{R}^{d}} c_{0}(x) \mathrm{d}x < \infty$, the concentration of the chemical, $c$, grows as in \eqref{c_decay1} and \eqref{c_decay2}.     
\end{minipage}
\end{center}
\end{remark}

\begin{proposition}
\label{Global-solutions-corollary}
    Let $d=2$, and consider system \eqref{eq-work} with $\alpha=\beta \; \in \left(5/3, \; 2 \right]$. Assume that \ref{(LA2)} is in force and, in addition, 
    \begin{equation}
        \label{Global-solutions-corollary-r-1}
        r\leq \min \left\{ \frac{p}{p(\alpha-1)-1}, \; \frac{1}{2-\alpha} \right\}.
    \end{equation}
    Consider the Banach space $\mathbf{X}$ defined as 
    \begin{equation*}
        \mathbf{X}=\{u \in C\hspace{-0.1cm}\left([0, \infty), L^{p}(\mathbb{R}^{d})\right): \|u\|_{\mathbf{X}} \equiv \underset{t>0}{sup} \; t^{\sigma} \left\|u (t) \right\|_{L^{p}} < \infty \},
    \end{equation*}
    where $\displaystyle \sigma=2\left(\frac{\alpha-1}{\alpha}\right)-\frac{2}{\alpha p}$. 
    Then, there exists $\epsilon>0$ such that, if 
    \begin{equation}
        \label{condition-corollary}
        \|\rho_{0}\|_{L^{\frac{1}{\alpha-1}}} +\left\|\nabla c_{0}\right\|_{L^{\frac{2}{\alpha-1}}}< \epsilon,
    \end{equation}
    there exists a unique global mild solution $\rho \in C((0, \infty), L^{p}(\mathbb{R}^{d})) \cap \mathbf{X}$,  $\nabla c \in C((0, \infty),$ $ L^{r}(\mathbb{R}^{d}))$, with initial condition $\rho_0 \in L^{\frac{1}{\alpha-1}}(\mathbb{R}^{d})$, $\nabla c_0 \in L^{\frac{2}{\alpha-1}}(\mathbb{R}^{d})$. 
    Moreover, $\rho \in L^{\infty}((0, \infty), L^{\frac{1}{\alpha-1}}(\mathbb{R}^{d})) $,  
    \begin{equation}
        \label{global-estimate-rho-2}
        \sup_{t \geq 0} t^{\sigma}\|\rho(\cdot, t)\|_{L^{p}} \leq C\|\rho_{0}\|_{L^{\frac{1}{\alpha-1}}}, 
    \end{equation}
    and    
    \begin{equation}
        \label{global-estimate-c-2}
        \sup_{t \geq 0} t^{1-\frac{1}{\alpha}\left(\frac{2}{r}+1 \right)}\left\|\nabla c(\cdot, t)\right\|_{L^{r}} <C \left(\left\|\nabla c_{0}\right\|_{L^{\frac{2}{\alpha-1}}}+ \left\|\rho_0 \right\|_{L^{\frac{1}{\alpha-1}}} \right).
    \end{equation}
    
    Furthermore, if $\rho_0 \in L^1(\mathbb{R}^{d})$, then 
    $\rho(\cdot,t) \in L^1(\mathbb{R}^{d})$ 
    for $t>0$, the corresponding solution conserves mass and, for $\displaystyle \int_{\mathbb{R}^{d}} c_{0} \mathrm{~d}x < \infty$, the chemical concentration, $c$, grows as 
    \begin{equation*}
        \int_{\mathbb{R}^{d}} c\left(x,t\right) \mathrm{d}x =\left(\int_{\mathbb{R}^{d}}\rho_0(x) \mathrm{d}x\right) \left(\frac{1-e^{-\frac{\gamma}{\tau} t}}{\gamma}\right)+\left(\int_{\mathbb{R}^{d}} c_0 (x) \mathrm{d}x\right)  e^{-\frac{\gamma}{\tau} t}
    \end{equation*}
    and  $\displaystyle \int_{\mathbb{R}^{d}} c\left(x,t\right) \mathrm{d}x \xrightarrow[\; t\rightarrow \infty \;]{}\frac{\int_{\mathbb{R}^{d}}\rho_0(x) \mathrm{d}x}{\gamma}$, which includes $\gamma=0$ by taking $\gamma \rightarrow 0$.  
\end{proposition}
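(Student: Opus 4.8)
The plan is to obtain this proposition as a specialization of \Cref{Theorem:Global-solutions} to the regime $d=2$, $\alpha=\beta\in(5/3,2]$, so almost all of the work is verifying that the stated hypotheses force \ref{(LGA1)}, \ref{(LA2)}, \ref{(GA5)} and that the general parameters collapse to the announced values. First I would note that \ref{(LGA1)} is immediate ($d=2$, $\alpha=\beta\in(5/3,2]\subset(1,2]$, and $\beta\in(1,2]=(1,d]$), while \ref{(LA2)} is assumed. Substituting $d=2$ and $\beta=\alpha$ into \eqref{definition-sigma} I would show
\begin{equation*}
    \sigma = 2-\frac{1}{\alpha}\Bigl(\tfrac{2}{r}+1\Bigr)-\frac{2}{\alpha}\Bigl(\tfrac{1}{p}-\tfrac{1}{r}\Bigr)-\frac{1}{\alpha}=2\Bigl(\frac{\alpha-1}{\alpha}\Bigr)-\frac{2}{\alpha p},
\end{equation*}
which is the exponent in the statement. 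Feeding this back into \eqref{definition-p1-p2} gives $\alpha\sigma p = 2p(\alpha-1)-2$, hence $p_1=\frac{2p}{\alpha\sigma p+2}=\frac{1}{\alpha-1}$, and likewise $p_2=\frac{2r\alpha}{\alpha(r(\alpha-1)-2)+2\alpha}=\frac{2}{\alpha-1}$ (note $p_2$ is independent of $r$ after cancellation). This identifies the data spaces in \eqref{condition-corollary} with the $L^{p_1},L^{p_2}$ of \Cref{Theorem:Global-solutions}.

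Next I would verify \ref{(GA5)}. Since $\alpha>5/3>3/2$, one has $2\beta(\alpha-1)=2\alpha(\alpha-1)\ge\alpha$, so \ref{(GA5)} is active and demands $p<\frac{d\alpha}{2\beta(\alpha-1)-\alpha}=\frac{2}{2\alpha-3}$. Because the announced conclusion includes $\rho\in L^\infty((0,\infty),L^{p_1})$, I am forced into Case (a) of \ref{(LA2)}, where $p\le\frac{d}{\beta-1}=\frac{2}{\alpha-1}$; and since $\alpha\le 2$ implies $2\alpha-3\le\alpha-1$, i.e. $\frac{2}{\alpha-1}\le\frac{2}{2\alpha-3}$, the bound from \ref{(GA5)} is consistent. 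I would also record here the structural role of the threshold $\alpha>5/3$: it is exactly the condition under which $\frac{2}{\alpha-1}<\frac{1}{2-\alpha}$, so that the window for $r$ — bounded below by $\frac{d}{\alpha-1}=\frac{2}{\alpha-1}$ in \ref{(LA2)} and above by $\frac{1}{2-\alpha}$ in \eqref{Global-solutions-corollary-r-1} — is nonempty. With \ref{(LGA1)}, \ref{(LA2)}, \ref{(GA5)} in force, \Cref{Theorem:Global-solutions} then yields directly the unique global mild solution and the estimates \eqref{global-estimate-rho-2}, \eqref{global-estimate-c-2}, which are \eqref{global-estimate-rho}, \eqref{global-estimate-c} rewritten with the computed $p_1,p_2$.

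For the claim $\rho\in L^\infty((0,\infty),L^{1/(\alpha-1)})$ I would invoke the last part of \Cref{Theorem:Global-solutions}, checking that its hypotheses specialize exactly to what is assumed. Using $\max\{\alpha(2\alpha-3),\alpha(\alpha-1)\}=\alpha(\alpha-1)$ (valid for $\alpha\le 2$), conditions \eqref{condition-2-global-p-1}–\eqref{condition-2-global-p-2} reduce to $p\le\frac{4}{3(\alpha-1)}$ or $\frac{4}{3(\alpha-1)}<p<\frac{2}{\alpha-1}$, both admitted by Case (a) of \ref{(LA2)}. Moreover, with $d=2$, $\beta=\alpha$ one has $2\beta-\alpha=\alpha$ and $\beta(\alpha-1)+\alpha(\beta-1)=2\alpha(\alpha-1)$, so \eqref{condition-2-global-r-1} becomes $r\le\frac{p}{(\alpha-1)p-1}$, which is precisely the first entry of the minimum in \eqref{Global-solutions-corollary-r-1}; the second entry $\frac{1}{2-\alpha}$ is the bound guaranteeing compatibility with the lower endpoint $\frac{2}{\alpha-1}$ flagged above. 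Hence the $L^{p_1}$-boundedness conclusion of the theorem applies verbatim.

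Finally, the mass conservation and the chemical-concentration growth are not automatic from the global theorem and would be established exactly as in the proof of \Cref{Theorem:local-solutions}, items \ref{thm:local(i)} and \ref{thm:local(iv)}. Under the extra assumption $\rho_0\in L^1$, I would run the same exponent-lowering iteration on \eqref{mild-solution-rho} (interpolating between the global $L^{p_1}$ bound and the time-weighted $L^p$ bound) to place $\rho(\cdot,t)$ in $L^1\cap L^{p_1}$ and to secure $\rho\nabla c\in L^1$, then integrate \eqref{mild-solution-rho} in $x$ and apply \Cref{integral-k2}, whose second identity annihilates the nonlinear term, to obtain conservation of mass; integrating \eqref{def-eq-c} in its $c$-form against $\mathrm{d}x$ and again applying \Cref{integral-k2} reproduces the growth law \eqref{c_decay1}–\eqref{c_decay2}, including the $\gamma\to0$ limit. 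I expect the main obstacle to be bookkeeping rather than analysis: one must confirm that \emph{all} the algebraic thresholds align simultaneously — that $\alpha>5/3$ at once activates \ref{(GA5)}, keeps the $r$-interval nonempty, and is consistent with the $L^{p_1}$-integrability window — and one must check that the bootstrap of \Cref{Theorem:local-solutions}\ref{thm:local(i)} still closes in the global time-weighted norm, where the relevant powers of $t$ integrate over $(0,\infty)$ rather than over a finite interval.
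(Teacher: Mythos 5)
Your reduction of the proposition to \Cref{Theorem:Global-solutions} is correct and is exactly the paper's route: the paper likewise begins by asserting that the hypotheses fall under those of \Cref{Theorem:Global-solutions}, and your explicit computations ($\sigma=2\frac{\alpha-1}{\alpha}-\frac{2}{\alpha p}$, $p_1=\frac{1}{\alpha-1}$, $p_2=\frac{2}{\alpha-1}$ from \eqref{definition-p1-p2}, and the reduction of \eqref{condition-2-global-p-1}--\eqref{condition-2-global-r-1} to $p<\frac{2}{\alpha-1}$ together with $r\le\frac{p}{p(\alpha-1)-1}$) agree with what the paper records in the remark following the proposition. One minor bookkeeping point: your claim that Case (a) of \ref{(LA2)} plus $\frac{2}{\alpha-1}\le\frac{2}{2\alpha-3}$ yields \ref{(GA5)} silently passes over the endpoint $\alpha=2$, $p=2$ (and, for $\alpha<2$, the endpoint $p=\frac{2}{\alpha-1}$ versus the strict inequality in \eqref{condition-2-global-p-2}); these cases are harmless only because no $r$ then satisfies both \ref{(LA2)} and \eqref{Global-solutions-corollary-r-1}, which deserves a line.

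The genuine gap is in the final part (mass conservation and the $L^1$ statements). The paper does not run the bootstrap of \Cref{Theorem:local-solutions}\ref{thm:local(i)}, and your proposed ``exponent-lowering iteration'' cannot even start in this regime: each step of that iteration pairs the current integrability $L^{q}$ of $\rho$ with $\nabla c\in L^{r}$ and pushes the product through \eqref{estimativa-2}, which requires the product exponent $m$ with $\frac1m=\frac1q+\frac1r$ to satisfy $m\ge 1$. Here the uniform-in-time information is $\rho\in L^{\infty}\left((0,\infty),L^{p_1}(\mathbb{R}^2)\right)$ with $p_1=\frac{1}{\alpha-1}$, and \eqref{Global-solutions-corollary-r-1} forces $\frac{1}{p_1}+\frac1r=(\alpha-1)+\frac1r\ge 1$, i.e.\ $m\le 1$ with equality only when $r=\frac{1}{2-\alpha}$; for $r<\frac{1}{2-\alpha}$ the pairing is inadmissible, so there is no downward walk from $L^{p_1}$. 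The paper's argument goes upward instead: interpolating between the uniform $L^{p_1}$ bound and the time-weighted $L^{p}$ bound places $\rho(\cdot,t)\in L^{q}$ for every $q\in\left[\frac{1}{\alpha-1},p\right]$, and the H\"older conjugate $q$ of $r$ (defined by $\frac1q+\frac1r=1$) lies in exactly this window: the lower bound $q\ge\frac{1}{\alpha-1}$ is precisely the hypothesis $r\le\frac{1}{2-\alpha}$, and the upper bound $q\le p$ is $r\ge\frac{p}{p-1}$ from \ref{(LA2)}. A single H\"older application then gives $(\rho\nabla c)(\cdot,t)\in L^{1}(\mathbb{R}^2)$ for each $t$; feeding this into \eqref{mild-solution-rho} via \eqref{estimativa-2} gives $\rho(\cdot,t)\in L^{1}(\mathbb{R}^2)$, and integrating \eqref{mild-solution-rho} and \eqref{def-eq-c} with \Cref{integral-k2} yields conservation of mass and the growth law, with all time integrals taken over the finite interval $(0,t)$, so your worry about integrability over $(0,\infty)$ is moot. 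This also corrects your reading of the second entry of the minimum in \eqref{Global-solutions-corollary-r-1}: the inequality $\frac{2}{\alpha-1}<\frac{1}{2-\alpha}$, equivalent to $\alpha>5/3$, is what keeps the $r$-window nonempty, whereas the constraint $r\le\frac{1}{2-\alpha}$ itself is the analytic ingredient that makes the conjugate-exponent H\"older step, and hence the whole $L^1$ theory of the proposition, work.
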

\begin{remark}
For $d=2$ and and $\alpha=\beta \; \in \left(5/3, \; 2 \right]$, assumption \hyperlink{(LA2)(a)}{(A2a)} reduces to 
\begin{equation*} 
    \frac{4}{\alpha+1}<p<\frac{2}{\alpha-1} \qquad \text{ and } \qquad \max \left\{ \frac{2}{\alpha-1}, \; \frac{p}{p-1}\right\}<r<\frac{2p}{2-p(\alpha-1)},
\end{equation*}
where $r= \frac{p}{p-1}\; $ is also allowed if $\; \frac{p}{p-1}>\frac{d}{\alpha-1}$.
%
Moreover, the conditions in \eqref{condition-2-global-p-1}, \eqref{condition-2-global-p-2}, and  \eqref{condition-2-global-r-1} from \Cref{Theorem:Global-solutions} simplify to  
\begin{equation*}
    p \leq \frac{2}{3}\frac{2}{\alpha-1}, \qquad \frac{2}{3}\frac{2}{\alpha-1}<p< \frac{2}{\alpha-1}, \qquad \text{ and } \qquad r \leq \frac{p}{p(\alpha-1)-1},
\end{equation*}
respectively.

Therefore, in more detail, the ranges for $p$ and $r$ defined in \Cref{Global-solutions-corollary} can be specified as
\begin{equation*}
    \left\{\begin{aligned}
        \frac{4}{\alpha+1}<p\leq \frac{4}{3\alpha-3}  & \text{ and } \max \left\{ \frac{2}{\alpha-1}, \frac{p}{p-1}\right\}<r<\min \left\{ \frac{2p}{2-p(\alpha-1)},   \frac{1}{2-\alpha} \right\}, \\
         \text{or} & \\ 
        \frac{4}{3\alpha-3}<p<\frac{2}{\alpha-1}  &  \text{ and }  \max \left\{ \frac{2}{\alpha-1}, \frac{p}{p-1}\right\}<r\leq \min \left\{ \frac{p}{p(\alpha-1)-1}, \frac{1}{2-\alpha} \right\}.
    \end{aligned}\right.
\end{equation*}
In the first line, $r$ can be equal to $\frac{1}{2-\alpha}$ if $\frac{1}{2-\alpha}<\frac{2p}{2-p(\alpha-1)}$ and, in both cases, it can be equal to $\frac{p}{p-1}$ if $\; \frac{p}{p-1}>\frac{d}{\alpha-1}$.
\end{remark}
\begin{proof}
Note that the hypotheses of \Cref{Global-solutions-corollary} fall under the hypotheses of \Cref{Theorem:Global-solutions}. Consequently, we can apply \Cref{Theorem:Global-solutions} to establish the existence, uniqueness, and asymptotic behavior described in \eqref{global-estimate-rho-2} and \eqref{global-estimate-c-2} of the solution.

To prove that $\rho(\cdot,t) \in L^1(\mathbb{R}^{d})$, for $t>0$, and verify mass conservation and chemical concentration behavior, it suffices to show that $(\rho \nabla c)(\cdot, s) \in  L^{1}(\mathbb{R}^{d})$ for all $s\geq 0$. 
Indeed, by applying \hyperlink{Properties-norm-k}{\Cref{Properties-k}\ref{norm-k}} to \eqref{mild-solution-rho} and $\alpha>1$, we obtain 
\begin{equation*}
    \begin{split}
        \|\rho(\cdot, t)\|_{L^{1}} 
        & \leq \|\rho_{0}\|_{L^{1}} +  \int_{0}^{t}  \left\|\nabla K^{\alpha}_{t-s} *\left[\rho(s) \nabla c(s) \right] \right\|_{L^{1}} \mathrm{~d}s\\ 
        & \leq \|\rho_{0}\|_{L^{1}} +  C \int_{0}^{t}(t-s)^{-\frac{1}{\alpha}} \left\|\rho(s) \nabla c(s)\right\|_{L^{1}}\mathrm{~d}s \\  
        & \leq \|\rho_{0}\|_{L^{1}} +  Ct^{1-\frac{1}{\alpha}} \sup_{s\geq 0} \left\|(\rho\nabla c) (\cdot, s)\right\|_{L^{1}},
\end{split}
\end{equation*}
considering $(\rho \nabla c)(\cdot, s) \in  L^{1}(\mathbb{R}^{d})$ for all $s\geq 0$. 
Thus, $\rho(\cdot,t) \in L^1(\mathbb{R}^d)$ for every $t>0$. 

Next, integrating both sides of equation \eqref{mild-solution-rho}, we can apply \Cref{integral-k2}, for $\rho_0 \in L^1(\mathbb{R}^d)$ and $(\rho \nabla c)(\cdot, s) \in  L^{1}(\mathbb{R}^{d})$ for all $s\geq 0$. Thus, it is straightforward that $\displaystyle \int_{\mathbb{R}^d} \rho(x, t) \mathrm{~d} x=\int_{\mathbb{R}^d} \rho_0(x) \mathrm{~d} x$. 
Similarly, since $\rho(\cdot,t) \in L^1(\mathbb{R}^d)$ for every $t>0$, we integrate both sides of \eqref{def-eq-c} and apply \Cref{integral-k2} to deduce the chemical concentration behavior.

Finally, to prove that $(\rho \nabla c)(\cdot, t) \in  L^{1}(\mathbb{R}^{d})$ for all $t> 0$, observe that  $\rho \in L^{\infty}((0, \infty), L^{\frac{1}{\alpha-1}}(\mathbb{R}^{d}) \cap L^{p}(\mathbb{R}^{d}))$. This implies $\rho \in L^{\varsigma}(\mathbb{R}^{d}) $, where $\varsigma \in \left[\frac{1}{\alpha-1},p \right]$, for all $t\geq 0$. From \eqref{Global-solutions-corollary-r-1}, we have $\frac{1}{r}+\alpha-1\geq 1$. 
As a result, there is $q \in \left[\frac{1}{\alpha-1},p \right]$ such that $\frac{1}{q}+\frac{1}{r}=1$.
Therefore, $\left\|(\rho \nabla c) (\cdot,t) \right\|_{L^{1}} \leq C \left\|\rho(\cdot,t)\right\|_{L^{q}} \left\|\nabla c (\cdot,t)\right\|_{L^{r}} <\infty$.
\end{proof}

\section{Nonnegative Solutions} \label{sec:Nonnegative-Solutions}

As mentioned before, system \eqref{eq-work} describes the density of cells $\rho$ and the concentration of the chemical signal $c$. Therefore, for nonnegative initial densities, $\rho_0 \geq 0$ and $ c_0 \geq 0$, it is biologically relevant to ensure that $\rho$ and $ c$ remain nonnegative over time. 
Thus, we prove that, for a nonnegative initial condition, a sufficiently regular solution remains nonnegative.  
For that, consider the following lemma (see \Cref{definition-f-f+}).

\begin{lemma} \label{nonnegative-1}
    Let $w \in C^1\left([0, T], W^{1,p}(\mathbb{R}^d)\right)$ and $2\leq p <\infty$. Then, we obtain that 
    \begin{equation}
        \label{part-1}
         \int_{\mathbb{R}^d} - (w_{-})^{p-1} \partial_t w  \mathrm{d} x=\frac{1}{p} \frac{\mathrm{~d}}{\mathrm{~d}t} \left\| w_{-}(\cdot, t)\right\|_{L^p}^p. 
    \end{equation}
    Moreover, for $v \in W^{2, \infty}(\mathbb{R}^d)$, we obtain 
    \begin{equation}
        \label{part-2}
        \int_{\mathbb{R}^d}  (w_{-})^{p-1} \nabla \cdot \left(w  \nabla v \right) \mathrm{d} x \leq \frac{p-1}{p} \left\|\Delta v \right\|_{L^{\infty}} \left\| w_{-}(\cdot, t)\right\|_{L^p}^p, 
    \end{equation}
    for every $t \in [0, T]$.
\end{lemma}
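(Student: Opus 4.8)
The plan is to treat the two identities separately; both reduce to a chain-rule computation for the negative part combined with an integration by parts, and the hypotheses $w\in C^1([0,T],W^{1,p})$, $v\in W^{2,\infty}$ and $p\ge 2$ are precisely what makes these manipulations legitimate. Throughout I use the standard facts that $w_-=\max(-w,0)\ge 0$ is weakly differentiable whenever $w$ is, with $\nabla w_-=-\nabla w\,\mathbf 1_{\{w<0\}}$, and that on the support $\{w<0\}$ of $w_-$ one has $w=-w_-$ and $\nabla w=-\nabla w_-$ (the Stampacchia chain rule).

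For \eqref{part-1} I would start from the observation that, since $p\ge 2$, the scalar map $G(s)=(s_-)^p$ is of class $C^1$ with $G'(s)=-p(s_-)^{p-1}$ (the kink at $s=0$ is absorbed because $p-1\ge 1$). Since $t\mapsto w(\cdot,t)$ is $C^1$ into $L^p$ and $G'$ has $(p-1)$-growth, the map $t\mapsto\int G(w)\,dx=\|w_-(\cdot,t)\|_{L^p}^p$ is differentiable and one may differentiate under the integral sign, giving $\frac{d}{dt}\int (w_-)^p\,dx=\int -p(w_-)^{p-1}\partial_t w\,dx$; dividing by $p$ yields \eqref{part-1}. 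The differentiation under the integral is justified by dominated convergence, using the continuity of $\partial_t w$ in $L^p$ and the bound $(w_-)^{p-1}\le |w|^{p-1}\in L^{p'}$.

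For \eqref{part-2} I would first expand $\nabla\cdot(w\nabla v)=\nabla w\cdot\nabla v+w\,\Delta v$, which is meaningful since $v\in W^{2,\infty}$ forces $\nabla v,\Delta v\in L^\infty$. Splitting the integral accordingly and using $w=-w_-$, $\nabla w=-\nabla w_-$ on $\{w<0\}$ together with the identity $(w_-)^{p-1}\nabla w_-=\tfrac1p\nabla[(w_-)^p]$, the first term becomes $\int (w_-)^{p-1}\nabla w\cdot\nabla v\,dx=-\tfrac1p\int\nabla[(w_-)^p]\cdot\nabla v\,dx$ and the second becomes $\int (w_-)^{p-1}w\,\Delta v\,dx=-\int (w_-)^p\,\Delta v\,dx$. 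Integrating the first term by parts (the flux at infinity vanishes because $(w_-)^p\nabla v$ is integrable with integrable divergence) turns it into $\tfrac1p\int (w_-)^p\,\Delta v\,dx$, so the two contributions combine into the exact identity $\int (w_-)^{p-1}\nabla\cdot(w\nabla v)\,dx=-\tfrac{p-1}{p}\int (w_-)^p\,\Delta v\,dx$. Bounding $-\Delta v\le |\Delta v|\le\|\Delta v\|_{L^\infty}$ then gives \eqref{part-2}.

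I expect the main obstacle to be the rigorous justification of the Sobolev chain rule and the vanishing of the boundary term at infinity, since $w_-$ is only obtained by a Lipschitz composition and its gradient jumps across $\{w=0\}$. The cleanest way to dispel any doubt is to regularize: replace $s\mapsto s_-$ by a smooth convex approximation $\beta_\varepsilon$ with $\beta_\varepsilon\to(\,\cdot\,)_-$ and $\beta_\varepsilon',\beta_\varepsilon''$ uniformly controlled, carry out both computations for the smooth composite (where every derivative and integration by parts is classical), and pass to the limit $\varepsilon\to0$ by dominated convergence, the hypothesis $p\ge 2$ ensuring that all limiting integrands are integrable. Once this approximation is in place, both \eqref{part-1} and \eqref{part-2} follow from the formal computations described above.
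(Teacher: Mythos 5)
Your proof is correct and follows essentially the same route as the paper's: the Stampacchia chain rule for $w_-$ (giving $-(w_-)^{p-1}\partial_t w=\tfrac1p\partial_t (w_-)^p$ and $(w_-)^{p-1}\nabla w=-\tfrac1p\nabla[(w_-)^p]$) combined with integration by parts, arriving at the same exact identity $\int_{\mathbb{R}^d}(w_-)^{p-1}\nabla\cdot(w\nabla v)\,\mathrm{d}x=-\tfrac{p-1}{p}\int_{\mathbb{R}^d}(w_-)^p\Delta v\,\mathrm{d}x$. The only differences are cosmetic — you expand the divergence before integrating by parts where the paper moves the divergence onto $(w_-)^{p-1}$ first, and your justifications (the $C^1$ function $G(s)=(s_-)^p$, dominated convergence, the vanishing flux argument, and the smoothing backstop) are if anything more careful than the paper's.
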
 
\begin{proof}
Let $w \in C^1\left([0, T], W^{1,p}(\mathbb{R}^d)\right)$  and $p \geq 2$. Since $w=w_{+}-w_{-}$ and, in view of \eqref{derivpm}, $w_{-}\partial_t(w_{+})=0$  then
\begin{equation*}
    - (w_{-})^{p-1} \partial_t w = (w_{-})^{p-1} \partial_t (w_{-}) = \frac{1}{p} \partial_t (w_{-})^{p},  
\end{equation*}
which implies \eqref{part-1}. 

Moreover, we have in the weak sense,
\begin{eqnarray*}
     w \nabla (w_{-})^{p-1}&=&\left(p-1\right)\left((w_{-})^{p-2}  \nabla (w_{-})\right) w
    =-\left(p-1\right)(w_{-})^{p-1}  \nabla (w_{-})\\
    &=&-\frac{p-1}{p} \nabla (w_{-})^{p}.
\end{eqnarray*}

Then, assuming $v \in W^{2, \infty}(\mathbb{R}^d)$, we obtain 
\begin{align*}
    \int_{\mathbb{R}^d} (w_{-})^{p-1} \nabla \cdot \left(w  \nabla v \right) \mathrm{d} x & =-\frac{p-1}{p} \int_{\mathbb{R}^d}   w_{-}^{p} \, \nabla \cdot \nabla v \mathrm{~d} x \\
    &\leq \frac{p-1}{p} \left\|\Delta v\right\|_{L^{\infty}} \left\| w_{-}(\cdot, t)\right\|_{L^p}^p.
\end{align*} 
\end{proof}
\begin{theorem} \label{Thm:positivity-local-in-time}
  
   Let $(\rho, \nabla c)$ be the local-in-time mild solution of system \eqref{eq-work} established by \Cref{Theorem:Higher-Regularity-local}, with a nonnegative initial condition $(\rho_0, \nabla c_0)$. 
    Assume that the parameters $p$ and $r$ in \Cref{Theorem:Higher-Regularity-local}, in addition to \ref{(LA2)}, satisfy $p\geq 2$ and $r< \infty $ if $\beta>2$. Additionally, let the regularity index $N$ from \Cref{Theorem:Higher-Regularity-local} be given by $N=\max\{2, 2\lceil \beta/2 \rceil - 1 \}$.
    Then, the local-in-time solution satisfies the positivity conditions, 
    \[
        \rho(\cdot,t) \geq 0 \quad \text{and} \quad c(\cdot,t) \geq 0, 
    \]
    for all $t \in [0, T]$.  
\end{theorem}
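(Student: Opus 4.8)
The plan is to prove the two positivity statements sequentially, by an $L^p$ energy estimate on the negative parts $\rho_-$ and $c_-$, combining \Cref{nonnegative-1} with a one-sided monotonicity property of the fractional Laplacian. First I would invoke \Cref{Theorem:Higher-Regularity-local} with $N=\max\{2,2\lceil\beta/2\rceil-1\}$ to place the solution in a regularity class where all the computations are licit: it yields $\rho\in C([0,T],W^{N,p}(\mathbb R^d))$, $\nabla c\in C([0,T],W^{N,r}(\mathbb R^d))$, and, through the second equation of \eqref{eq-work}, $c\in C([0,T],W^{N+1,r}(\mathbb R^d))$. The hypothesis $p\ge 2$ is needed so that $\phi(s)=\tfrac1p(s_-)^p$ is convex and $C^{1,1}$ (hence $\phi'(s)=-(s_-)^{p-1}$ is a legitimate test function), while the choice of $N$ and the condition $r<\infty$ for $\beta>2$ are exactly what is needed, via Sobolev embedding, to guarantee that $\Delta c\in L^\infty$ uniformly on $[0,T]$ and that $\partial_t\rho$ depends continuously on $t$ in $W^{1,p}$, so that \Cref{nonnegative-1} is applicable. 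Note that larger $\beta$ means a higher-order diffusion for $c$, which is why $N$ is forced to grow with $\beta$.

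For $\rho\ge 0$ I would test the first equation of \eqref{eq-work} against $-(\rho_-)^{p-1}$ and integrate. The time-derivative term is treated by \eqref{part-1}, producing $\tfrac1p\tfrac{d}{dt}\|\rho_-\|_{L^p}^p$; the chemotactic term is bounded by \eqref{part-2} with $w=\rho$, $v=c$, giving $\tfrac{p-1}{p}\|\Delta c\|_{L^\infty}\|\rho_-\|_{L^p}^p$. The diffusion term requires the key inequality
\begin{equation*}
\int_{\mathbb R^d}(\rho_-)^{p-1}\,\Lambda^\alpha\rho\,\mathrm dx\le 0,
\end{equation*}
which holds because $\alpha\in(1,2]$: writing $\Lambda^\alpha$ through its singular-integral representation and symmetrizing in $x,y$, the integrand is governed by $\big[(\rho_-)^{p-1}(x)-(\rho_-)^{p-1}(y)\big]\big(\rho(x)-\rho(y)\big)\le 0$, since $s\mapsto(s_-)^{p-1}$ is non-increasing (equivalently, this is the C\'ordoba--C\'ordoba convexity inequality applied to $\phi$, together with $\int_{\mathbb R^d}\Lambda^\alpha\phi(\rho)\,\mathrm dx=0$, as in \Cref{integral-k2}). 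Combining the three contributions gives $\tfrac{d}{dt}\|\rho_-\|_{L^p}^p\le (p-1)\|\Delta c(\cdot,t)\|_{L^\infty}\|\rho_-\|_{L^p}^p$, and since $\rho_0\ge 0$ forces $\|\rho_-(\cdot,0)\|_{L^p}=0$ while $t\mapsto\|\Delta c(\cdot,t)\|_{L^\infty}$ is bounded, Gr\"onwall's lemma yields $\rho_-\equiv 0$, i.e. $\rho(\cdot,t)\ge 0$. It is essential here that this step uses only the boundedness of $\|\Delta c\|_{L^\infty}$ and not the sign of $c$, so it can be carried out before the positivity of $c$ is known.

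With $\rho\ge 0$ in hand, $c\ge 0$ follows most transparently when $\beta\in(1,2]$. In that range $K^\beta_t=\mathcal F^{-1}(e^{-t|\xi|^\beta})\ge 0$ is the transition density of a symmetric $\beta$-stable process, so in the mild representation of $c$ derived from \eqref{def-eq-c} both the contribution $e^{-\frac{\gamma}{\tau}t}K^\beta_{t/\tau}*c_0$ and the Duhamel integral $\int_0^t\frac1\tau e^{\gamma(s-t)/\tau}K^\beta_{(t-s)/\tau}*\rho(s)\,\mathrm ds$ are nonnegative, whence $c\ge 0$. Equivalently one may test the $c$-equation against $-(c_-)^{p-1}$: the source term $-\int_{\mathbb R^d}(c_-)^{p-1}\rho\,\mathrm dx\le 0$ (using $\rho\ge 0$) and the degradation term $-\gamma\int_{\mathbb R^d}(c_-)^p\,\mathrm dx\le 0$ both carry the favourable sign, and the diffusion satisfies $\int_{\mathbb R^d}(c_-)^{p-1}\Lambda^\beta c\,\mathrm dx\le 0$ by the same monotonicity argument, so Gr\"onwall again forces $c_-\equiv 0$.

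The main obstacle is the regime $\beta>2$. There the comparison/maximum principle genuinely fails for the higher-order operator $\Lambda^\beta$: the kernel $K^\beta_t$ is sign-changing (as $e^{-t|\xi|^\beta}$ ceases to be positive-definite), so the clean Duhamel argument is unavailable, and the pointwise monotonicity that delivered $\int(c_-)^{p-1}\Lambda^\beta c\le 0$ no longer holds — indeed the naive energy estimate only produces a dissipation term competing with an indefinite cross term that does not, by itself, close. This is precisely where the strengthened hypotheses $r<\infty$ and the enlarged index $N=\max\{2,2\lceil\beta/2\rceil-1\}$ must be used: the energy identity for $c_-$ has to be justified at the level of $W^{N,\cdot}$ regularity and the non-sign-definite high-order contribution of $\Lambda^\beta$ controlled by the dissipation it generates together with the already-established sign $\rho\ge 0$. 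Making this absorption rigorous is the delicate heart of the argument, and I expect it to be the step demanding the most care.
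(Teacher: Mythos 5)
Your treatment of $\rho\geq 0$ follows the paper's argument: test with $-(\rho_-)^{p-1}$, use \eqref{part-1}--\eqref{part-2}, a C\'ordoba--C\'ordoba-type sign inequality for the $\Lambda^\alpha$ term, and Gr\"onwall. One caveat there: you justify the boundedness of $\|\Delta c(\cdot,t)\|_{L^\infty}$ by Sobolev embedding from $W^{N+1,r}$, which need not hold (it would require roughly $(N-1)r>d$); the paper instead proves $\Delta c\in L^1((0,T],L^\infty(\mathbb{R}^d))$ directly from the Duhamel formula \eqref{def-eq-c} with the kernel estimates of \Cref{Properties-k}\ref{norm-k}, using $p>d/\beta$ and $q_0>d/\beta$ -- the resulting bound is singular as $t\to 0$ but time-integrable, which is all Gr\"onwall needs. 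Your kernel-positivity argument for $c\geq 0$ when $\beta\in(1,2]$ is a valid alternative to the paper's energy method in that range.

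The genuine gap is the case $\beta>2$, which you explicitly leave open. You assert that the monotonicity inequality $\int_{\mathbb{R}^d}(c_-)^{q-1}\Lambda^\beta c\,\mathrm{d}x\leq 0$ ``no longer holds'' for $\beta>2$ and then gesture at an unexecuted absorption scheme (controlling an indefinite high-order term by dissipation) as ``the delicate heart of the argument.'' This is not the paper's route, and no such absorption is needed: the paper's \Cref{Positivity-Lemma-2} asserts precisely the inequality \eqref{part-4}, $\int_{\mathbb{R}^d}\pm u_\pm^{q-1}\Lambda^\beta u\,\mathrm{d}x\geq 0$, for \emph{all} exponents $\beta\geq 0$, provided $u\in W^{2\lceil\beta/2\rceil,q}(\mathbb{R}^d)$ with $1<q<\infty$; its proof rests on \Cref{nonnegative-2} and the extension of the C\'ordoba--C\'ordoba argument through the equivalence of the higher-order finite-difference representation \eqref{eq-definition-fractional-Laplacian-integral-2} with the singular-integral form \eqref{eq-definition-fractional-Laplacian-integral}. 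This is exactly where the hypotheses you flagged enter, but for a different reason than you guessed: $N=\max\{2,2\lceil\beta/2\rceil-1\}$ is chosen so that $c\in C([0,T],W^{N+1,q})=C([0,T],W^{2\lceil\beta/2\rceil,q})$, the regularity demanded by \Cref{Positivity-Lemma-2} at exponent $\beta$, and $r<\infty$ when $\beta>2$ guarantees a finite integrability index $q$ (one takes $q=r$) at which that lemma applies. With this lemma the energy identity for $c_-$ closes with every term carrying a favourable sign, $\tfrac{1}{q}\tfrac{\mathrm{d}}{\mathrm{d}t}\|c_-\|_{L^q}^q\leq -\bigl[\int_{\mathbb{R}^d}(c_-)^{q-1}\rho\,\mathrm{d}x+\gamma\|c_-\|_{L^q}^q\bigr]\leq 0$, uniformly in $\beta\in(1,d]$, and $c_0\geq 0$ gives $c_-\equiv 0$. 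Without supplying either the paper's lemma or a rigorous substitute, your proof covers only $\beta\leq 2$ and does not establish the theorem as stated.
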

\begin{proof}
The idea is to prove that $\|\rho_{-}\|_{L^p}=0$ and $\|\nabla (c_{-})\|_{L^p}=0$. To do so, we will use \cref{nonnegative-1}.  Thus, let us show that $\Delta c \in  L^1([0, T], L^{\infty}(\mathbb{R}^d))$. 
Applying \eqref{estimativa-1} and \eqref{estimativa-2} to \eqref{def-eq-c}, we obtain the following estimate for $0<t\leq T$ 
\begin{align*}
    \| \Delta c(\cdot, t)\|_{L^{\infty}} &\leq  e^{-\frac{\gamma}{\tau} t}\left\| K^{\beta}_{\frac{t}{\tau}} *  \Delta c_{0}\right\|_{L^{\infty}}+\int_{0}^{t} \frac{1}{\tau} e^{\frac{\gamma}{\tau} (s-t)}\left\| K^{\beta}_{\frac{t-s}{\tau}} * \Delta \rho(\cdot,s)\right\|_{L^{\infty}} \mathrm{d} s \\
    &\leq C e^{-\frac{\gamma}{\tau} t} t^{-\frac{1}{\beta}\frac{d}{q_0}} \left\| \Delta c_{0}\right\|_{L^{q_0}}+C \int_{0}^{t} \frac{1}{\tau} e^{\frac{\gamma}{\tau} (s-t)} \left(\frac{s-t}{\tau}\right)^{-\frac{d}{\beta}\frac{1}{p}} \|\Delta \rho(\cdot,s)\|_{L^{p}} \mathrm{d} s,  \\
    &\leq C e^{-\frac{\gamma}{\tau} t} t^{-\frac{1}{\beta}\frac{d}{q_0}}  \left\| \Delta c_{0}\right\|_{L^{q_0}} \\
    & \hspace{2cm} +C  \left(\int_{0}^{t} \frac{1}{\tau} e^{\frac{\gamma}{\tau} (s-t)} \left(\frac{s-t}{\tau}\right)^{-\frac{d}{\beta}\frac{1}{p}} \mathrm{d} s\right) \sup_{t \in[0, T]} \left\|\rho(\cdot, t)\right\|_{W^{2,p}}, 
\end{align*}
since, as  $N \geq 2$, the assumption $\nabla c_0 \in W^{N,q_0}(\mathbb{R}^d)$ implies $\Delta c_0 \in L^{q_0}(\mathbb{R}^d)$, and $\rho \in C([0,T], W^{2,p}(\mathbb{R}^d))$. 
Moreover, from \Cref{Theorem:Higher-Regularity-local}, the parameter $p$ satisfies \ref{(LA2)}, guaranteeing $p>d/\beta$. This ensures the inner integral above converges, yielding, for $\gamma \neq 0$, 
\begin{equation*}
    \begin{split}
        \int_{0}^{t} \frac{1}{\tau} e^{\frac{\gamma}{\tau} (s-t)}\left(\frac{t-s}{\tau}\right)^{-\frac{d}{\beta}\frac{1}{p}}  \mathrm{d} s &=\gamma^{-\left(1-\frac{d}{\beta}\frac{1}{p}\right)} \int_{0}^{\gamma t/\tau} e^{-u} u^{\left(1-\frac{d}{\beta}\frac{1}{p}\right)-1} \mathrm{d} u \\
        & \leq C \Gamma \left(1-\frac{d}{\beta}\frac{1}{p}\right) < \infty,
    \end{split}
\end{equation*}
 and, for $\gamma =0$,  
\begin{equation*}
    \int_{0}^{t} \frac{1}{\tau} \left(t-s\right)^{-\frac{d}{\beta}\frac{1}{p}}  \mathrm{d} s \leq C t^{1-\frac{d}{\beta}\frac{1}{p}}.
\end{equation*}

Furthermore, from \Cref{Theorem:Higher-Regularity-local}, the parameter $q_0$ for the initial condition must be either $q_0=r$ or $q_0= \wp$, where $r$ satisfies hypotheses \ref{(LA2)} and $\wp$ satisfies \eqref{codition-r-0-Theorem}. In both cases, $q_0>d/\beta$, since $\beta>1$, $r>\frac{d}{\alpha-1}$ and $\wp\geq \frac{d}{\alpha-1}$. Consequently, the first term in the estimate above is integrable over $[0,T]$, leading to $\Delta c \in L^1((0,T], L^\infty(\mathbb{R}^d))$.  

Now, we multiply the first equation of  \eqref{eq-work} by $(\rho_{-})^{p-1}$ and integrate over $\mathbb{R}^d$. Since, from  the hypotheses $p\geq 2$ and, as $p$ satisfies \ref{(LA2)}, $p <\infty$, we can use \eqref{part-1}, \eqref{part-2}, and \Cref{Positivity-Lemma-2} to obtain 
\begin{equation*}
    \frac{1}{p} \frac{\mathrm{~d}}{\mathrm{~d}t} \left\|  \rho_{-}(\cdot, t)\right\|_{L^p}^p \leq \frac{p-1}{p} \left\|\Delta c (\cdot, t)\right\|_{L^\infty} \left\|  \rho_{-}(\cdot, t)\right\|_{L^p}^p.
\end{equation*}
Setting $\eta(t)=\left\|  \rho_{-}(\cdot, t)\right\|_{L^p}^p$ and $\phi(t)=\left(p-1\right)\left\|\Delta c (\cdot, t)\right\|_{L^\infty}$, by Gronwall's inequality, we get
\begin{equation*}
    \eta(t) \leq e^{\int_0^t \phi(s) d s}\eta(0)
\end{equation*}
for all $0 \leq t \leq T$, as $\phi$ is nonnegative and summable on $[0, T]$ (as proved previously). Since $\eta(0)=0$, as $\rho_0\geq 0$, it follows that $\eta \equiv 0$ on  $[0, T]$. 
Hence, $\rho_{-}= 0$ and, consequently, $\rho \geq 0$. 

Now, from \Cref{Theorem:Higher-Regularity-local}, 
we have that, if $\beta \in (1,2]$, then $\nabla c \in C((0, T], W^{N,q}(\mathbb{R}^{d}))$ for $q>2$. Therefore, since $r$ satisfies \ref{(LA2)} and, by hypothesis,  $r< \infty $ if $\beta>2$, it follows from \Cref{Theorem:Higher-Regularity-local} that there exists $2<q<\infty$, such that $\nabla c \in C([0, T], W^{N,q}(\mathbb{R}^{d}))$ for any $\beta \in (1,d]$ (with $q=r$ if $\beta>2$). 
Then, by multiplying the second equation of \eqref{eq-work} by $(c_{-})^{q-1}$ and integrate over $\mathbb{R}^d$, using \eqref{part-1} and \Cref{Positivity-Lemma-2}, we find 
\begin{equation*}
    \frac{1}{q} \frac{\mathrm{~d}}{\mathrm{~d}t} \left\|  c_{-}(\cdot, t)\right\|_{L^q}^q \leq -\left[\int_{\mathbb{R}^d} \left(c_{-}(x, t) \right)^{q-1}  \rho(x, t) \mathrm{d}x +\gamma \left\|  c_{-}(\cdot, t)\right\|_{L^q}^q \right]\leq 0,       
\end{equation*}
since $\rho\geq 0$ and $c_{-} \geq 0$ implies that the first term inside the brackets is nonnegative. 
Then, as $\left\|  c_{-}(\cdot, 0)\right\|_{L^q}=0$ (because $c_0 \geq 0$), it follows that $c_{-}= 0$, hence $c\geq 0$. 
This completes the proof.
\end{proof}

\begin{theorem}
   Let $(\rho, \nabla c)$ be the global-in-time mild solution of system \eqref{eq-work} guaranteed by  \Cref{Theorem:Higher-Regularity-global}, with a nonnegative initial condition $(\rho_0, \nabla c_0)$. 
    Assume that $p$ and $r$ in \Cref{Theorem:Higher-Regularity-global}, along with conditions \ref{(LA2)} and \ref{(GA5)}, satisfy $p\geq 2$ and $r< \infty $. Furthermore, set the regularity index $N$ to $N=\max\{2, 2\lceil \beta/2 \rceil - 1 \}$.
    Then, the global-in-time solution satisfies the positivity conditions for all $t \geq 0$. 
\end{theorem}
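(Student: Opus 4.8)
The plan is to run the argument of \Cref{Thm:positivity-local-in-time} essentially verbatim, now fed by the global-in-time higher-regularity solution from \Cref{Theorem:Higher-Regularity-global} rather than the local one. Positivity localizes in time, so it suffices to prove $\rho(\cdot,t)\ge 0$ and $c(\cdot,t)\ge 0$ on an arbitrary finite interval $[0,T]$; the Gronwall scheme then yields the conclusion on each such interval, and their union is $[0,\infty)$. Accordingly, the only genuinely new work is to re-derive the integrability input of that scheme, namely $\Delta c\in L^1\big((0,T),L^\infty(\mathbb{R}^d)\big)$ for every $T>0$, from the global estimates.

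First I would bound $\|\Delta c(\cdot,t)\|_{L^\infty}$. Differentiating \eqref{def-eq-c} twice in space and applying \eqref{estimativa-1} and \eqref{estimativa-2} gives
\[
\|\Delta c(\cdot,t)\|_{L^\infty}\le C e^{-\frac{\gamma}{\tau}t}t^{-\frac{d}{\beta p_2}}\|\Delta c_0\|_{L^{p_2}}+C\int_0^t \tfrac1\tau e^{\frac{\gamma}{\tau}(s-t)}\Big(\tfrac{t-s}{\tau}\Big)^{-\frac{d}{\beta p}}\|\Delta\rho(\cdot,s)\|_{L^p}\,\mathrm{d}s .
\]
The choice $N=\max\{2,2\lceil\beta/2\rceil-1\}\ge 2$ guarantees $\Delta c_0\in L^{p_2}(\mathbb{R}^d)$ (from $\nabla c_0\in W^{N,p_2}$) and controls $\|\Delta\rho(\cdot,s)\|_{L^p}$ through the $W^{N,p}$-version of the weighted decay estimate \eqref{global-estimate-rho}, i.e. $\|\rho(\cdot,s)\|_{W^{N,p}}\lesssim s^{-\sigma}$. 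The parameter constraints \ref{(LA2)} and \ref{(GA5)} force $p>d/\beta$ and $p_2>d/\beta$, so the $\Delta c_0$-term is integrable near $t=0$; and since $\tfrac{d}{\beta p}<1$ and $\sigma<1$, \Cref{Lemma-Estimativa-Integral} bounds the convolution by $C\,t^{1-\frac{d}{\beta p}-\sigma}$, which is likewise integrable over $[0,T]$. Hence $\Delta c\in L^1((0,T),L^\infty)$, and $\phi(t)=(p-1)\|\Delta c(\cdot,t)\|_{L^\infty}$ is nonnegative and summable on $[0,T]$.

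With this in hand I would reproduce the two energy estimates. Multiplying the first equation of \eqref{eq-work} by $(\rho_-)^{p-1}$, integrating, and using \eqref{part-1}--\eqref{part-2} of \Cref{nonnegative-1} together with the fractional positivity inequality \Cref{Positivity-Lemma-2} for the $\Lambda^\alpha\rho$ term (where $p\ge 2$ is used) yields $\tfrac1p\tfrac{\mathrm d}{\mathrm dt}\|\rho_-(\cdot,t)\|_{L^p}^p\le \tfrac1p\phi(t)\|\rho_-(\cdot,t)\|_{L^p}^p$; since $\|\rho_-(\cdot,0)\|_{L^p}=0$ because $\rho_0\ge 0$, Gronwall gives $\rho_-\equiv 0$, i.e. $\rho\ge 0$ on $[0,T]$. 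For $c$, the hypothesis $r<\infty$ together with \ref{(LA2)} lets me fix $2<q<\infty$ with $\nabla c\in C([0,\infty),W^{N,q})$; multiplying the second equation by $(c_-)^{q-1}$, integrating, and invoking \eqref{part-1} and \Cref{Positivity-Lemma-2} gives a right-hand side $-\big[\int_{\mathbb{R}^d}(c_-)^{q-1}\rho\,\mathrm dx+\gamma\|c_-\|_{L^q}^q\big]\le 0$, because $\rho\ge 0$ and $c_-\ge 0$. Thus $\|c_-(\cdot,t)\|_{L^q}$ is nonincreasing and vanishes at $t=0$ (as $c_0\ge 0$), so $c\ge 0$ on $[0,T]$.

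The main obstacle is the first step: unlike the local case, the global a priori bounds on $\rho$ are only weighted in time, so near $s=0$ one has the singular factor $s^{-\sigma}$ inside the time-convolution defining $\Delta c$. The delicate point is that this near-zero singularity, combined with the singular kernel $(t-s)^{-d/(\beta p)}$ and the $\Delta c_0$ contribution $t^{-d/(\beta p_2)}$, must still integrate to a finite quantity on $[0,T]$; this is exactly what the strict inequalities $p,p_2>d/\beta$ (from \ref{(LA2)} and \ref{(GA5)}) and $\sigma<1$ secure. Once local-in-time integrability of $\Delta c$ is established, the positivity follows from the same Gronwall bookkeeping as in \Cref{Thm:positivity-local-in-time}.
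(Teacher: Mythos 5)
Your proposal follows essentially the same route as the paper, whose proof of this theorem is simply the remark that one extends the argument of \Cref{Thm:positivity-local-in-time} to the global setting; your version additionally fills in the one genuinely new point — handling the time-weighted global bound $\|\rho(\cdot,s)\|_{W^{N,p}}\lesssim s^{-\sigma}$, whose singularity at $s=0$ is absorbed by \Cref{Lemma-Estimativa-Integral} using $\sigma<1$ and $p,p_2>d/\beta$ — which the paper leaves implicit. The argument is correct.
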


The proof of the above theorem follows by extending the arguments of \Cref{Thm:positivity-local-in-time} to the global case, with estimates extended over $(0,\infty)$ instead of $(0,T]$. 

\appendix

\section{Parameters for Lebesgue Spaces} 
\label{Parameters-for-Lebesgue-Spaces}

In \cref{subsec:Local-existence,subsec:Global-existence}, suitable restrictions are imposed on the parameters of the system and on the parameters related to the space of initial data and solution, as described in \hyperlink{Comments-about-the-Proof}{  \textbf{Comments about the Proof. (1)}}.

In this Appendix, we present the proof of the existence of $p$ and $r$ satisfying those restrictions and show that they imply \eqref{condition-geral} and  \eqref{condicao-L-unica} for the local results, and \eqref{condition-geral}, \eqref{condition-global-3}, \eqref{definition-p2-apendice}, $1 \leq p_1 < p$ and $1 < p_2 < r $ for the global results. 
Such conditions constitute the hypotheses in  H\"older's inequality and in \hyperlink{Properties-norm-k}{Lemmas \ref{Properties-k}\ref{norm-k}} and \ref{Lemma-Estimativa-Integral}.
\begin{remark}
    Throughout this appendix, consider $\alpha \in (1,2]$, $\beta \in (1,d]$, and $d \geq 2$.
\end{remark}
\begin{definition} \label{Definition-sigma-appendix}
Let $1 < p\leq r \leq \infty$. We define 
\begin{equation}
    \label{definition-sigma-apendice}
     \sigma=2-\frac{1}{\alpha}\left(\frac{d}{r}+1 \right)-\frac{d}{\beta}\left(\frac{1}{p}-\frac{1}{r}\right)-\frac{1}{\beta}.
\end{equation}
\end{definition}
\begin{lemma}\label{lemma-conditions-geral-global} 
Consider $p$ and $r$ satisfying \ref{(LA2)}, that is, 
\begin{subequations}
    \label{definition-p-q-appendix}
\begin{multline}
    \label{definition-p-q-appendix-A} 
    \hspace{0.8cm}  \max \left\{ \frac{2d}{d+\beta-1}, \; \frac{d}{\alpha+\beta-2}\right\}< p \leq \frac{d}{\beta -1} \quad \text{ and } \\
     \quad   \max  \left\{p, \; \frac{p}{p-1}, \; \frac{d}{\alpha-1}\right\}< r  <\frac{pd}{d-p(\beta-1)},  \quad \text{ or }\hspace{0.5cm} 
\end{multline} 
\begin{equation}
    \label{definition-p-q-appendix-B}
    p > \frac{d}{\beta -1}  \qquad \text{ and } \qquad  r>\max  \left\{p, \; \frac{p}{p-1}, \; \frac{d}{\alpha-1}\right\},  
\end{equation}
\end{subequations}
where, in both cases, the equality $r=\max  \left\{p, \; \frac{p}{p-1}\right\}$ is possible if $\max  \left\{p, \; \frac{p}{p-1}\right\}>\frac{d}{\alpha-1}$. 

\hypertarget{Part 1}{\textbf{Part 1.}} Assume $2\beta \left(\alpha-1\right)-\alpha\geq 0$. Then, there exist $p$ and $r$ satisfying \eqref{definition-p-q-appendix} with 
\begin{equation}
   \label{condition-global-p-1}
   p<\frac{d\alpha}{2\beta \left(\alpha-1\right)-\alpha}. 
\end{equation} 

\hypertarget{Part 2}{\textbf{Part 2:}} Consider $p$ and $r$ satisfying \eqref{definition-p-q-appendix}. Then $p>1$, and the following conditions hold

\begin{subequations}
    \label{condition-geral}
    \noindent\begin{minipage}{.3\linewidth}
    \begin{equation}
        \label{condition-geral(a)}
        \frac{1}{\alpha}\left(\frac{d}{r}+1 \right)<1,  
    \end{equation}
    \end{minipage}
    \begin{minipage}{.4\linewidth}
    \begin{equation}
        \label{condition-geral(b)}
        \frac{d}{\beta}\left(\frac{1}{p}-\frac{1}{r}\right)+\frac{1}{\beta}<1,
    \end{equation}
    \end{minipage}
    \begin{minipage}{.26\linewidth}
    \begin{equation}
        \label{condition-geral(c)}
        \frac{1}{p}+\frac{1}{r}\leq 1.
    \end{equation}    
    \end{minipage}
\end{subequations}
\\

Additionally, if $2\beta \left(\alpha-1\right)-\alpha< 0$ or $p$ also satisfies \eqref{condition-global-p-1}, then  

\begin{subequations}
    \label{condition-global-3}
    \noindent\begin{minipage}{.47\linewidth}
    \begin{equation}
        \label{condition-global-3(d)}
        \sigma+\frac{1}{2} \left[\frac{d}{\beta}\left(\frac{1}{p}-\frac{1}{r}\right)+\frac{1}{\beta}\right]<1,
    \end{equation}
    \end{minipage}
    \begin{minipage}{.51\linewidth}
    \begin{equation}
        \label{condition-global-3(e)}
        \frac{1}{\alpha}\left(\frac{d}{r}+1\right)-1<\sigma<\frac{1}{\alpha}\left(\frac{d}{r}+1\right). 
    \end{equation}
    \end{minipage}
\end{subequations}
\end{lemma}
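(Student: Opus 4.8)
The plan is to treat the statement as a list of explicit algebraic inequalities in $p,r,\alpha,\beta,d$ and to show that each one is either a direct rearrangement of a defining inequality of \ref{(LA2)} (i.e.\ of \eqref{definition-p-q-appendix}) or reduces, after one reduction step, to the extra bound \eqref{condition-global-p-1}. Throughout I would abbreviate
\[
A=\tfrac{1}{\alpha}\Bigl(\tfrac{d}{r}+1\Bigr),\qquad B=\tfrac{d}{\beta}\Bigl(\tfrac{1}{p}-\tfrac{1}{r}\Bigr)+\tfrac{1}{\beta},
\]
so that $\sigma=2-A-B$ by \eqref{definition-sigma-apendice}. This bookkeeping makes the structure transparent and lets me recycle the bounds $A<1$, $B<1$ once they are established.

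For \hyperlink{Part 2}{\textbf{Part 2}} I would first note $p>1$: in \eqref{definition-p-q-appendix-A} the lower bound $2d/(d+\beta-1)$ exceeds $1$ because $\beta\le d$, while in \eqref{definition-p-q-appendix-B} one has $p>d/(\beta-1)\ge d/(d-1)>1$. The inequalities \eqref{condition-geral} are then pure rearrangements after isolating $1/r$: \eqref{condition-geral(a)} is exactly $r>d/(\alpha-1)$; \eqref{condition-geral(c)} is exactly $r\ge p/(p-1)$; and \eqref{condition-geral(b)} is $\tfrac1p-\tfrac1r<\tfrac{\beta-1}{d}$, which follows from the upper bound $r<pd/(d-p(\beta-1))$ in case \eqref{definition-p-q-appendix-A} and trivially from $p>d/(\beta-1)$ in case \eqref{definition-p-q-appendix-B}. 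I would dispatch each in a line or two, taking care of the admissible equality $r=\max\{p,p/(p-1)\}$ that is still allowed when that maximum exceeds $d/(\alpha-1)$.

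The additional conditions \eqref{condition-global-3} are the substance. The key observation is that in the $A,B$ variables the left inequality of \eqref{condition-global-3(e)} reads $2A+B<3$, which is automatic from $A<1$ and $B<1$, while both the right inequality of \eqref{condition-global-3(e)} and all of \eqref{condition-global-3(d)} are equivalent to the single inequality $A+B/2>1$. Thus \eqref{condition-global-3} collapses to proving $A+B/2>1$. I would regard $A+B/2$ as an affine function of $1/r$ whose coefficient $\tfrac1\alpha-\tfrac{1}{2\beta}$ is positive (since $\alpha\le2<2\beta$), so $A+B/2$ is minimized at the largest admissible $r$. In case \eqref{definition-p-q-appendix-A} this supremum is $pd/(d-p(\beta-1))$, where $B\to1$ and the claim reduces to $p\le 2d/(\alpha+2\beta-4)$; this holds \emph{without} any extra hypothesis because the case-(a) ceiling $p\le d/(\beta-1)$ already forces it (one checks $d/(\beta-1)\le 2d/(\alpha+2\beta-4)$ precisely when $\alpha\le2$). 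In case \eqref{definition-p-q-appendix-B} the largest $r$ is $\infty$, and letting $r\to\infty$ the claim reduces to $\tfrac{d}{p}\ge\tfrac{2\beta(\alpha-1)-\alpha}{\alpha}$, i.e.\ exactly \eqref{condition-global-p-1} when $2\beta(\alpha-1)-\alpha\ge0$, and automatically true when $2\beta(\alpha-1)-\alpha<0$ — which explains the dichotomy in the hypothesis.

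Finally, \hyperlink{Part 1}{\textbf{Part 1}} is a nonemptiness statement: assuming $2\beta(\alpha-1)-\alpha\ge0$, I would verify that the case-(a) interval for $p$ is nonempty and entirely below $d\alpha/(2\beta(\alpha-1)-\alpha)$, using the comparison $d/(\beta-1)\le d\alpha/(2\beta(\alpha-1)-\alpha)$ (equivalent to $\beta(\alpha-2)\le0$), and then that for such $p$ the corresponding $r$-interval $\max\{p,p/(p-1),d/(\alpha-1)\}<pd/(d-p(\beta-1))$ is nonempty. The main obstacle is this last algebraic layer: confirming that the one-variable reduced inequalities coincide with \eqref{condition-global-p-1} and that all the interval comparisons in Part~1 hold, since these involve several competing rational functions of $\alpha,\beta,d$ and a careful accounting of the sign of $2\beta(\alpha-1)-\alpha$ and of strict-versus-nonstrict endpoints. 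The cleanest route is to clear denominators everywhere and reduce to polynomial inequalities in $\alpha,\beta,d,p$, which then factor (e.g.\ through $(\alpha-2)$ and $(\alpha-2\beta)$) and settle by the sign constraints $\alpha\le2$, $\beta>1$.
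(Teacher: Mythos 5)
Your proposal is correct, and most of it coincides with the paper's own proof: $p>1$ and \eqref{condition-geral} are obtained by the same direct rearrangements of \eqref{definition-p-q-appendix}, and, writing as you do $A=\frac{1}{\alpha}\bigl(\frac{d}{r}+1\bigr)$ and $B=\frac{d}{\beta}\bigl(\frac{1}{p}-\frac{1}{r}\bigr)+\frac{1}{\beta}$, the paper also in effect gets the left half of \eqref{condition-global-3(e)} from $A<1$, $B<1$, and deduces the right half of \eqref{condition-global-3(e)} from \eqref{condition-global-3(d)}, i.e.\ from $A+B/2>1$. The genuine difference is how $A+B/2>1$ is established. The paper discards all information about $r$ except $1/r>0$: it rewrites the inequality as $\frac{1}{r}(2\beta-\alpha)>\alpha\bigl(\frac{\beta-1}{d}-\frac{1}{p}\bigr)+\frac{\beta(\alpha-2)}{d}$ and invokes the hypothesis ($2\beta(\alpha-1)-\alpha<0$ or \eqref{condition-global-p-1}) to make the right-hand side negative; this is precisely your case-\eqref{definition-p-q-appendix-B} computation with $r\to\infty$. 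You instead use monotonicity of $A+B/2$ in $1/r$ and evaluate at the supremum of the admissible $r$-range, which in case \eqref{definition-p-q-appendix-A} is the finite endpoint $pd/(d-p(\beta-1))$, where the value $\frac{1}{\alpha}\bigl(\frac{d}{p}-\beta+2\bigr)+\frac{1}{2}\geq 1$ follows from $p\leq d/(\beta-1)$ and $\alpha\leq 2$ alone. This buys a sharper statement than the lemma's: in case (a), \eqref{condition-global-3} needs no extra hypothesis on $p$ at all, the dichotomy being relevant only in case (b) — a point the paper's uniform argument does not reveal. Two details to tidy in a full write-up: your reduced inequality $p\leq 2d/(\alpha+2\beta-4)$ presupposes $\alpha+2\beta-4>0$ (otherwise the needed bound $\frac{d}{p}\geq\frac{\alpha}{2}+\beta-2$ holds vacuously and should be stated as such); and in Part 1, when $\alpha=2$ one has $d/(\beta-1)=d\alpha/\bigl(2\beta(\alpha-1)-\alpha\bigr)$, so the case-(a) $p$-interval is not ``entirely below'' the threshold — you must take $p$ strictly below $d/(\beta-1)$, exactly the distinction the paper makes. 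The interval-nonemptiness checks you defer to denominator clearing are the same elementary comparisons the paper carries out, so no obstacle remains there.
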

\begin{proof}
\hyperlink{Part 1}{\textbf{Part 1.}} Note that, since by assumption $2\beta \left(\alpha-1\right)-\alpha\geq 0$, we have 
\begin{equation}
    \label{eq-lemma-condition-extra-p-r-eq-0}
    \frac{d\alpha}{2\beta \left(\alpha-1\right)-\alpha}\geq \frac{d}{\beta-1},
\end{equation}
as $\alpha\leq 2$. 
Therefore, for $\alpha<2$, there exists $p$ in the range defined by \eqref{definition-p-q-appendix-B} and \eqref{condition-global-p-1}, \ie such that $\frac{d}{\beta -1}<p<\frac{d\alpha}{2\beta \left(\alpha-1\right)-\alpha}$. Then, for $\alpha<2$, there exist numbers $p$ and $r$ in the ranges defined by \eqref{definition-p-q-appendix-B} where $p$ satisfies \eqref{condition-global-p-1}.

Note that (regardless of whether $2\beta \left(\alpha-1\right)-\alpha\geq 0$) there also exist numbers $p$ and $r$ in the ranges defined in \eqref{definition-p-q-appendix-A}.  Indeed, for $\displaystyle p \leq \frac{d}{\beta -1} $, we have 
\begin{align*}
     & \frac{d}{\beta -1}>\frac{2d}{d+\beta-1}  & \text{ as } &  & \beta&<d+1, \hspace{3cm} \\
    & \frac{d}{\beta -1}>\frac{d}{\alpha+\beta-2}  & \text{ as } &  &  \alpha&>1.
\end{align*}
Moreover, since $d-p(\beta-1)\geq 0$,
\begin{align*}
     & \frac{pd}{d-p(\beta-1)}>\frac{p}{p-1}  & \text{ as } &  &  p&> \frac{2d}{d+\beta-1}, \hspace{2.67cm}\\
     & \frac{pd}{d-p(\beta-1)}> \frac{d}{\alpha-1}   & \text{ as } &  &  p&>\frac{d}{\alpha+\beta-2}, \hspace{2.67cm} \\
     & \frac{pd}{d-p(\beta-1)}>p   & \text{ as } & & \beta&>1. 
\end{align*}

Then, for $\alpha<2$, there are  $p$ and $r$, with $p$ satisfying \eqref{condition-global-p-1}, in the ranges defined by either cases in \eqref{definition-p-q-appendix}. 
On the other hand, for $\alpha=2$, such numbers exist only in the ranges defined by \eqref{definition-p-q-appendix-A}, and we must have $p< \frac{d}{\beta -1}$ so that \eqref{condition-global-p-1} also holds. 
Therefore, it is always possible to find numbers $p$ and $r$  in the ranges defined in \eqref{definition-p-q-appendix-A} or \eqref{definition-p-q-appendix-B}  where $p$ also satisfies \eqref{condition-global-p-1}.

\hyperlink{Part 2}{\textbf{Part 2.}} 
Note that $p>1$ follows from the fact that $\frac{2d}{d+\beta-1}>1$, since $1<\beta<d+1$, and $p>\frac{2d}{d+\beta-1}$.

Now, from either restrictions in \eqref{definition-p-q-appendix}, we obtain 
\begin{equation}
    \label{condition-geral-r}
     \frac{1}{r}<\frac{\alpha-1}{d} \quad \text{ and } \quad \frac{1}{r}\leq 1-\frac{1}{p}, 
\end{equation}
and conditions \eqref{condition-geral(a)} and \eqref{condition-geral(c)} follow immediately.

Next, assuming \eqref{definition-p-q-appendix-A}, we see that $r  <\frac{pd}{d-p(\beta-1)}$ and $d-p(\beta-1)\leq 0$, and assuming \eqref{definition-p-q-appendix-B}, we have $\frac{1}{p}-\frac{\beta-1}{d}<0$ and $\frac{1}{r}\geq 0$. Then, from either cases, it follows that
\begin{equation}
    \label{condition-geral-r-2}
    \frac{1}{p}-\frac{\beta-1}{d}<\frac{1}{r},
\end{equation}
which implies \eqref{condition-geral(b)}. 

To prove \eqref{condition-global-3(d)},  note that if $2\beta \left(\alpha-1\right)-\alpha < 0$, $\frac{\beta-1}{d}+\frac{\beta(\alpha-2)}{d\alpha}<0$. Otherwise,  $p$ satisfies \eqref{condition-global-p-1}.
Then, it follows from both scenarios that $\frac{1}{p}>\frac{\beta-1}{d}+\frac{\beta(\alpha-2)}{d\alpha}$, implying $\alpha \left(\frac{\beta-1}{d}-\frac{1}{p}\right)+\frac{\beta(\alpha-2)}{d}<0$. Hence, as $\frac{1}{r}\geq 0$ and $2\beta-\alpha>0$, we have 
\begin{equation*}
    \frac{1}{r}(2\beta-\alpha)>\alpha \left(\frac{\beta-1}{d}-\frac{1}{p}\right)+\frac{\beta(\alpha-2)}{d}, 
\end{equation*}
which leads to
\begin{equation*}
    1-\frac{1}{\alpha}\left(\frac{d}{r}+1 \right)-\frac{1}{2} \left[\frac{d}{\beta}\left(\frac{1}{p}-\frac{1}{r}\right)+\frac{1}{\beta}\right]<0,
\end{equation*}
proving \eqref{condition-global-3(d)}. 

Finally, from \eqref{condition-geral(a)} and \eqref{condition-geral(b)}, we obtain 
\begin{equation*}
    \begin{split}
        \frac{1}{\alpha}\left(\frac{d}{r}+1\right)-1&<\sigma-3+\frac{2}{\alpha}\left(\frac{d}{r}+1 \right)+\frac{d}{\beta}\left(\frac{1}{p}-\frac{1}{r}\right)+\frac{1}{\beta} \\
        &<\sigma+2\left[\frac{1}{\alpha}\left(\frac{d}{r}+1 \right)-1\right]+\left[\frac{d}{\beta}\left(\frac{1}{p}-\frac{1}{r}\right)+\frac{1}{\beta}-1\right] <\sigma, 
    \end{split}
\end{equation*}
and from \eqref{condition-global-3(d)},
\begin{equation*}
    \begin{split}
        \sigma-\frac{1}{\alpha}\left(\frac{d}{r}+1 \right) &=2\left\{1-\frac{1}{\alpha}\left(\frac{d}{r}+1 \right)-\frac{1}{2}\left[\frac{d}{\beta}\left(\frac{1}{p}-\frac{1}{r}\right)+\frac{1}{\beta}\right]\right\}\\ 
        &=2\left\{\sigma-1+\frac{1}{2}\left[\frac{d}{\beta}\left(\frac{1}{p}-\frac{1}{r}\right)+\frac{1}{\beta}\right]\right\}\\
        &=2\left\{\sigma+\frac{1}{2}\left[\frac{d}{\beta}\left(\frac{1}{p}-\frac{1}{r}\right)+\frac{1}{\beta}\right]\right\}-2<0, 
    \end{split}
\end{equation*}
which proves \eqref{condition-global-3(e)}.
\end{proof}
\begin{remark} \label{Biler-fractional-9-parabolic-elliptic}
    The lower bound on $p$ in \Cref{lemma-conditions-geral-global} is the same as that in  \cite[Theorem 2.1]{Biler-fractional-9}  used to establish the local and global existence of mild solutions for the parabolic-elliptic form of \eqref{eq-work}. 
\end{remark} 
\begin{remark} \label{remark-lemma-conditions-geral-global-2}
Consider $\sigma$ defined in \eqref{definition-sigma-apendice} and $p$ and $r$ in the ranges defined in  \Cref{lemma-conditions-geral-global}. 
\begin{enumerate}[topsep=1pt,partopsep=0pt, left=-1pt,label=\textbf{\arabic*.}]
    \item 
    \begin{enumerate}[label=\textbf{(\alph*)}] 
        \item \label{remark-1(a)} We have $p<2$ iff $\frac{p}{p-1}>2$ (since this is true for any number greater than zero).
        \item From the fact that $\alpha \in (1,2]$, $\beta \in (1,d]$, $d \geq 2$, the following relations are established:
        \begin{itemize}[itemsep=5pt, topsep=3pt]
            \item $\displaystyle \frac{d}{\alpha-1} \geq 2$,  thus $\displaystyle r>2$; 
            \item $\displaystyle \frac{p}{p-1}>\frac{d}{\alpha-1}$,   iff $\displaystyle 1<p< \frac{d}{d+1-\alpha}$;
            \item $\displaystyle p<2$ if $\displaystyle p< \frac{d}{d+1-\alpha}$,  as $\displaystyle \frac{d}{d+1-\alpha} \leq 2$ for any $\displaystyle \alpha$ and $\displaystyle d$; 
            \item $\displaystyle \frac{d}{d+1-\alpha}>\frac{2d}{d+\beta-1}$ iff $\displaystyle 2\alpha+\beta>d+3$;
            \item $\displaystyle \frac{2d}{d+\beta-1}>\frac{d}{\alpha+\beta-2}$ iff $\displaystyle 2\alpha+\beta>d+3$.  
        \end{itemize}
        \item If $\displaystyle r=\frac{p}{p-1}$, we can state that $\displaystyle \frac{d}{\alpha-1}<r<\frac{2d}{d-\beta+1}< \frac{pd}{d-p(\beta-1)}$.  
    \end{enumerate}
    \item In more detail, from \cref{remark-1(a)}, \eqref{definition-p-q-appendix} can be rewritten as \label{remark-2}
    \begin{enumerate}[label=\textbf{(\alph*)}] 
        \item for $2\alpha+\beta\leq d+3$,
    \end{enumerate}
    \begingroup
        \addtolength{\jot}{0.6em}
    \begin{align}
        \max  \left\{\frac{d}{\alpha-1}, \; p\right\} < r& < \frac{pd}{d-p(\beta-1)} & \; \text{ if } \; & \; \frac{d}{\alpha+\beta-2}  < p  \leq \frac{d}{\beta-1}, \label{condition-geral-p-and-r-(a)-1}\\
        r &> \max  \left\{\frac{d}{\alpha-1},\; p\right\}  & \; \text{ if } \;&   \textcolor{white}{\frac{d}{\alpha+\beta-2}  <} p  > \frac{d}{\beta-1}, \label{condition-geral-p-and-r-(a)-2}
    \end{align}
    \endgroup
    \begin{enumerate}[label=\textbf{(\alph*)}, resume] 
        \item otherwise, for \; $ 2\alpha+\beta>d+3$, 
    \end{enumerate}
    \begingroup
        \addtolength{\jot}{0.6em}
    \begin{align}
        \frac{p}{p-1} \leq r &< \frac{pd}{d-p(\beta-1)} & \;  \text{ if }  \; &  \frac{2d}{d+\beta-1}<p< \frac{d}{d+1-\alpha}, \label{condition-geral-p-and-r-(b)-1}\\
         \max \left\{\frac{d}{\alpha-1},\; p\right\} < r& < \frac{pd}{d-p(\beta-1)} & \text{ if } \; &  \frac{d}{d+1-\alpha}  \leq p  \leq \frac{d}{\beta-1}, \; \; \label{condition-geral-p-and-r-(b)-2}\\
        r &>  \max  \left\{\frac{d}{\alpha-1},\; p\right\}  & \; \text{ if } \; &  \textcolor{white}{\frac{d}{\alpha+\beta-2}  <} p  > \frac{d}{\beta-1}, \label{condition-geral-p-and-r-(b)-3}
    \end{align}
    \endgroup
    \item [] with, in both cases, the equality $r=p$ being possible if $\displaystyle p>d/(\alpha-1)$.
    \item Adding \eqref{condition-global-p-1}, for $2\beta \left(\alpha-1\right)-\alpha\geq 0$, \cref{remark-2} incorporates the following changes: 
    \item [] $\bullet$ \textbf{Case A.} $\alpha<2$ 
    \item [] \hspace{0.20cm} addition of condition  \eqref{condition-global-p-1} to lines \eqref{condition-geral-p-and-r-(a)-2} and \eqref{condition-geral-p-and-r-(b)-3}, \ie
    \begin{equation*}
        r > \max  \left\{\frac{d}{\alpha-1},\; p\right\}     \quad \text{ if } \qquad  \frac{d}{\beta-1}  <  p  <\frac{d\alpha}{2\beta \left(\alpha-1\right)-\alpha}. 
    \end{equation*}
    \item [] $\bullet$ \textbf{Case B.}  $\alpha=2$
    \item [] \hspace{0.20cm} exclusion of \eqref{condition-geral-p-and-r-(a)-2} and \eqref{condition-geral-p-and-r-(b)-3}; replacement of $p\leq \frac{d}{\beta -1} $ with $p< \frac{d}{\beta -1} $ in \eqref{condition-geral-p-and-r-(a)-1}  and \eqref{condition-geral-p-and-r-(b)-2}.
    \item From \eqref{definition-sigma-apendice}, \eqref{condition-geral(a)} and \eqref{condition-geral(c)}, we see that the parameter $\sigma$ is such that $\sigma>0$, which is a better lower bound than the one given by \eqref{condition-global-3(e)}. 
    Moreover, \eqref{condition-global-3(e)} ensures that $\sigma<1$. 
    Then, seeking a better upper bound for $\sigma$ that has the advantage over the one in \eqref{condition-global-3(e)} of depending only on $\alpha$ and $\beta$, we find, when $\alpha, \beta \in (1,2)$, that
    \begin{equation*}
        \sigma<\frac{\max{\left\{\alpha,\beta \right\}}-2}{\min{\left\{\alpha,\beta \right\}}}+1.
    \end{equation*}    
    Indeed, we can rewrite $\sigma$ defined in \eqref{definition-sigma-apendice} as 
    \[\sigma=2-\frac{1}{\alpha\beta}\left[d\left(\frac{1}{r}\left(\beta-\alpha\right)+\frac{\alpha}{p}\right)+\beta+\alpha\right].\] 
    Then, due to relation \eqref{condition-geral-r}, if $\alpha \geq \beta$, we obtain 
    \begin{equation*}
        \sigma<2-\frac{1}{\alpha\beta}\left[d\left(\frac{\alpha-1}{d}\left(\beta-\alpha\right)+\frac{\alpha}{p}\right)+\beta+\alpha\right]
        =\frac{\alpha-1}{\beta}+\frac{\beta-1}{\beta}-\frac{d}{\beta p}<\frac{\alpha-2}{\beta}+1; 
    \end{equation*}
    and, due to relation \eqref{condition-geral-r-2}, if $\alpha<\beta$, we obtain
    \begin{eqnarray*}
        \sigma &<&2-\frac{1}{\alpha\beta}\left\{d\left[\left(\frac{1}{p}-\frac{\beta-1}{d}\right)\left(\beta-\alpha\right)+\frac{\alpha}{p}\right]+\beta+\alpha\right\} \\
        &=&\frac{\alpha-1}{\alpha}+\frac{\beta-1}{\alpha} -\frac{d}{\alpha p} 
        <\frac{\beta-2}{\alpha}+1.
    \end{eqnarray*}
    Therefore, the estimate follows.
\end{enumerate}
\end{remark}
\begin{lemma}\label{lemma-condition-initial-local}
Consider $p$ and $r$ in the ranges defined in \eqref{definition-p-q-appendix} and let $\wp $ be given by 
\begin{equation}
    \label{eq-lemma-condition-initial-local}
    \begin{aligned}
        & \wp \in \left[\frac{d}{\alpha-1}, r \right] & \quad \text{ if } \quad \alpha\leq \beta, \\ 
        & \wp =r & \quad \text{ if } \quad \alpha>\beta.
    \end{aligned}  
\end{equation}
Then, $2 \leq \wp  \leq r$ and  
\begin{equation}
    \label{condicao-L-unica}
    \frac{1}{\alpha}\left(\frac{d}{r}+1\right)+\frac{d}{\beta}\left(\frac{1}{\wp }-\frac{1}{r}\right) \leq 1. 
\end{equation}
\end{lemma}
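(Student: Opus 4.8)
The plan is to treat the two assertions separately, handling the bound $2 \le \wp \le r$ first and then the sum inequality \eqref{condicao-L-unica}, exploiting in each case a monotonicity that reduces the verification to the extreme admissible value of $\wp$.

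For $2 \le \wp \le r$, I would first note that $\alpha \in (1,2]$ forces $\alpha - 1 \le 1$, hence $d/(\alpha-1) \ge d \ge 2$. When $\alpha > \beta$ we have $\wp = r$, and $r > d/(\alpha-1) \ge 2$ follows directly from \eqref{definition-p-q-appendix}: in both \eqref{definition-p-q-appendix-A} and \eqref{definition-p-q-appendix-B} one has $r > d/(\alpha-1)$, even in the borderline situation $r = \max\{p,\, p/(p-1)\}$, which is admissible only when this maximum already exceeds $d/(\alpha-1)$. When $\alpha \le \beta$, the definition \eqref{eq-lemma-condition-initial-local} places $\wp$ in $[d/(\alpha-1), r]$, a nonempty interval precisely because $r > d/(\alpha-1)$, so $\wp \ge d/(\alpha-1) \ge 2$ and $\wp \le r$ are immediate.

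For inequality \eqref{condicao-L-unica}, denote its left-hand side by $L(\wp)$ and observe that, viewed as a function of $1/\wp$, it is affine with positive slope $d/\beta$; hence $L$ is increasing in $1/\wp$, and it suffices to verify the inequality at the smallest admissible $\wp$. In the case $\alpha > \beta$ the only choice is $\wp = r$, for which the term $\frac{d}{\beta}\left(\frac{1}{\wp} - \frac{1}{r}\right)$ vanishes and \eqref{condicao-L-unica} collapses to $\frac{1}{\alpha}\left(\frac{d}{r}+1\right) \le 1$, which is exactly \eqref{condition-geral(a)} from \Cref{lemma-conditions-geral-global}. In the case $\alpha \le \beta$ the smallest admissible $\wp$ is $d/(\alpha-1)$, i.e. $1/\wp = (\alpha-1)/d$, so it is enough to bound
\[
L_0 = \frac{d}{r}\left(\frac{1}{\alpha} - \frac{1}{\beta}\right) + \frac{1}{\alpha} + \frac{\alpha-1}{\beta},
\]
obtained from $L\bigl(d/(\alpha-1)\bigr)$ after collecting the $d/r$ terms. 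Since $\alpha \le \beta$ gives $\frac{1}{\alpha} - \frac{1}{\beta} \ge 0$ and $r > d/(\alpha-1)$ gives $d/r < \alpha - 1$, the coefficient of the first term is nonnegative and I may replace $d/r$ by $\alpha - 1$, whence $L_0 \le (\alpha-1)\left(\frac{1}{\alpha} - \frac{1}{\beta}\right) + \frac{1}{\alpha} + \frac{\alpha-1}{\beta} = \frac{\alpha-1}{\alpha} + \frac{1}{\alpha} = 1$, the $1/\beta$ contributions cancelling exactly.

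The computation is elementary; the only genuinely delicate point, and the thing to get right, is the reduction step: recognizing that $L$ is monotone in $1/\wp$ so that the worst case is the single endpoint $\wp = d/(\alpha-1)$, together with correctly tracking which inequalities are strict. In particular, equality $L = 1$ arises exactly when $\alpha = \beta$ (so the coefficient of $d/r$ vanishes and the bound is attained for every $r$) and $\wp = d/(\alpha-1)$, which is consistent with the borderline identity \eqref{codition-r-0-Theorem2} of \hyperlink{thm:local(b)-1}{\textbf{(b.1)}} recorded in \Cref{Remark-new-theorem}, whereas all other admissible configurations yield the strict inequality anticipated by \eqref{codition-r-0-Theorem2-2}.
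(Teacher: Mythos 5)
Your proof is correct and takes essentially the same approach as the paper: both arguments reduce to the worst case $\wp = d/(\alpha-1)$ (the paper by bounding $1/\wp \le (\alpha-1)/d$, you by observing monotonicity of the left-hand side in $1/\wp$) and then close the estimate using $r > d/(\alpha-1)$, which is exactly \eqref{condition-geral(a)}, identifying the same equality case $\alpha = \beta$, $\wp = d/(\alpha-1)$. The only difference is the cosmetic grouping of terms — the paper factors the bound as $\frac{1}{\alpha}\left(\frac{d}{r}+1\right)\left(1-\frac{\alpha}{\beta}\right)+\frac{\alpha}{\beta}$, whereas you collect the $d/r$ terms — which amounts to the same computation.
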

\begin{proof}
It is easy to see that $2 \leq \wp  \leq r$ from the definition of $\wp$ and the fact that $r>\frac{d}{\alpha-1}\geq 2$.  
Next, if $\alpha>\beta$, condition \eqref{condicao-L-unica} falls into \eqref{condition-geral(a)}. Otherwise, if $\alpha\leq \beta$, then $\frac{1}{\wp }\leq \frac{\alpha-1}{d}$, and with this
\begin{equation*}
    \begin{split}
        \frac{1}{\alpha}\left(\frac{d}{r}+1\right)+\frac{d}{\beta}\left(\frac{1}{\wp}-\frac{1}{r}\right) &\leq \frac{1}{\alpha}\left(\frac{d}{r}+1\right)+\frac{d}{\beta}\left(\frac{\alpha-1}{d}-\frac{1}{r}\right) \\&
        =\frac{1}{\alpha}\left(\frac{d}{r}+1\right)\left(1-\frac{\alpha}{\beta}\right)+\frac{\alpha}{\beta}. 
    \end{split}
\end{equation*}
Therefore, if $\alpha< \beta$, from condition \eqref{condition-geral(a)}, we have
\begin{equation*}
    \frac{1}{\alpha}\left(\frac{d}{r}+1\right)\left(1-\frac{\alpha}{\beta}\right)+\frac{\alpha}{\beta}<1-\frac{\alpha}{\beta}+\frac{\alpha}{\beta}=1,
\end{equation*}
and if $\alpha= \beta$,
\begin{equation*}
    \frac{1}{\alpha}\left(\frac{d}{r}+1\right)\left(1-\frac{\alpha}{\beta}\right)+\frac{\alpha}{\beta}=\frac{\alpha}{\beta}=1.
\end{equation*}
Note that the equality in \eqref{condicao-L-unica} holds only for $\alpha= \beta$ and in that case $\wp = \frac{d}{\alpha-1}$. 
\end{proof}

The following lemmas are pertinent to \cref{subsec:Global-existence}. They address the proof of some inequalities that must be satisfied by $p_1$ and $p_2$, the parameters defining the space of the initial data: $\rho_0 \in L^{p_1}(\mathbb{R}^d)$ and $\nabla c_0 \in L^{p_2}(\mathbb{R}^d)$. 

\begin{lemma} \label{Lemma-difinition-p1}
    Consider $p$ and $r$ as in \eqref{definition-p-q-appendix}, with $p$ satisfying \eqref{condition-global-p-1} if $\; 2\beta \left(\alpha-1\right)-\alpha\geq 0$, and
    $p_1=\frac{pd}{\alpha \sigma p+d}$. 
    Then, $1 \leq p_1 < p$. 
\end{lemma}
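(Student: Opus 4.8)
The plan is to work with the reciprocal $1/p_1$, which by the definition $p_1=\frac{pd}{\alpha\sigma p+d}$ in \eqref{definition-p1-p2} satisfies the clean identity $\frac{1}{p_1}=\frac{\alpha\sigma}{d}+\frac{1}{p}$. Both desired inequalities then reduce to statements about $\sigma$. Since $\sigma>0$ (equivalently $\frac{\alpha\sigma}{d}>0$) is the same as $\frac{1}{p_1}>\frac1p$, the bound $p_1<p$ follows at once from positivity of $\sigma$; and I would establish $\sigma>0$ directly from the grouping $\sigma=2-\frac1\alpha\!\left(\frac dr+1\right)-\left[\frac d\beta\!\left(\frac1p-\frac1r\right)+\frac1\beta\right]$, because each of the two bracketed quantities is strictly below $1$ by conditions \eqref{condition-geral(a)} and \eqref{condition-geral(b)} of \Cref{lemma-conditions-geral-global}.

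For the lower bound, $p_1\ge 1$ is equivalent to $\frac{\alpha\sigma}{d}+\frac1p\le 1$, i.e. to $\alpha\sigma\le d\!\left(1-\frac1p\right)$. The next step is to substitute $\alpha\sigma=2\alpha-\left(\frac dr+1\right)-\frac{\alpha d}{\beta}\!\left(\frac1p-\frac1r\right)-\frac\alpha\beta$ and collect the coefficients of $\frac1p$ and $\frac1r$. The standalone contribution $-\frac dr+\frac dp=d\!\left(\frac1p-\frac1r\right)$ merges with the term $-\frac{\alpha d}{\beta}\!\left(\frac1p-\frac1r\right)$ into $\frac{d(\beta-\alpha)}{\beta}\!\left(\frac1p-\frac1r\right)$, so the whole inequality $\alpha\sigma\le d(1-\frac1p)$ collapses to the single clean bound $\frac{d(\beta-\alpha)}{\beta}\!\left(\frac1p-\frac1r\right)\le d+1+\frac\alpha\beta-2\alpha$.

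I would close this by the sign of $\beta-\alpha$. If $\beta\le\alpha$, the left-hand side is $\le 0$ (since $r\ge p$ gives $\frac1p\ge\frac1r$), while the right-hand side is $\ge d+2-2\alpha\ge 0$, using $\frac\alpha\beta\ge 1$, $\alpha\le 2$ and $d\ge 2$; hence the inequality holds. If $\beta>\alpha$, I would bound the (now nonnegative) left-hand side using $\frac1p-\frac1r<\frac{\beta-1}{d}$, which is precisely \eqref{condition-geral-r-2}; after this substitution the inequality simplifies (dividing by $\beta>0$) to $\alpha+\beta\le d+2$, which is automatic from $\alpha\le 2$ and $\beta\le d$. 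In both cases $\frac{1}{p_1}\le 1$, so $p_1\ge 1$.

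The hard part is the bookkeeping in the second paragraph: one must carefully merge the $\frac1p$ and $\frac1r$ terms so that the apparently messy linear inequality in $(\frac1p,\frac1r)$ collapses to the single bound above. Once that reduction is carried out, the remaining two-case argument is short and relies only on the structural constraints $\alpha\le 2$, $\beta\le d$, $d\ge 2$ together with the range bound \eqref{condition-geral-r-2}; in particular, the extra hypothesis \eqref{condition-global-p-1} plays no role in this lemma.
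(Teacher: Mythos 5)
Your proof is correct and follows essentially the same route as the paper's: both reduce $p_1\geq 1$ to a linear inequality in $\tfrac1p,\tfrac1r$ (your form and the paper's form $\bigl[\tfrac{d}{\beta}(\tfrac1p-\tfrac1r)+\tfrac1\beta\bigr](\beta-\alpha)\leq d+2(1-\alpha)$ are the same inequality with the $\tfrac{\beta-\alpha}{\beta}$ term moved across), split on the sign of $\beta-\alpha$, use \eqref{condition-geral(b)} (equivalently \eqref{condition-geral-r-2}) in the case $\beta>\alpha$ to land on the structural bound $\alpha+\beta\leq d+2$, and get $p_1<p$ from $\sigma>0$. Your observation that \eqref{condition-global-p-1} is not actually needed here matches the paper, whose proof likewise uses only \eqref{condition-geral} and the ranges $\alpha\leq 2$, $\beta\leq d$, $d\geq 2$, $r\geq p$.
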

\begin{proof}
Note that $p_1 \geq 1 $ if and only if $\left[\frac{d}{\beta}\left(\frac{1}{p}-\frac{1}{r}\right)+\frac{1}{\beta}\right](\beta-\alpha) \leq d+2(1-\alpha)$. Moreover, as $\alpha\leq 2$ and $d\geq 2$, we see that $d+2(1-\alpha)\geq 0$. Therefore, if $\beta \leq \alpha$, the inequality is automatically verified. 
Otherwise, from \eqref{condition-geral(b)} we obtain $\left[\frac{d}{\beta}\left(\frac{1}{p}-\frac{1}{r}\right)+\frac{1}{\beta}\right](\beta-\alpha)<\beta-\alpha$, 
and, as $\beta \leq d$ and $\alpha \leq 2$, it follows that $\beta-\alpha\leq d-\alpha+2-\alpha =d+2(1-\alpha)$; thus, $p_1\geq 1$.  
Furthermore, $p_1 < p$, as $\alpha \sigma p>0$ implies $\frac{d}{\alpha \sigma p+d}<1$. 
\end{proof}

\begin{lemma} \label{Lemma-difinition-p2}
    Consider $p$ and $r$ as in \eqref{definition-p-q-appendix}, with $p$ satisfying \eqref{condition-global-p-1} if $\; 2\beta \left(\alpha-1\right)-\alpha\geq 0$, 
    and 
    \begin{equation}
        \label{definition-p2-apendice}
        p_{2}=\frac{dr\alpha}{\beta \left(r(\alpha-1)-d\right)+d\alpha}.
    \end{equation}
Then, $1 < p_2 < r $ and the following condition is satisfied
    \begin{equation}
        \label{condicao-L-global-apendice}
        \sigma+\frac{d}{\beta}\left(\frac{1}{p_2}-\frac{1}{r}\right)<1. 
    \end{equation}
\end{lemma}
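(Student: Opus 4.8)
The plan is to reduce all three claims to a single algebraic identity for $1/p_2$ and then read them off using the inequalities already collected in \Cref{lemma-conditions-geral-global}. First I would invert the definition \eqref{definition-p2-apendice} and expand the denominator, which gives
\begin{equation}
    \frac{1}{p_2}=\frac{1}{r}+\frac{\beta}{\alpha}\left(\frac{\alpha-1}{d}-\frac{1}{r}\right),
\end{equation}
or equivalently, after multiplying the bracketed term by $d/\beta$ and regrouping,
\begin{equation}
    \frac{1}{\alpha}\left(\frac{d}{r}+1\right)+\frac{d}{\beta}\left(\frac{1}{p_2}-\frac{1}{r}\right)=1. \tag{$\star$}
\end{equation}
This identity is the engine of the whole argument; everything else is bookkeeping built on it together with the bound $\frac{1}{r}<\frac{\alpha-1}{d}$ from \eqref{condition-geral-r} and the range hypotheses on $\alpha,\beta,d$.

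For the inequality $p_2<r$ I would use the first form of $(\star)$: since $\frac{1}{r}<\frac{\alpha-1}{d}$, the difference $\frac{1}{p_2}-\frac{1}{r}=\frac{\beta}{\alpha}\left(\frac{\alpha-1}{d}-\frac{1}{r}\right)$ is strictly positive, so $\frac{1}{p_2}>\frac{1}{r}>0$. In particular the denominator in \eqref{definition-p2-apendice} is positive, $p_2$ is a well-defined positive number, and $p_2<r$.

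For $p_2>1$, i.e.\ $\frac{1}{p_2}<1$, the plan is to split on the sign of $1-\beta/\alpha$, the coefficient of $1/r$ in the first form of $(\star)$. If $\beta\le\alpha$ this coefficient is nonnegative, so replacing $1/r$ by its upper bound $\frac{\alpha-1}{d}$ increases the right-hand side and yields $\frac{1}{p_2}\le\frac{\alpha-1}{d}\le\frac12$, using $d/(\alpha-1)\ge 2$. If $\beta>\alpha$ the coefficient is negative, so dropping the $1/r$ term only increases the right-hand side, giving $\frac{1}{p_2}<\frac{\beta(\alpha-1)}{d\alpha}$; then $\beta(\alpha-1)\le d(\alpha-1)<d\alpha$ (since $\beta\le d$ and $\alpha>1$) forces $\frac{1}{p_2}<1$. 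In either case $p_2>1$, so $1<p_2<r$.

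Finally, \eqref{condicao-L-global-apendice} is immediate from $(\star)$: it rewrites $\frac{d}{\beta}\left(\frac{1}{p_2}-\frac{1}{r}\right)=1-\frac{1}{\alpha}\left(\frac{d}{r}+1\right)$, so the claim $\sigma+\frac{d}{\beta}\left(\frac{1}{p_2}-\frac{1}{r}\right)<1$ is equivalent to $\sigma<\frac{1}{\alpha}\left(\frac{d}{r}+1\right)$, which is exactly the right-hand inequality of \eqref{condition-global-3(e)}. The hypotheses of the present lemma are precisely those under which \Cref{lemma-conditions-geral-global} guarantees \eqref{condition-global-3(e)} (namely $2\beta(\alpha-1)-\alpha<0$, or else $p$ satisfies \eqref{condition-global-p-1}), so this step needs no extra work. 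The only point demanding genuine care is the casework for $p_2>1$, where the sign of $1-\beta/\alpha$ determines the direction in which the bound $\frac{1}{r}<\frac{\alpha-1}{d}$ must be used; the remaining steps are direct consequences of $(\star)$.
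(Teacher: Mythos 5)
Your proposal is correct, and its skeleton is the same as the paper's: you invert \eqref{definition-p2-apendice} into the identity $\frac{1}{\alpha}\bigl(\frac{d}{r}+1\bigr)+\frac{d}{\beta}\bigl(\frac{1}{p_2}-\frac{1}{r}\bigr)=1$ (the paper states the equivalent form $\frac{1}{p_2}=\frac{\beta}{d}\bigl(1-\frac{1}{\alpha}\bigl(\frac{d}{r}+1\bigr)\bigr)+\frac{1}{r}$), deduce $p_2<r$ from $\frac{1}{r}<\frac{\alpha-1}{d}$ (i.e.\ condition \eqref{condition-geral(a)}), and reduce \eqref{condicao-L-global-apendice} to $\sigma<\frac{1}{\alpha}\bigl(\frac{d}{r}+1\bigr)$, which is the right-hand inequality of \eqref{condition-global-3(e)}. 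Where you genuinely diverge is the step $p_2>1$. The paper substitutes the definition of $\sigma$ into the identity and invokes \eqref{condition-global-3(d)} together with \eqref{condition-geral(c)} and $d\geq 2$ to obtain $\frac{1}{p_2}\leq\frac{1}{2}\bigl(\frac{1}{p}+\frac{1}{r}+\frac{1}{d}\bigr)\leq\frac{3}{4}$; note that \eqref{condition-global-3(d)} is exactly the condition whose validity requires the supplementary hypothesis \eqref{condition-global-p-1} when $2\beta(\alpha-1)-\alpha\geq 0$. You instead write $\frac{1}{p_2}=\frac{\beta(\alpha-1)}{d\alpha}+\frac{1}{r}\bigl(1-\frac{\beta}{\alpha}\bigr)$ and split on the sign of $1-\beta/\alpha$, using only $\frac{1}{r}<\frac{\alpha-1}{d}$ and the ranges $\alpha\in(1,2]$, $\beta\in(1,d]$, $d\geq 2$; in fact your bounds give the sharper conclusion $\frac{1}{p_2}\leq\max\bigl\{\frac{\alpha-1}{d},\frac{\alpha-1}{\alpha}\bigr\}\leq\frac{1}{2}$, i.e.\ $p_2\geq 2$. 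This makes the intermediate claim $1<p_2<r$ independent of the $\sigma$-conditions and hence of the extra hypothesis on $p$, which in your argument is needed only at the very end, to import \eqref{condition-global-3(e)} from \Cref{lemma-conditions-geral-global}; the paper's route buys nothing extra here beyond reusing already-proved machinery, at the cost of invoking stronger assumptions for this intermediate step.
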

\begin{proof}
Note first that $\beta \left(r(\alpha-1)-d\right)+d\alpha>0$ as $r>\frac{d}{\alpha-1}$. 
Then, \eqref{definition-p2-apendice} can be rewritten as 
\begin{equation}
    \label{definition-p2-apendice-2}
    \frac{1}{p_{2}}=\frac{\beta}{d}\left(1-\frac{1}{\alpha}\left(\frac{d}{r}+1 \right)\right)+\frac{1}{r},
\end{equation}
and it follows from condition \eqref{condition-geral(a)} that $\frac{1}{p_{2}}> \frac{1}{r}$. Thus, $r>p_2$. 
Moreover, $p_2>1$. Indeed, from \eqref{definition-p2-apendice-2}, conditions \eqref{condition-global-3(d)}  and \eqref{condition-geral(c)} and the fact that $d \geq 2$, we obtain
\begin{equation*}
    \begin{split}
        \frac{1}{p_{2}} 
        &=\frac{\beta}{d}\left[\sigma+\frac{1}{2}\left(\frac{d}{\beta}\left(\frac{1}{p}-\frac{1}{r}\right)+\frac{1}{\beta}\right)-1+\frac{1}{2}\left(\frac{d}{\beta}\left(\frac{1}{p}-\frac{1}{r}\right)+\frac{1}{\beta}\right)\right]+\frac{1}{r}\\ 
        &\leq\frac{1}{2}\left(\frac{1}{p}+\frac{1}{r}+\frac{1}{d}\right) \\
        &\leq \frac{3}{4}<1.
    \end{split} 
\end{equation*}

Next, note that \eqref{definition-p2-apendice-2} leads to $\frac{d}{\beta}\left(\frac{1}{p_{2}}-\frac{1}{r}\right)=1-\frac{1}{\alpha}\left(\frac{d}{r}+1 \right)$. Therefore,  condition \eqref{condicao-L-global-apendice} is equivalent to  $\sigma<\frac{1}{\alpha}\left(\frac{d}{r}+1 \right)$, which is true from estimate \eqref{condition-global-3(e)}.
\end{proof}

In the following lemma, new ranges for the parameters $p$ and $r$ are defined to satisfy the supplementary condition: $r \sigma\leq d/\alpha$. 
This condition is essential for proving that the global mild solution $\rho$ belongs to the space $L^{p_1}(\mathbb{R}^{d})$, where the parameter $p_1$ (defined in \Cref{Lemma-difinition-p1}) specifies the space of the initial data $\rho_0 \in L^{p_1}(\mathbb{R}^d)$. 

\begin{lemma} \label{lemma-condition-extra-p-r}
Consider $p$ and $r$ satisfying 
\begin{subequations}
\label{definition-p-q-appendix-2}
\begin{multline}
    \label{condition-analise-p-r-2-def}
    \hspace{1cm}  \max  \left\{ \frac{2d}{d+\beta-1}, \; \frac{d}{\alpha+\beta-2}\right\}<p \leq \frac{2d}{(\alpha-1)+2(\beta-1)} \quad \text{ and } \\
     \quad   \max  \left\{p, \; \frac{p}{p-1}, \; \frac{d}{\alpha-1}\right\}< r  <\frac{pd}{d-p(\beta-1)},  \quad \text{ or }\hspace{0.5cm} 
\end{multline}
\begin{multline}
    \label{condition-analise-p-r-3-def}
    \hspace{1cm}  \frac{2d}{(\alpha-1)+2(\beta-1)} < p < \frac{\alpha d }{\max\left\{2\beta(\alpha-1)-\alpha,  \; \alpha(\alpha-2)+\beta \right\}} \quad \text{ and } \\
     \quad   \max  \left\{p, \; \frac{p}{p-1}, \; \frac{d}{\alpha-1}\right\}<r \leq \frac{(2\beta-\alpha)pd}{[\beta(\alpha-1)+\alpha(\beta-1)]p-\alpha d}, \hspace{0.5cm} 
\end{multline}
\end{subequations} 

\noindent where, in both cases, the equality $r=\max  \left\{p, \; \frac{p}{p-1}\right\}$ is possible if $\max  \left\{p, \; \frac{p}{p-1}\right\}>\frac{d}{\alpha-1}$. 

\hypertarget{Part 1-2}{\textbf{Part 1:}} There exist $p$ and $r$ such that \eqref{definition-p-q-appendix-2} holds. 

\hypertarget{Part 2-2}{\textbf{Part 2:}} Consider $p$ and $r$ satisfying constraint \eqref{definition-p-q-appendix-2}. Then $p>1$, and the following conditions are satisfied: 

\begin{subequations}
    \label{condition-geral-2}
    \noindent\begin{minipage}{.47\linewidth}
    \begin{equation}
        \label{condition-geral-2(a)}
        \frac{1}{\alpha}\left(\frac{d}{r}+1 \right)<1,
    \end{equation}
    \end{minipage}
    \begin{minipage}{.47\linewidth}
    \begin{equation}
        \label{condition-geral-2(b)}
        \frac{d}{\beta}\left(\frac{1}{p}-\frac{1}{r}\right)+\frac{1}{\beta}<1,
    \end{equation}
    \end{minipage}

    \noindent\begin{minipage}{.47\linewidth}
    \begin{equation}
        \label{condition-geral-2(c)}
        \frac{1}{p}+\frac{1}{r}\leq 1, 
    \end{equation}
    \end{minipage}
    \begin{minipage}{.47\linewidth}
    \begin{equation}
        \label{condition-geral-2(d)}
        \sigma+\frac{1}{2} \left[\frac{d}{\beta}\left(\frac{1}{p}-\frac{1}{r}\right)+\frac{1}{\beta}\right]<1,
    \end{equation}
    \end{minipage}

    \noindent\begin{minipage}{.47\linewidth}
    \begin{equation}
        \label{condition-geral-2(e)}
        -1<\sigma-\frac{1}{\alpha}\left(\frac{d}{r}+1\right)<0, 
    \end{equation}
    \end{minipage}
    \begin{minipage}{.47\linewidth}
    \begin{equation}
        \label{condition-geral-2(f)}
        r \sigma\leq d/\alpha.
    \end{equation}
    \end{minipage}
\end{subequations}
\end{lemma}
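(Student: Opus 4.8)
The plan is to exploit that the ranges \eqref{definition-p-q-appendix-2} refine those of \Cref{lemma-conditions-geral-global}, so that conditions \eqref{condition-geral-2(a)}--\eqref{condition-geral-2(e)} come for free and only the genuinely new constraint \eqref{condition-geral-2(f)} needs fresh work. First I would show that any $(p,r)$ satisfying \eqref{definition-p-q-appendix-2} also satisfies \eqref{definition-p-q-appendix} together with \eqref{condition-global-p-1}. In the regime \eqref{condition-analise-p-r-2-def} this reduces to the elementary inequalities $\frac{2d}{\alpha+2\beta-3}\le\frac{d}{\beta-1}$ (equivalent to $\alpha\ge1$) and $\frac{2d}{\alpha+2\beta-3}<\frac{d\alpha}{2\beta(\alpha-1)-\alpha}$ (equivalent to $2\beta(\alpha-2)-\alpha(\alpha-1)<0$, true since $\alpha\le2$), so $p$ lands in \eqref{definition-p-q-appendix-A} and additionally obeys \eqref{condition-global-p-1}, while the $r$-range is literally the one there. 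In the regime \eqref{condition-analise-p-r-3-def} the bound on $p$ contains $p<\frac{d\alpha}{2\beta(\alpha-1)-\alpha}$ by definition of the maximum, and I would verify the key identity that cross-multiplying $\frac{(2\beta-\alpha)pd}{[\beta(\alpha-1)+\alpha(\beta-1)]p-\alpha d}\le\frac{pd}{d-p(\beta-1)}$ reduces exactly to $(\alpha+2\beta-3)p\le 2d$; hence for $p>\frac{2d}{\alpha+2\beta-3}$ the imposed upper bound on $r$ is no larger than $\frac{pd}{d-p(\beta-1)}$ and we again sit inside \eqref{definition-p-q-appendix}. With this inclusion, \Cref{lemma-conditions-geral-global} immediately delivers $p>1$ and \eqref{condition-geral-2(a)}--\eqref{condition-geral-2(e)}, the last being a restatement of \eqref{condition-global-3(e)}.

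It then remains to prove the new condition \eqref{condition-geral-2(f)}, namely $r\sigma\le d/\alpha$. Substituting \eqref{definition-sigma-apendice} and clearing denominators, I would show (using $2\beta-\alpha>0$) that this is equivalent to
\[
\frac{2\beta-\alpha}{r}\ \ge\ \frac{[\beta(\alpha-1)+\alpha(\beta-1)]p-\alpha d}{dp}.
\]
If $[\beta(\alpha-1)+\alpha(\beta-1)]p-\alpha d\le 0$ the right-hand side is nonpositive and the inequality holds automatically; otherwise it is equivalent to the explicit bound $r\le\frac{(2\beta-\alpha)pd}{[\beta(\alpha-1)+\alpha(\beta-1)]p-\alpha d}$. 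In the regime \eqref{condition-analise-p-r-3-def} this is precisely the imposed upper bound on $r$, so there is nothing to prove. In the regime \eqref{condition-analise-p-r-2-def}, where $p\le\frac{2d}{\alpha+2\beta-3}$, the same threshold identity $(\alpha+2\beta-3)p\le 2d$ shows that the standing bound $r<\frac{pd}{d-p(\beta-1)}$ already forces the required inequality; thus \eqref{condition-geral-2(f)} holds throughout \eqref{definition-p-q-appendix-2}.

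\textbf{Part 1.} For nonemptiness I would follow the template of Part 1 of \Cref{lemma-conditions-geral-global}. The decisive inequality is $\frac{2d}{\alpha+2\beta-3}<\frac{\alpha d}{\max\{2\beta(\alpha-1)-\alpha,\ \alpha(\alpha-2)+\beta\}}$, which I verify termwise: against $2\beta(\alpha-1)-\alpha$ it is the inequality already used above, and against $\alpha(\alpha-2)+\beta$ it reduces to $(\alpha-1)(\alpha-2\beta)<0$, true since $\alpha>1$ and $\alpha<2\beta$. This guarantees a nonempty $p$-interval in \eqref{condition-analise-p-r-3-def}, and for such $p$ the lower bound $\max\{p,\ p/(p-1),\ d/(\alpha-1)\}$ is compatible with the upper bound on $r$; for instance compatibility of $r>d/(\alpha-1)$ with that upper bound follows from $p<\frac{\alpha d}{\alpha(\alpha-2)+\beta}$, obtained by the reduction $[\alpha(\alpha-2)+\beta]p<\alpha d$. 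The analogous checks against $p$ and $p/(p-1)$, together with the inherited bound $p<\frac{d\alpha}{2\beta(\alpha-1)-\alpha}$, complete the existence statement.

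The main obstacle I anticipate is bookkeeping rather than conceptual: every reduction (the threshold identity $(\alpha+2\beta-3)p\le 2d$, the sign of $2\beta(\alpha-2)-\alpha(\alpha-1)$, the factorization $(\alpha-1)(\alpha-2\beta)$) must be carried out while tracking which denominators are positive, so that cross-multiplication preserves the inequality direction. In particular, the proof of \eqref{condition-geral-2(f)} requires the sub-split according to the sign of $[\beta(\alpha-1)+\alpha(\beta-1)]p-\alpha d$, and one must confirm that the two regimes \eqref{condition-analise-p-r-2-def}--\eqref{condition-analise-p-r-3-def} glue correctly at $p=\frac{2d}{\alpha+2\beta-3}$; getting these sign conditions exactly right is where the care lies.
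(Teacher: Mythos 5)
Your \textbf{Part 2} is correct and is essentially the paper's own argument: you embed \eqref{definition-p-q-appendix-2} into \eqref{definition-p-q-appendix} together with \eqref{condition-global-p-1}, invoke \Cref{lemma-conditions-geral-global} to get $p>1$ and \eqref{condition-geral-2(a)}--\eqref{condition-geral-2(e)}, and then reduce \eqref{condition-geral-2(f)} to the explicit bound $r\le\frac{(2\beta-\alpha)pd}{[\beta(\alpha-1)+\alpha(\beta-1)]p-\alpha d}$ via the threshold identity $(\alpha+2\beta-3)p\le 2d$; this identity is exactly the paper's inequality \eqref{eq-lemma-condition-extra-p-r-eq-3}. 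Your explicit sub-split on the sign of $[\beta(\alpha-1)+\alpha(\beta-1)]p-\alpha d$ is in fact \emph{more} careful than the paper: in regime \eqref{condition-analise-p-r-2-def} that quantity can be negative (e.g. $\alpha=2$, $\beta=3/2$, $d=3$, $p=2.1$), in which case \eqref{condition-geral-2(f)} holds trivially while the paper's displayed inequality \eqref{eq-lemma-condition-extra-p-r-eq-3} must be read with that caveat.

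\textbf{Part 1} is where you genuinely diverge from the paper, and where there is a gap. The paper proves existence from regime \eqref{condition-analise-p-r-2-def} whenever $(\alpha,\beta)\neq(2,d)$ (its $p$-interval is nonempty exactly when $\alpha+\beta<d+2$) and uses regime \eqref{condition-analise-p-r-3-def} only in the degenerate case $(\alpha,\beta)=(2,d)$, where it simplifies; you instead work with \eqref{condition-analise-p-r-3-def} for all $(\alpha,\beta)$, which is a legitimate simplification since your termwise verification that its $p$-interval is nonempty is correct. The gap is in your claim that \emph{for such $p$} the lower bound $\max\{p,\,p/(p-1),\,d/(\alpha-1)\}$ is compatible with the upper bound on $r$. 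The check against $d/(\alpha-1)$ does hold for every $p$ in the interval (it reduces, as you say, to $p<\frac{\alpha d}{\alpha(\alpha-2)+\beta}$), but the ``analogous'' check against $p$ cross-multiplies to $p<\frac{2\beta d}{\beta(\alpha-1)+\alpha(\beta-1)}$, which is \emph{not} implied by $p<\frac{\alpha d}{\max\{2\beta(\alpha-1)-\alpha,\;\alpha(\alpha-2)+\beta\}}$. Concretely, for $\alpha=2$, $\beta=3/2$, $d=3$ the regime-\eqref{condition-analise-p-r-3-def} interval is $3<p<4$, yet at $p=3.9$ the admissible $r$-range is empty: the lower bound is $\max\{3.9,\,3.9/2.9,\,3\}=3.9$ while the upper bound equals $\frac{11.7}{3.75}=3.12$. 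So compatibility fails for part of the interval, and your argument as written does not close. The repair is easy and keeps your route: existence needs only one pair, and for $p$ sufficiently close to the left endpoint $\frac{2d}{(\alpha-1)+2(\beta-1)}$ the upper bound on $r$ tends to $\frac{pd}{d-p(\beta-1)}$, which strictly dominates all three lower bounds by the computations in Part 1 of \Cref{lemma-conditions-geral-global} (one may also check directly that $\frac{2d}{\alpha+2\beta-3}<\frac{2\beta d}{\beta(\alpha-1)+\alpha(\beta-1)}$, equivalently $\alpha<2\beta$); but the restriction to such $p$ must be stated, since the uniform claim is false.
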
 

\begin{remark}
Consider $p$ and $r$ satisfying \eqref{condition-analise-p-r-3-def}. Note that, since $\beta>\alpha(2-\alpha)$ for any $\alpha \in (1,2]$, $\max\left\{2\beta(\alpha-1)-\alpha,  \; \alpha(\alpha-2)+\beta \right\}>0$. 
Moreover,  as $\beta>\alpha/2$ implies $\frac{2d}{(\alpha-1)+2(\beta-1)}>\frac{\alpha d}{\beta(\alpha-1)+\alpha(\beta-1)}$, we have $p>\frac{\alpha d}{\beta(\alpha-1)+\alpha(\beta-1)}$, and hence $[\beta(\alpha-1)+\alpha(\beta-1)]p-\alpha d>0$.  
Therefore, 
\begin{equation*}
    \frac{\alpha d }{\max\left\{2\beta(\alpha-1)-\alpha,  \; \alpha(\alpha-2)+\beta \right\}}>0  \quad \text{ and } \quad \frac{(2\beta-\alpha)pd}{[\beta(\alpha-1)+\alpha(\beta-1)]p-\alpha d}>0.
\end{equation*}
\end{remark}

\begin{proof}
\hyperlink{Part 1-2}{\textbf{Part 1:}} 
Consider first the restriction given by \eqref{condition-analise-p-r-2-def}.  
For $(\alpha,\beta) \neq (2,d)$, we obtain 
\begin{align*}
     & \frac{2d}{(\alpha-1)+2(\beta-1)}>\frac{2d}{d+\beta-1}  & \text{ as } &  & \alpha+\beta&<d+2, \hspace{3cm} \\
    & \frac{2d}{(\alpha-1)+2(\beta-1)}>\frac{d}{\alpha+\beta-2}  & \text{ as } &  &  \alpha&>1.
\end{align*}
Moreover, note that 
\begin{equation}
    \label{eq-lemma-condition-extra-p-r-eq-1}
    \frac{2d}{(\alpha-1)+2(\beta-1)}<\frac{d}{\beta -1}
\end{equation}
as $\alpha>1$. Then, $p \leq \frac{2d}{(\alpha-1)+2(\beta-1)} $ is also less then $\frac{d}{\beta -1} $. Thus, $d-p(\beta-1)\geq 0$ and $\frac{pd}{d-p(\beta-1)}>\max\left\{\frac{p}{p-1},  \; \frac{d}{\alpha-1},  \; p\right\}$, as we have already shown in \Cref{lemma-conditions-geral-global}. 
Therefore, there exist numbers $p$ and $r$  in the ranges defined in \eqref{condition-analise-p-r-2-def} for $(\alpha,\beta) \neq (2,d)$. 

Now, consider the restriction given by \eqref{condition-analise-p-r-3-def} in the case $(\alpha,\beta) = (2,d)$. In this scenario, \eqref{condition-analise-p-r-3-def} becomes
\begin{equation}
    \label{condition-analise-p-r-3-def-2}
    \frac{2d}{2d-1} < p < \frac{2d }{2d-2} \quad \text{ and } 
     \quad   \frac{p}{p-1} \leq r \leq \frac{2(d-1)pd}{2(d-1)p+dp-2 d}, 
\end{equation}
since  $\frac{2d }{2d-2} \leq d < \frac{p}{p-1} < 2d$.
Notice that there exists number $r$ in the range defined by \eqref{condition-analise-p-r-3-def-2}, since  $\frac{2d}{2d-1}<p<2$ implies $-\frac{1}{2}<\frac{d}{2(d-1)}-\frac{2 d}{2(d-1)p}<0$, which leads to
\begin{equation*}
    \frac{p}{p-1} \leq \frac{2(d-1)pd}{2(d-1)p+dp-2 d}=\frac{d}{1+\left(\frac{d}{2(d-1)}-\frac{2 d}{2(d-1)p}\right)}.
\end{equation*}
Moreover, it is trivial to see that there exists a number $p$ in the range defined by \eqref{condition-analise-p-r-3-def-2}.
Thus, we obtain that there exist numbers $p$ and $r$ in the ranges defined by \eqref{condition-analise-p-r-3-def} for $(\alpha,\beta) = (2,d)$.

\hyperlink{Part 2-2}{\textbf{Part 2:}} To prove \eqref{condition-geral-2}, first we assume \eqref{condition-analise-p-r-2-def}.
In that case, note that $p$ also satisfies \eqref{condition-global-p-1} if $2\beta \left(\alpha-1\right)-\alpha\geq0$, since  from \eqref{eq-lemma-condition-extra-p-r-eq-1} and \eqref{eq-lemma-condition-extra-p-r-eq-0} it follows that 
\begin{equation*} 
    \frac{2d}{(\alpha-1)+2(\beta-1)}<\frac{\alpha d}{2\beta \left(\alpha-1\right)-\alpha}.
\end{equation*}

Moreover, from \eqref{eq-lemma-condition-extra-p-r-eq-1}, we see that $p$ and $r$ also satisfy \eqref{definition-p-q-appendix-A}. Therefore, from \Cref{lemma-conditions-geral-global}, \eqref{condition-geral-2} follows.

On the other hand, if we assume \eqref{condition-analise-p-r-3-def}, we have, for  $d-p(\beta-1)\geq0$, 
\begin{equation*}
    \frac{(2\beta-\alpha)pd}{[\beta(\alpha-1)+\alpha(\beta-1)]p-\alpha d}<\frac{pd}{d-p(\beta-1)},
\end{equation*}
since $p>\frac{2d}{(\alpha-1)+2(\beta-1)}$. Moreover, $p$ satisfies \eqref{condition-global-p-1} if $2\beta \left(\alpha-1\right)-\alpha\geq 0$, since in that case
\begin{equation*}
    \frac{\alpha d}{\max\left\{2\beta(\alpha-1)-\alpha,  \; \alpha(\alpha-2)+\beta \right\}} \leq  \min\left\{\frac{\alpha d}{2\beta(\alpha-1)-\alpha},  \; \frac{\alpha d}{\alpha(\alpha-2)+\beta }\right\}.
\end{equation*}
Then, again from \Cref{lemma-conditions-geral-global}, \eqref{condition-geral-2} follows.

To prove \eqref{condition-geral-2(f)}, note that this is equivalent to 
\begin{equation*}
    r\leq \frac{(2\beta-\alpha)pd}{[\beta(\alpha-1)+\alpha(\beta-1)]p-\alpha d}.
\end{equation*}
Then, assuming \eqref{condition-analise-p-r-2-def}, we have 
\begin{equation}
    \label{eq-lemma-condition-extra-p-r-eq-3}
    \frac{pd}{d-p(\beta-1)}\leq \frac{(2\beta-\alpha)pd}{[\beta(\alpha-1)+\alpha(\beta-1)]p-\alpha d},
\end{equation}
since $p\leq \frac{2d}{(\alpha-1)+2(\beta-1)} $, and \eqref{condition-geral-2(f)} follows. 
On the other hand, assuming \eqref{condition-analise-p-r-3-def}, \eqref{condition-geral-2(f)} follows directly from \eqref{eq-lemma-condition-extra-p-r-eq-3}. 
\end{proof}

\section{Fractional Laplacian}
\label{Fractional-Laplacian}

In this appendix, we present and prove essential properties of the Fractional Laplacian for completeness. 
We begin by introducing a definition of the nonlocal operator $(-\Delta)^{s}$ for any $s \in \mathbb{R}_{+}\setminus\mathbb{N}$. 
\begin{definition} \label{Definition-fractional-powers-Laplacian}
    Let $s=m+\sigma$, where $\sigma \in(0,1)$ and $m \in \mathbb{N}_0$. Then, the operator $(-\Delta)^{s}$ can be defined as \citep{Positive-powers-of-Laplacian,loss-of-maximum-principles,Fractional-integrals-and-derivatives} 
    \begin{equation}
        \label{eq-definition-fractional-Laplacian-integral-2}
        (-\Delta)^{s} u(x):=\frac{c_{d, 2s}}{2} P.V.\int_{\mathbb{R}^d} \frac{\delta_{m+1} u(x, y)}{|y|^{d+2s}} d y, \quad x \in \mathbb{R}^d,
    \end{equation} 
    where $P.V.$ stands for the Cauchy principal value, $c_{d, s}$ is the positive normalization constant  
    \begin{equation}
        \label{eq-definition-fractional-Laplacian-constant-2}
        c_{d, 2s}:=\frac{4^s \Gamma\left(\frac{d}{2}+s\right)}{\pi^{\frac{d}{2}}\Gamma\left(-s\right)}\left(\sum_{k=1}^{m+1} (-1)^k  \begin{pmatrix}
         2(m+1) \\
         m+1-k
        \end{pmatrix} k^{2s}\right)^{-1}, 
    \end{equation} 
    and $\delta_{m+1} u$ is the finite difference of order $2(m+1)$ of $u$
    \begin{equation}
        \label{eq-definition-fractional-Laplacian-integral-3}
        \delta_{m+1} u(x, y):=\sum_{k=-m-1}^{m+1}(-1)^k\left(\begin{array}{c}
        2(m+1) \\
        m+1-k
        \end{array}\right) u(x+k y) \quad \text { for } x, y \in \mathbb{R}^d.
    \end{equation}    
\end{definition}

Note that, by a change of variables, it is easy to see that  \eqref{eq-definition-fractional-Laplacian-integral-2} can be rewritten as 
\begin{equation}
    \label{eq-definition-fractional-Laplacian-integral}
    (-\Delta)^{s}  u(x)=c_{d, 2s} P.V. \int_{\mathbb{R}^d} \frac{u(x)-u(y)}{|x-y|^{d+2s}} \mathrm{d} y, \quad x \in \mathbb{R}^d,
\end{equation} 
where the normalization constant is given by
\begin{equation}
    \label{eq-definition-fractional-Laplacian-constant}
    c_{d, 2s}:=\frac{4^{s} s \Gamma\left(d/2+s\right)}{\pi^{d / 2} \left|\Gamma\left(1-s\right)\right|}=\frac{4^s \Gamma\left(d/2+s\right)}{\pi^{d / 2}\left|\Gamma\left(-s\right)\right|}. 
\end{equation}   

\begin{remark}
    In literature, the definition of the fractional Laplacian as $\Lambda^{2s}=(-\Delta)^{s}$ generally applies for $0<s<1$ and can be characterized in multiple equivalent ways (see \citep[]{What-is-the-fractional-Laplacian-2020, Ten-Equivalent-Definitions-Fractional-Laplace-2017}). 
    One such characterization is as the singular integral operator \eqref{eq-definition-fractional-Laplacian-integral} \cite{New-developments-nonlocal-operators-I,Guo-2015-Partial-Differential-Equations}.
\end{remark}

We start by showing that $\Lambda^{\alpha}=(-\Delta)^{\alpha/2}$ maps $W^{k,p}(\mathbb{R}^{d})$ to $L^p(\mathbb{R}^{d})$ for certain values of $p$ and $k$. 
For that purpose, consider the following lemma.

\begin{lemma}{\citep[Theorem 1.2]{Taylor-expansions}} \label{Lemma-Taylor-polynomial}
    Let $u \in W^{k, p}(\mathbb{R}^{d})$, for some $1\leq p \leq \infty$, and $k$ be a nonnegative integer. Then, the following estimate holds 
    \begin{equation}
        \label{Taylor-polynomial}
        \left\|u(\cdot+y)-P_ku(\cdot, y)\right\|_{L^p} \leq C \| u\|_{W^{k,p}}|y|^{k},
    \end{equation}
where $C>0$ depends only on $d$ and $k$, and $P_k u(x,\cdot)$ denotes the Taylor polynomial of order $k$, centered at $x$, of the function $u$, \ie using the standard multi-index notation,
\begin{equation*}
    P_k u(x, y):=\sum_{|\zeta | \leq k} \frac{D^\zeta u(x)}{\zeta  !} y^\zeta.
\end{equation*}
\end{lemma}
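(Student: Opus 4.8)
The plan is to prove the estimate \eqref{Taylor-polynomial} by representing $u(x+y)-P_k u(x,y)$ through the integral form of the Taylor remainder and then taking the $L^p$ norm in $x$ via Minkowski's integral inequality. The key observation is that, since only $k$ derivatives of $u$ are controlled, one should expand only to order $k-1$ with an integral remainder of order $k$ and absorb the missing top-order terms of $P_k u$ into that remainder; this is exactly what yields the power $|y|^k$ (rather than $|y|^{k+1}$), since the remainder is estimated crudely rather than through its vanishing as $y\to0$.

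First I would reduce to a smooth function. For $1\le p<\infty$ the space $C_c^\infty(\mathbb{R}^d)$ is dense in $W^{k,p}(\mathbb{R}^d)$, so it suffices to establish the identity and the bound for $u\in C_c^\infty(\mathbb{R}^d)$ and then pass to the limit along a sequence $u_n\to u$ in $W^{k,p}$, both sides of \eqref{Taylor-polynomial} being continuous in this convergence for fixed $y$. The case $p=\infty$ would be treated separately, working with the $C^{k-1,1}$ representative of $u$, for which the integral remainder formula below holds almost everywhere.

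The central step is the pointwise identity, valid for smooth $u$,
\[
u(x+y)-P_k u(x,y)=\sum_{|\zeta|=k}\frac{k}{\zeta!}\,y^\zeta\int_0^1 (1-t)^{k-1}\bigl[D^\zeta u(x+ty)-D^\zeta u(x)\bigr]\,dt,
\]
which I would derive by applying the one-dimensional Taylor formula with integral remainder to $g(t)=u(x+ty)$, expanded to order $k-1$, and then rewriting the degree-$k$ terms of $P_k u$ as integrals through the normalization $\int_0^1 k(1-t)^{k-1}\,dt=1$. Taking the $L^p$ norm in $x$ and applying Minkowski's integral inequality gives
\[
\left\|u(\cdot+y)-P_k u(\cdot,y)\right\|_{L^p}\le\sum_{|\zeta|=k}\frac{k}{\zeta!}\,|y^\zeta|\int_0^1 (1-t)^{k-1}\,\left\|D^\zeta u(\cdot+ty)-D^\zeta u(\cdot)\right\|_{L^p}\,dt.
\]
I would then bound $|y^\zeta|\le|y|^k$ for $|\zeta|=k$, use the elementary translation estimate $\|D^\zeta u(\cdot+ty)-D^\zeta u\|_{L^p}\le 2\|D^\zeta u\|_{L^p}\le 2\|u\|_{W^{k,p}}$, and evaluate $\int_0^1 k(1-t)^{k-1}\,dt=1$, obtaining \eqref{Taylor-polynomial} with $C=2\sum_{|\zeta|=k}\frac{1}{\zeta!}=2d^k/k!$, which depends only on $d$ and $k$.

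I expect the main obstacle to be the rigorous justification of the remainder identity at the $W^{k,p}$ regularity level rather than the estimate itself: the integral Taylor formula is a pointwise statement for smooth functions, so the density/mollification argument must be carried out with care, and in the endpoint $p=\infty$, where smooth functions are not dense, one must instead invoke the Lipschitz representative of $D^{k-1}u$. Once the identity is secured, the remaining steps are routine applications of Minkowski's inequality and the translation bound.
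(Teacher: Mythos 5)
Your proof is correct. Note, however, that the paper does not prove this lemma at all: it is quoted as Theorem 1.2 of the cited reference, so there is no internal proof to compare against; your argument stands as a self-contained replacement for that citation. The route you take is the standard one and every step checks out: expanding $g(t)=u(x+ty)$ to order $k-1$ with integral remainder, rewriting the top-order terms of $P_k u$ via the normalization $\int_0^1 k(1-t)^{k-1}\,dt=1$ so that the difference becomes $\sum_{|\zeta|=k}\frac{k}{\zeta!}\,y^\zeta\int_0^1(1-t)^{k-1}\bigl[D^\zeta u(x+ty)-D^\zeta u(x)\bigr]\,dt$, and then applying Minkowski's integral inequality, translation invariance of the $L^p$ norm, and $|y^\zeta|\le|y|^k$; the density reduction for $1\le p<\infty$ and the appeal to the $C^{k-1,1}$ representative for $p=\infty$ are exactly the right devices to justify the pointwise identity at $W^{k,p}$ regularity. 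Two minor points worth tightening: your remainder identity presupposes $k\ge 1$, so the case $k=0$, which the statement admits, should be dispatched separately by the trivial bound $\|u(\cdot+y)-u\|_{L^p}\le 2\|u\|_{L^p}$; and it would be worth stating explicitly that the constant you produce, $C=2\sum_{|\zeta|=k}1/\zeta!=2d^k/k!$, depends only on $d$ and $k$, which is precisely what the lemma requires and is in fact sharper information than the citation provides.
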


\begin{proposition} \label{Prop-Lambda-lp}
    Let $\alpha>0$. If $u \in W^{ 2\left \lceil \alpha/2 \right \rceil, p}(\mathbb{R}^{d})$, then $\Lambda^{\alpha} u \in L^p(\mathbb{R}^{d})$ for $1 < p < \infty$. Specifically, the following inequality holds
    \begin{equation*}
        \|\Lambda^{\alpha} u\|_{L^p} \leq C \|u\|_{W^{2\left \lceil \alpha/2 \right \rceil,p}},
    \end{equation*}
    where $C>0$ depends only on $d$, $\alpha$, and $p$. 
\end{proposition}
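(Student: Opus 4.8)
The plan is to argue directly from the hypersingular integral representation \eqref{eq-definition-fractional-Laplacian-integral-2} of $(-\Delta)^{s}$ with $s:=\alpha/2$, splitting according to whether $s$ is an integer. Write $k:=2\lceil \alpha/2\rceil$, the smallest even integer that is $\geq\alpha$. If $s=\ell\in\mathbb N$, then $\Lambda^{\alpha}=(-\Delta)^{\ell}$ is a classical differential operator of order $2\ell=\alpha=k$, so $\Lambda^{\alpha}u$ is a finite linear combination of the derivatives $D^{\zeta}u$ with $|\zeta|=2\ell$, and $\|\Lambda^{\alpha}u\|_{L^{p}}\leq C\|u\|_{W^{k,p}}$ is immediate. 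The substance is the non-integer case, where $s=m+\sigma$ with $m\in\mathbb N_{0}$ and $\sigma\in(0,1)$, so that $\lceil\alpha/2\rceil=m+1$ and the finite difference $\delta_{m+1}$ in \eqref{eq-definition-fractional-Laplacian-integral-2} has order $N:=2(m+1)=k$.

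The key step is a pointwise-in-$y$ bound for $\|\delta_{m+1}u(\cdot,y)\|_{L^{p}}$. First I would record the moment-cancellation property of the centered difference: writing $\delta_{m+1}u(x,y)=\sum_{j=-m-1}^{m+1}(-1)^{j}\binom{2(m+1)}{m+1-j}u(x+jy)=:\sum_j c_j\,u(x+jy)$ and substituting $i=m+1-j$, the coefficients satisfy $\sum_j c_j\, j^{l}=0$ for every $l=0,1,\dots,k-1$, because the $N$-th order finite difference annihilates polynomials of degree $<N$. I then combine this with the Taylor estimate of \Cref{Lemma-Taylor-polynomial}: inserting $u(x+jy)=P_k u(x,jy)+g(x,jy)$, where $\|g(\cdot,jy)\|_{L^{p}}\leq C\|u\|_{W^{k,p}}|jy|^{k}$, the moment conditions kill all terms of $P_k u$ of degree $\leq k-1$, leaving only the homogeneous degree-$k$ part, whose $L^{p}$ norm is likewise $O(|y|^{k})$. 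Hence $\|\delta_{m+1}u(\cdot,y)\|_{L^{p}}\leq C\|u\|_{W^{k,p}}|y|^{k}$ for $|y|\leq 1$, while for $|y|\geq 1$ the trivial bound $\|\delta_{m+1}u(\cdot,y)\|_{L^{p}}\leq C\|u\|_{L^{p}}$ (translation invariance of the $L^{p}$ norm over finitely many terms) suffices.

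With these bounds I would apply Minkowski's integral inequality to \eqref{eq-definition-fractional-Laplacian-integral-2} and split the $y$-integral at $|y|=1$:
\begin{equation*}
\begin{aligned}
\|\Lambda^{\alpha}u\|_{L^{p}}
&\leq \frac{c_{d,2s}}{2}\int_{\mathbb R^{d}}\frac{\|\delta_{m+1}u(\cdot,y)\|_{L^{p}}}{|y|^{d+2s}}\,dy\\
&\leq C\|u\|_{W^{k,p}}\int_{|y|\leq 1}|y|^{k-d-2s}\,dy+C\|u\|_{L^{p}}\int_{|y|\geq 1}|y|^{-d-2s}\,dy.
\end{aligned}
\end{equation*}
The first integral converges precisely because $k-2s=2(m+1)-2(m+\sigma)=2(1-\sigma)>0$, which is exactly where the choice $k=2\lceil\alpha/2\rceil$ is used; the second converges because $2s=\alpha>0$. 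This also shows the integrand is absolutely integrable, so the principal value in \eqref{eq-definition-fractional-Laplacian-integral-2} reduces to an ordinary Lebesgue integral and the use of Minkowski's inequality is legitimate for the stated range $1<p<\infty$. Summing the two cases yields $\|\Lambda^{\alpha}u\|_{L^{p}}\leq C\|u\|_{W^{k,p}}$ with $C=C(d,\alpha,p)$.

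The main obstacle is the second paragraph: correctly pairing the order-$N$ finite difference with the Taylor polynomial of \Cref{Lemma-Taylor-polynomial} so that every contribution of order $\leq k-1$ cancels and the surviving order-$k$ remainder decays like $|y|^{k}$. This sharp decay is what guarantees the borderline exponent $k-2s=2(1-\sigma)>0$, hence integrability at the origin; a cruder estimate would fail to tame the singularity. By comparison, the integer case and the convergence at infinity are routine.
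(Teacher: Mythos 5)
Your proof is correct, and at its core it follows the same strategy as the paper's: split the singular integral at $|y|=1$; on the near region use \Cref{Lemma-Taylor-polynomial} together with Minkowski's integral inequality to get $O(|y|^{k})$ decay of the $L^{p}$ norm of the finite difference, where $k=2\lceil\alpha/2\rceil>\alpha$ makes $|y|^{k-d-\alpha}$ integrable at the origin; on the far region use the trivial $L^{p}$ bound and the integrability of $|y|^{-d-\alpha}$ at infinity. The genuine difference lies in the starting representation and in how the cancellation is organized. The paper begins with the first-order difference form \eqref{eq-definition-fractional-Laplacian-integral} and then, inside $|y|<1$, replaces the simple difference $u(y)-u(x)$ by the higher-order difference $\delta_{k/2}u$, writing $I_1$ directly as a constant times the truncated integral of $\delta_{k/2}u$; this identification of the two \emph{truncated} integrals is the weakest step of the paper's write-up, since the equivalence of \eqref{eq-definition-fractional-Laplacian-integral-2} and \eqref{eq-definition-fractional-Laplacian-integral} concerns the full principal-value integrals, and for $\alpha>2$ the first-order form is itself delicate. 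You avoid this entirely by working with \eqref{eq-definition-fractional-Laplacian-integral-2} from start to finish, and you make the cancellation mechanism explicit through the moment conditions $\sum_j c_j\,j^{l}=0$ for $0\le l\le k-1$, so that only the homogeneous degree-$k$ part of the Taylor polynomial and the Taylor remainders survive, each of size $O(|y|^{k})$ in $L^{p}$. This buys two things the paper's argument lacks: the proof never switches representations midstream, and the absolute convergence you establish justifies dropping the principal value and applying Minkowski's inequality. In short, your route is a cleaner and more careful implementation of the same underlying idea.
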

\begin{proof}
    The cases where $\alpha/2 \in \mathbb{N}$ can be checked directly, as $\Lambda^{\alpha}$ is a local differential operator, representing a power of the standard Laplacian. 
    For $\alpha/2 \in \mathbb{R}_{+}\setminus\mathbb{N}$, from \eqref{eq-definition-fractional-Laplacian-integral}, we write
    \begin{equation*}
        \begin{split}
            \left\| \int_{\mathbb{R}^d} \frac{u(y)-u(x)}{|x-y|^{d+\alpha}} \mathrm{~d} y\right\|_{L^{p}} &= \left\| \int_{|x-y| < 1} \frac{u(y)-u(x)}{|x-y|^{d+\alpha}} \mathrm{~d} y\right\|_{L^{p}} \\&\hspace{3cm} +\left\| \int_{|x-y| \geq 1} \frac{u(y)-u(x)}{|x-y|^{d+\alpha}} \mathrm{~d} y\right\|_{L^{p}} \\
            &= I_1+I_2.
        \end{split}        
    \end{equation*}
    
    Set $k=2\left \lceil \alpha/2 \right \rceil$. Since by hypothesis $u \in W^{k, p}(\mathbb{R}^{d})$, to compute $I_1$, we use the Taylor expansion centered at $x$, \ie $u(x+y)=P_k u(x, y)+r(x,y)$, where $P_k u(x, y)$ is defined in \Cref{Lemma-Taylor-polynomial}. 
    From Minkowski's inequality for integrals, since $\chi_{[-1,1]}(y)\left(\frac{u(\cdot+y)-P_k(\cdot, y)}{|y|^{d+\alpha}}\right)$ $  \in L^{ p}(\mathbb{R}^{d})$ for a.e. $y$, and $\chi_{[-1,1]}(y) \left\|\frac{u(\cdot+y)-P_k(\cdot, y)}{|y|^{d+\alpha}}\right\|_{L^p}  \in L^{ 1}(\mathbb{R}^{d})$, 
    we have 
    \begin{equation*}
        \begin{split}
            \left\| \int_{|y| < 1} \frac{r(x,y)}{|y|^{d+\alpha}} \mathrm{~d} y \right\|_{L^p} & \leq C \int_{|y| < 1}  \frac{\left\|u(\cdot+y)-P_k(\cdot, y)\right\|_{L^p}}{|y|^{d+\alpha}} \mathrm{~d} y \\
            & \leq C \int_{|y| < 1}  \frac{\| u\|_{W^{k,p}}|y|^{k}}{|y|^{d+\alpha}} \mathrm{~d} y \\
            & \leq  \left(\int_{|y| < 1}  \frac{1}{|y|^{d+\alpha-k}} \mathrm{~d} y\right) \| u\|_{W^{k,p}} \\
            & \leq  C \| u\|_{W^{k,p}},  
        \end{split}
    \end{equation*}
    where we used \eqref{Taylor-polynomial} and the fact that $d+\alpha-k<d$.   
    Next, note that the finite difference of order $k$ of $u$ can be expressed as 
    \begin{equation*}
        \delta_{k/2} u(x, y)=-\sum_{|\zeta|=k} y^{\zeta} D^\zeta u +\sum_{l=-k/2}^{k/2}(-1)^l\left(\begin{array}{c}
        k \\
        k/2-l
        \end{array}\right) r(x+l y) \quad \text { for } x, y \in \mathbb{R}^d.
    \end{equation*}   
    Therefore, from the above, we obtain 
    \begin{equation*}
        \begin{split}
             I_1
             & = C\left\| \int_{|y| < 1} \frac{\delta_{k/2} u(x, y) }{|y|^{d+\alpha}} \mathrm{~d} y \right\|_{L^{p}} \\
             & \leq C\left\| \int_{|y| < 1} \sum_{|\zeta|=k} \frac{y^{\zeta} D^\zeta u  }{|y|^{d+\alpha}} +\sum_{l=-k/2}^{k/2}(-1)^l\left(\begin{array}{c}
            k \\
            k/2-l
            \end{array}\right) \frac{r(x+l y)  }{|y|^{d+\alpha}} \mathrm{~d} y \right\|_{L^{p}}  \\
            &\leq  C \left\|  D^k u\right\|_{L^p}\left( \int_{|y| < 1} \frac{1 }{|y|^{d+\alpha-k}} \mathrm{~d} y \right) +  C \| u\|_{w^{k,p}}  \\
            &\leq C \| u\|_{w^{k,p}},
        \end{split}
    \end{equation*}
    where $\left|D^k u\right|=\left(\sum_{|\zeta|=k}\left|D^\zeta u\right|^2\right)^{1 / 2}$, and we used the fact that $1 /|y|^{d+\alpha-k}$ is integrable as $k-\alpha>0$.

    Now applying again Minkowski's inequality, we obtain 
     \begin{equation*}
        \begin{split}
            I_2
            &=\left\| \int_{|z| \geq 1} \frac{u(x+z)-u(x)}{|z|^{d+\alpha}} \mathrm{~d}z\right\|_{L^{p}} \\
            & \leq \int_{|z| \geq 1} \frac{\left\|u(\cdot+z)-u(\cdot)\right\|_{L^{p}}}{|z|^{d+\alpha}} \mathrm{~d} z \\
           & \leq 2 \left(\int_{|z| \geq  1} \frac{1}{|z|^{d+\alpha}} \mathrm{~d} z\right) \left\|u\right\|_{L^{p}} \\
            & \leq C \left\|  u\right\|_{L^p},
        \end{split}
    \end{equation*}
    where we used the fact that $1 /|z|^{d+\alpha}$ is integrable, as $d+\alpha>d$ and $|z|\geq 1$. 
\end{proof}

Now consider the following definition:
\begin{definition} \label{definition-f-f+}
    For a real Lebesgue-measurable function $u$ in $\mathbb{R}^d, d \in \mathbb{N}$, we define $u \mapsto u_{+}$ and $u \mapsto u_{-}$, bounded linear operators in $L^{p}(\mathbb{R}^d)$ spaces, with $1\leq p \leq \infty$, as
\begin{equation}
    u_{+}(x)=\max \{ u(x), 0\}, \quad \text{ and } \quad  u_{-}(x)=\max \{-u(x), 0\},\quad \mbox{ a.e. } x \in \mathbb{R}^d.
\end{equation}
Thus, $u(x)=u_{+}(x)-u_{-}(x)$ a.e. $x \in \mathbb R^d$. 
\end{definition}

For $u \in W^{1,p}(\mathbb{R}^d)$, the weak derivatives of $\partial_i u_{+}$ and $\partial_i u_{-}$ are given by 
\begin{equation}\label{derivpm}
    \partial_i u_{+}(x)= \begin{cases}
        \partial_i u(x), & \text{ if } u(x)>0 \\
        0, & \text{ if } u(x) \leq 0
        \end{cases} \;\text{ and } \; \partial_i u_{-}(x)= \begin{cases}
       - \partial_i u(x),& \text{ if } u(x)\leq 0 \\
        0, & \text{ if }  u(x) >  0.
        \end{cases}
\end{equation}

Then, we can prove for $u$, $u_-$ and $u_+$ the following: 
\begin{lemma} \label{nonnegative-2}
    Let $\alpha \geq 0$ and $u \in \mathcal{S}(\mathbb{R}^d)$. Then,
    \begin{equation}
        \label{part-3}
       - u_{-} \Lambda^{\alpha} u \geq u_{-} \Lambda^{\alpha} u_{-} \quad \text{ and } \quad u_{+} \Lambda^{\alpha} u \geq u_{+} \Lambda^{\alpha} u_{+}.
    \end{equation}
\end{lemma}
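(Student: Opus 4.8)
The plan is to reduce both inequalities to a single pointwise claim by using the decomposition $u=u_{+}-u_{-}$, the relation $u_{+}u_{-}=0$ (valid a.e.), and the linearity $\Lambda^{\alpha}u=\Lambda^{\alpha}u_{+}-\Lambda^{\alpha}u_{-}$. Writing $-u_{-}\Lambda^{\alpha}u=-u_{-}\Lambda^{\alpha}u_{+}+u_{-}\Lambda^{\alpha}u_{-}$, the first inequality in \eqref{part-3} is seen to be equivalent to the pointwise statement $u_{-}\,\Lambda^{\alpha}u_{+}\le 0$; symmetrically, the second is equivalent to $u_{+}\,\Lambda^{\alpha}u_{-}\le 0$. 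At a point $x$ where $u_{-}(x)=0$ the first product vanishes, so the only nontrivial case is $x\in\{u<0\}$, where $u_{-}(x)>0$ and it suffices to show $\Lambda^{\alpha}u_{+}(x)\le 0$. Likewise the second reduces to showing $\Lambda^{\alpha}u_{-}(x)\le 0$ at points of $\{u>0\}$. Everything thus comes down to: \emph{if $v\ge 0$ vanishes at $x$, then $\Lambda^{\alpha}v(x)\le 0$}, applied to $v=u_{+}$ and $v=u_{-}$.

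The clean point is that, since $u\in\mathcal{S}(\mathbb{R}^d)$ is continuous, $\{u<0\}$ is open, so at any $x$ with $u(x)<0$ there is a ball $B(x,\rho)$ on which $u<0$, hence $u_{+}\equiv 0$ on $B(x,\rho)$; symmetrically $u_{-}\equiv 0$ near any point of $\{u>0\}$. I would then invoke the singular-integral representation \eqref{eq-definition-fractional-Laplacian-integral} for $\alpha\in(0,2)$. Because $u_{+}$ vanishes identically on $B(x,\rho)$, the integrand $(u_{+}(x)-u_{+}(y))/|x-y|^{d+\alpha}$ vanishes for $y$ near $x$, so the principal value is in fact an absolutely convergent ordinary integral and
\[
\Lambda^{\alpha}u_{+}(x)=c_{d,\alpha}\int_{\mathbb{R}^d}\frac{u_{+}(x)-u_{+}(y)}{|x-y|^{d+\alpha}}\,\mathrm{d}y=-c_{d,\alpha}\int_{\mathbb{R}^d}\frac{u_{+}(y)}{|x-y|^{d+\alpha}}\,\mathrm{d}y\le 0,
\]
using $u_{+}(x)=0$, $u_{+}\ge 0$, and the positivity $c_{d,\alpha}>0$ of the normalization constant recorded in \eqref{eq-definition-fractional-Laplacian-constant}. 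The same computation with $u_{-}$ gives $\Lambda^{\alpha}u_{-}(x)\le 0$ on $\{u>0\}$. The endpoints are immediate: for $\alpha=0$ the operator is the identity and $u_{+}u_{-}=0$ yields equality in both lines of \eqref{part-3}; for $\alpha=2$, $\Lambda^{2}=-\Delta$ is local, and $u_{+}\equiv 0$ on $B(x,\rho)$ forces $\Delta u_{+}(x)=0$, so $\Lambda^{2}u_{+}(x)=0\le 0$ (and symmetrically for $u_{-}$).

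The step I expect to be the genuine obstacle is evaluating $\Lambda^{\alpha}v(x)$ when $v=u_{\pm}$ is only Lipschitz, with a gradient jump across $\{u=0\}$: the integral representation is classically justified for smooth functions, and at a kink the principal value need not collapse to an absolutely convergent integral. The openness of $\{u<0\}$ is exactly what removes this difficulty, since it makes $v$ vanish on a full ball around $x$ and annihilates the kernel singularity at $y=x$. I would emphasize that the mechanism driving the argument is the positivity of the kernel, i.e.\ the fractional maximum principle, which is what restricts the clean pointwise conclusion to $\alpha\in[0,2]$; for $\alpha>2$ one must instead work from the higher-order finite-difference representation \eqref{eq-definition-fractional-Laplacian-integral-2}, where the relevant difference still vanishes near $x$ but the global sign of the resulting integral is considerably more delicate, and this is where the main care is required.
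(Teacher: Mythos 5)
Your argument for $\alpha\in[0,2]$ is correct and is, at its core, the same computation as the paper's: the paper expands $u=u_{+}-u_{-}$ inside the singular-integral representation \eqref{eq-definition-fractional-Laplacian-integral}, uses $u_{-}(x)u_{+}(x)=0$, and discards the nonnegative cross term $c_{d,\alpha}\int_{\mathbb{R}^d} u_{-}(x)u_{+}(y)\,|x-y|^{-d-\alpha}\,\mathrm{d}y$, which is precisely your quantity $-u_{-}(x)\Lambda^{\alpha}u_{+}(x)$. Your write-up is in fact a bit more careful than the paper's on one point: by isolating the cross term through linearity and observing that $u_{+}$ vanishes on a whole ball around any point of $\{u<0\}$, you make explicit why that term is an absolutely convergent integral with a sign, something the paper passes over silently inside the principal value.

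The genuine gap is the range of $\alpha$. The lemma is asserted for all $\alpha\geq 0$, and the paper actually needs exponents above $2$: \eqref{part-3} feeds into \eqref{part-4} of \Cref{Positivity-Lemma-2}, which in \Cref{Thm:positivity-local-in-time} is applied to $\Lambda^{\beta}$ with $\beta\in(1,d]$, so for $d\geq 3$ the case $\alpha\in(2,d]$ occurs. You explicitly leave $\alpha\in(2,\infty)\setminus 2\mathbb{N}$ unproved, whereas the paper covers it by invoking its claim (stated after \Cref{Definition-fractional-powers-Laplacian}) that the higher-order finite-difference definition \eqref{eq-definition-fractional-Laplacian-integral-2} can be rewritten as the same pointwise formula \eqref{eq-definition-fractional-Laplacian-integral} with the positive constant \eqref{eq-definition-fractional-Laplacian-constant} for every non-integer order, and then running the identical computation; within that framework your reduction and sign argument go through verbatim for all $\alpha$, since your cross-term integral still vanishes near the singularity and still has a definite sign. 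Your suspicion that the pointwise singular-integral formula is problematic for $\alpha>2$ is mathematically well founded (for generic smooth $u$ the principal value diverges once $\alpha\geq 2$, which is exactly why \eqref{eq-definition-fractional-Laplacian-integral-2} uses higher-order differences), but as written your proposal proves strictly less than the statement claims, and the missing range is one the paper relies on.
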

\begin{proof}
The cases where $\alpha/2 \in \mathbb{N}$ can be verified directly, as $\Lambda^{\alpha}$ acts as a local differential operator, specifically a power of the Laplacian. 
For $\alpha/2 \in \mathbb{R}_{+}\setminus\mathbb{N}$, from \eqref{eq-definition-fractional-Laplacian-integral}, we obtain 
\begin{equation*}
    \begin{split}
        - u_{-} \Lambda^{\alpha} u(x) & = c_{d, \alpha} P . V . \int _{\mathbb{R}^d} \frac{- u_{-}(x)u(x) -(- u_{-}(x))u(y)} {|x-y|^{d+\alpha}} \mathrm{~d} y \\
        &=c_{d, \alpha} P . V . \int _{\mathbb{R}^d} \frac{( u_{-}(x))^2+ u_{-}(x)\left(u_{+}(y) - u_{-}(y) \right)}{|x-y|^{d+\alpha}} \mathrm{~d} y \\
        & \geq c_{d, \alpha} P . V . \int _{\mathbb{R}^d} \frac{(u_{-}(x))^2-u_{-}(x)u_{-}(y) }{|x-y|^{d+\alpha}} \mathrm{~d} y \\
        &=u_{-} \Lambda^\alpha u_{-}(x).
    \end{split}
\end{equation*}
where we used that $u_{-}(x)u_{+}(x)=0$ and $u_{-}(x)u_{+}(y) \geq 0$. 
\end{proof}

\begin{lemma} \label{Positivity-Lemma-2}
    Let $\alpha\geq 0$, and suppose that $u \in W^{2\left \lceil \alpha/2 \right \rceil,p}(\mathbb{R}^d)$, with $1<p<\infty$ . Then, 
    \begin{equation}
        \label{Positivity-Lemma}
        \int_{\mathbb{R}^d}|u|^{p-2} u \Lambda^\alpha u \mathrm{~d} x \geq 0.
    \end{equation}
    and
    \begin{equation}
        \label{part-4}
\int_{\mathbb{R}^d} \pm  u_{ \pm }^{p-1} \Lambda^\alpha u \mathrm{~d} x \geq 0. 
    \end{equation}
\end{lemma}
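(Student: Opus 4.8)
The plan is to reduce both inequalities to the nonnegativity of a symmetric bilinear form, exploiting the monotonicity of the nonlinearities $t \mapsto |t|^{p-2}t$ and $t \mapsto \pm (t_{\pm})^{p-1}$. As in the proof of \Cref{nonnegative-2}, I would first dispose of the local case $\alpha/2 \in \mathbb{N}$ directly: there $\Lambda^{\alpha}$ is a power of $-\Delta$, and an integration by parts turns \eqref{Positivity-Lemma} and \eqref{part-4} into integrals of manifestly nonnegative integrands (for instance, for $\alpha = 2$ one gets $\int_{\mathbb{R}^d} |u|^{p-2}u\,(-\Delta u)\,\mathrm{d}x = (p-1)\int_{\mathbb{R}^d} |u|^{p-2}|\nabla u|^2\,\mathrm{d}x \geq 0$, and similarly for the $u_{\pm}$ versions). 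The substantive case is $\alpha/2 \in \mathbb{R}_{+}\setminus\mathbb{N}$, where I would argue from the singular-integral representation \eqref{eq-definition-fractional-Laplacian-integral}.

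The central step is the symmetrization identity: for sufficiently regular $f$ and $g$,
\[
\int_{\mathbb{R}^d} f\,\Lambda^{\alpha} g\,\mathrm{d}x = \frac{c_{d,\alpha}}{2}\iint_{\mathbb{R}^d\times\mathbb{R}^d} \frac{\bigl(f(x)-f(y)\bigr)\bigl(g(x)-g(y)\bigr)}{|x-y|^{d+\alpha}}\,\mathrm{d}x\,\mathrm{d}y,
\]
which follows from \eqref{eq-definition-fractional-Laplacian-integral} by interchanging $x \leftrightarrow y$ and averaging. For \eqref{part-4} I would set $g = u$ and $f = \pm (u_{\pm})^{p-1}$: because $t \mapsto (t_{+})^{p-1}$ and $t \mapsto -(t_{-})^{p-1}$ are each nondecreasing (compositions of the monotone maps $t \mapsto t_{\pm}$ with $s \mapsto s^{p-1}$, cf.\ \Cref{definition-f-f+}), the product $\bigl(f(x)-f(y)\bigr)\bigl(u(x)-u(y)\bigr)$ is pointwise nonnegative, whence the double integral is $\geq 0$. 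Inequality \eqref{Positivity-Lemma} then follows either by the same argument with $f = |u|^{p-2}u$, using that $t \mapsto |t|^{p-2}t$ is increasing, or simply by summing the two cases of \eqref{part-4}, since $|u|^{p-2}u = (u_{+})^{p-1} - (u_{-})^{p-1}$.

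The main obstacle is the rigor of the non-local case: justifying the symmetrization identity and the convergence of the resulting double integral. Near the diagonal the numerator is $O(|x-y|^2)$ for smooth arguments, so the integrand is $O(|x-y|^{2-d-\alpha})$, integrable precisely when $\alpha < 2$; the principal value must be tracked carefully, and for $\alpha > 2$ one would instead work from the higher-order-difference representation \eqref{eq-definition-fractional-Laplacian-integral-2}, which demands more bookkeeping. To reach the stated regularity class $u \in W^{2\lceil \alpha/2 \rceil,p}(\mathbb{R}^d)$ rather than Schwartz data, I would prove the inequalities first for $u \in \mathcal{S}(\mathbb{R}^d)$ and then pass to the limit by density, invoking \Cref{Prop-Lambda-lp} for the continuity of $\Lambda^{\alpha}\colon W^{2\lceil \alpha/2 \rceil,p}(\mathbb{R}^d) \to L^p(\mathbb{R}^d)$ together with the continuity of the Nemytskii maps $u \mapsto |u|^{p-2}u$ and $u \mapsto \pm (u_{\pm})^{p-1}$ from $L^p(\mathbb{R}^d)$ into $L^{p'}(\mathbb{R}^d)$, $p' = p/(p-1)$. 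Hölder's inequality then ensures that each integral in \eqref{Positivity-Lemma} and \eqref{part-4} is finite and stable under this limit, closing the argument.
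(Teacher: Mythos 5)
Your argument is correct on the range $\alpha\in(0,2]$, and there it is in substance the same mechanism as the paper's, just made self-contained: the paper proves \eqref{Positivity-Lemma} by citing \citet[Lemma 2.5]{A-maximum-principle-Cordoba}, whose proof is exactly the positive-kernel monotonicity argument your symmetrization identity encodes, and it then obtains \eqref{part-4} not from monotonicity of $t\mapsto\pm(t_{\pm})^{p-1}$ but from the pointwise inequalities \eqref{part-3} of \Cref{nonnegative-2} combined with \eqref{Positivity-Lemma} applied to $u_{\pm}$ (noting $|u_{\pm}|^{p-2}u_{\pm}=u_{\pm}^{p-1}$). Your reversal --- prove \eqref{part-4} first by symmetrization and monotonicity, then recover \eqref{Positivity-Lemma} from $|u|^{p-2}u=(u_{+})^{p-1}-(u_{-})^{p-1}$ --- is a clean, legitimate alternative on that range, and the density step via \Cref{Prop-Lambda-lp} and Nemytskii continuity is unproblematic there.

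The genuine gap is the case $\alpha>2$, which the lemma explicitly covers (the hypothesis is $\alpha\geq 0$, $u\in W^{2\lceil \alpha/2\rceil,p}$, and the paper actually needs it: in \Cref{Thm:positivity-local-in-time} the lemma is invoked for $\Lambda^{\beta}$ with $\beta\in(1,d]$, $d\geq 2$). Your plan fails there at two separate points, and neither is ``bookkeeping''. First, for non-integer $\alpha/2>1$ the symmetrized double integral does not exist: even for Schwartz $f,g$ the numerator $\bigl(f(x)-f(y)\bigr)\bigl(g(x)-g(y)\bigr)$ is only $O(|x-y|^{2})$, so the integrand is of size $|x-y|^{2-d-\alpha}$ near the diagonal, which is not locally integrable once $\alpha>2$; the swap-and-average derivation therefore cannot be justified, and the pointwise-sign conclusion becomes vacuous. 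Passing to the higher-order-difference representation \eqref{eq-definition-fractional-Laplacian-integral-2} does not repair this: $\delta_{m+1}u(x,y)$ couples $2m+3$ values of $u$, has no two-point product structure, and monotonicity of $\phi$ yields no sign for the resulting bilinear form. Second, your treatment of the local case is also wrong beyond $\alpha=2$: already for $\alpha=4$ and $p>2$, two integrations by parts give
\begin{equation*}
\int_{\mathbb{R}^d}|u|^{p-2}u\,\Delta^{2}u\,\mathrm{d}x=(p-1)\int_{\mathbb{R}^d}|u|^{p-2}(\Delta u)^{2}\,\mathrm{d}x+(p-1)(p-2)\int_{\mathbb{R}^d}|u|^{p-4}u\,|\nabla u|^{2}\,\Delta u\,\mathrm{d}x,
\end{equation*}
and the second term is sign-indefinite, so nothing is ``manifestly nonnegative''; indeed, failure of $L^{p}$-dissipativity for $p\neq 2$ is a known feature of higher-order operators, so no rearrangement of the integration by parts can produce a nonnegative integrand. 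The paper deals with $\alpha>2$ differently: it asserts the equivalence of \eqref{eq-definition-fractional-Laplacian-integral-2} with the principal-value form \eqref{eq-definition-fractional-Laplacian-integral} and claims the cited proof then extends verbatim. Whatever one thinks of that step, it is precisely the ingredient your proposal lacks: as written, your proof establishes the lemma only for $\alpha\leq 2$.
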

\begin{proof}
    The proof of \eqref{Positivity-Lemma} with $\alpha \in [0,2]$ is available in \citet[Lemma 2.5]{A-maximum-principle-Cordoba} for $x \in \mathbb{R}^2$ or $\mathbb{T}^2$. Interestingly, the same proof extends seamlessly to $x \in \mathbb{R}^d$ and $\alpha>2$. 
    This extension holds, particularly for $\alpha>2$, 
    owing to the equivalence between the fractional Laplacian expressions \eqref{eq-definition-fractional-Laplacian-integral-2} and \eqref{eq-definition-fractional-Laplacian-integral}. 
    Note that \eqref{part-4} follows from \eqref{part-3} and \eqref{Positivity-Lemma}. Indeed,  
    \begin{align*}
       \int_{\mathbb{R}^d} \pm  u_{\pm}^{p-1} \Lambda^\alpha u \mathrm{~d} x  \geq \int_{\mathbb{R}^d} u_{\pm}^{p-1}  \Lambda^\alpha u_{\pm} \mathrm{~d} x \geq 0. 
    \end{align*}    
\end{proof}
\section*{Acknowledgments}
The first author received support under the grant 2019/02512-5, S\~ao Paulo Research Foundation (FAPESP). 
The second author was supported by Coordenação de Aperfeiçoamento de Pessoal de Nível Superior - Brasil (CAPES) - Finance Code 001, S\~ao Paulo Research Foundation (FAPESP), Brasil -- Process 2019/02512-5, and by  FAEPEX/FUNCAMP - 519.292.
%

\printbibliography
\end{document}